\documentclass[11pt]{amsart}
\usepackage[utf8]{inputenc}
\usepackage{bm}
\usepackage{fontenc}
\usepackage{amsfonts}
\usepackage{amssymb}
\usepackage{amsmath}
\usepackage{amsthm}\usepackage{mathtools}
\usepackage{enumerate}
\usepackage{enumitem}
\usepackage[pagebackref,colorlinks,linkcolor=blue,citecolor=blue,urlcolor=blue,hypertexnames=true]{hyperref}
\usepackage{mathrsfs}
\usepackage{tikz}
\usetikzlibrary{calc}
\usepackage{marginnote}
\usepackage{xcolor}
\usepackage{soul}
\usepackage[all,cmtip]{xy} 
\usepackage{caption}

\newcommand{\R}{\mathbb{R}}
\newcommand{\Z}{\mathbb{Z}}

\newcommand{\N}{\mathbb{N}}
\newcommand{\C}{\mathbb{C}}
\newcommand{\B}{\mathcal{B}}
\newcommand{\cB}{\mathcal{B}}

\newcommand{\cE}{\mathcal{E}}

\newcommand{\cF}{\mathcal{F}}

\newcommand{\eps}{\varepsilon}

\newcommand{\SOTh}{\mathrm{SOT}\text{-}}

\newcommand{\Lip}{\mathrm{Lip}}
\newcommand{\cstu}{\mathrm{C}^*_u}
\newcommand{\cstql}{\mathrm{C}^*_{\textit{ql}}}

\newcommand{\cstar}{$\mathrm{C}^*$}

\newcommand{\CP}{\mathrm{CP}}
\newcommand{\AP}{\mathrm{AP}}

\numberwithin{equation}{section}

\newtheorem{theorem}{Theorem}[section]
\newtheorem*{theorem*}{Theorem}
\newtheorem{theoremi}{Theorem}
\newtheorem{proposition}[theorem]{Proposition}

\newtheorem*{proposition*}{Proposition}
\newtheorem{lemma}[theorem]{Lemma}
\newtheorem*{lemma*}{Lemma}
\newtheorem{corollary}[theorem]{Corollary}
\newtheorem*{corollary*}{Corollary}

\newtheorem*{fact*}{Fact}
\theoremstyle{definition}
\newtheorem{definition}[theorem]{Definition}
\newtheorem*{definition*}{Definition}

\newtheorem*{acknowledgments}{Acknowledgments}
\newtheorem*{AIusage}{AI usage statement}
\newtheorem*{leanformalization}{Lean formalization}
\newtheorem{claim}[theorem]{Claim}
\newtheorem*{claim*}{Claim}

\newtheorem*{conjecture*}{Conjecture}

\newtheorem{assumption}[theorem]{Assumption}

\theoremstyle{remark}
\newtheorem{example}[theorem]{Example}
\newtheorem*{example*}{Example}
\newtheorem{remark}[theorem]{Remark}
\newtheorem*{remark*}{Remark}
\newtheorem*{question*}{Question}

\DeclareMathOperator{\dom}{dom}
\DeclareMathOperator{\supp}{supp}

\DeclareMathOperator{\propg}{prop}

\DeclareMathOperator{\rank}{rank}
\DeclareMathOperator{\ad}{ad}

\begin{document}

\title[Dynamical \cstar-algebras and coarse geometry]{Dynamical \cstar-algebras and coarse geometry}%

\date{\today} 

\author[B. M. Braga]{Bruno M. Braga}
\address[B. M. Braga]{IMPA, Estrada Dona Castorina 110, 22460-320, Rio de Janeiro, Brazil}
\email{demendoncabraga@gmail.com}
\urladdr{https://sites.google.com/site/demendoncabraga}
\thanks {B. M. Braga  was partially supported by FAPERJ, grant E-26/204.317/2025,  by CNPq, grant 303182/2026-1, and by Serrapilheira, grant R-2501-51476.}

\begin{abstract}
Both  the uniform Roe algebras and the quasi-local algebras   encode the large scale geometry of metric spaces. Whether these algebras  coincide is a question which goes back to Roe and has only been recently solved by Ozawa. We propose a dynamical point of view on this problem. Given a set $X$, every map $h\colon X\to\mathbb{R}$ induces a one-parameter group $\sigma_h$ of automorphisms of $\mathcal{B}(\ell_2(X))$, given by conjugation by the diagonal unitaries $e^{ith}$, and we show that the operators which are continuity points of $\sigma_h$ are precisely the uniform Roe algebra of the pseudo-metric induced by $h$. If we moreover assume that $X$ is a uniformly locally finite metric space and $h$ is allowed to range over all coarse maps, this gives dynamical characterizations of both algebras: $\cstql(X)$ consists exactly of the operators which are continuous for all such flows, while $\cstu(X)$ consists exactly of norm limits of those which are analytic. The regularity conditions lying between continuity and analyticity then give rise to   \cstar-algebras between $\cstu(X)$ and $\cstql(X)$ which are invariant under bijective coarse equivalence, and we show that this scale is not degenerate: if $X$ is a coarse disjoint union of expander graphs, the algebra generated by operators which are analytic on a strip for every diagonal flow sits strictly between $\cstu(X)$ and $\cstql(X)$.
\end{abstract}
\maketitle


\section{Introduction}

Uniform Roe algebras are $\mathrm{C}^*$-algebras which encode the large scale
geometry of metric spaces. Introduced by Roe in the context of index theory
of elliptic operators on noncompact manifolds (see \cite{Roe1988,Roe1993}), they have since
become a central object at the interface of coarse geometry, operator algebras,
and higher index theory.  Given a  metric space
$(X,d)$, $\ell_2(X)$ denotes the Hilbert space of complex-valued functions on $X$ whose squares are absolutely summable and $(\delta_x)_{x\in X}$ denotes its canonical orthonormal basis. An operator $a\in\mathcal{B}(\ell_2(X))$ is said to have
\emph{finite propagation} if there is $r>0$ such that
$\langle a\delta_y,\delta_x\rangle=0$ whenever $d(x,y)>r$, and the
\emph{uniform Roe algebra} of $X$  is the norm
closure of the operators of finite propagation; we denote it by     $\cstu(X)$ or by $\cstu(X,d)$ in case we want to emphasize the metric. Deciding whether an operator belongs to  $\cstu(X)$ is, however, a very complicated task in general; to corroborate this, notice that membership in $\cstu(X)$  is defined by
an existential quantifier ranging over the uncountable set of finite propagation operators, and
this quantifier cannot be replaced by one ranging over a countable set (whenever $X$ is infinite).  This led Roe to consider a more flexible
notion \cite{Roe1988}: an operator $a\in \mathcal{B}(\ell_2(X))$ is
\emph{quasi-local} if for every $\varepsilon>0$ there is $r>0$ such that
$\|\chi_A a\chi_B\|\le \varepsilon$ whenever $A,B\subseteq X$ satisfy
$d(A,B)\ge r$ --- here, for $C\subseteq X$,  $\chi_C$  denotes the orthogonal projection on $\ell_2(X)$ with range $\ell_2(C)$. Quasi-local operators form a $\mathrm{C}^*$-algebra called  the
\emph{quasi-local algebra} of $X$; we denote it by $\mathrm{C}^*_{ql}(X)$ or by $\mathrm{C}^*_{ql}(X,d)$ in case some emphasis is needed. It is straightforward that  $\cstql(X)$ contains
$\mathrm{C}^*_u(X)$. Whether these two algebras coincide is a question which
goes back to Roe and which has attracted considerable attention in recent years.
On the positive side, $\mathrm{C}^*_u(X)=\mathrm{C}^*_{ql}(X)$ whenever $X$ has
Yu's property A (see \cite[Theorem 3.3]{SpakulaZhang2020JFA} for the original proof of this result or \cite[Theorem 5]{Ozawa2023} for a simpler proof). On the negative
side, Ozawa has recently shown that if $X$ is a coarse disjoint union of
expander graphs, then the product $\prod_n \mathrm{M}_n$ of matrix algebras embeds
into $\mathrm{C}^*_{ql}(X)$, while it does not embed into $\mathrm{C}^*_u(X)$
(see \cite[Theorems A and B]{Ozawa2023}). In particular, $\mathrm{C}^*_u(X)\subsetneq \mathrm{C}^*_{ql}(X)$
for such spaces.

Ozawa's result shows that, in general, there is room between
$\mathrm{C}^*_u(X)$ and $\mathrm{C}^*_{ql}(X)$. The goal of this paper is to
understand what lies in this gap. We do so by introducing a new, dynamical,
point of view on both algebras: we show that $\mathrm{C}^*_u(X)$ and
$\mathrm{C}^*_{ql}(X)$ are precisely the norm limits of operators which are, respectively,
\emph{analytic} and \emph{continuous} with respect to a canonical family of
one-parameter automorphism groups of $\mathcal{B}(\ell_2(X))$. This
characterization does more than reinterpret known objects: the whole scale of
regularity conditions lying between continuity and analyticity gives rise to
$\mathrm{C}^*$-algebras lying between $\mathrm{C}^*_u(X)$ and
$\mathrm{C}^*_{ql}(X)$, and we show that this scale is not degenerate. In
particular, we obtain a bijective coarse invariant $\mathrm{C}^*$-algebra which sits
strictly between the uniform Roe algebra and the quasi-local algebra of
expanders.

\subsection{The dynamical viewpoint}
Given a set $X$ and any map $h\colon X\to\mathbb{R}$, we let $\sigma_h=(\sigma_{h,t})_{t\in\mathbb{R}}$
be the one-parameter group of automorphisms of $\mathcal{B}(\ell_2(X))$ given by
conjugation by the diagonal unitaries $e^{ith}$. Precisely, under the canonical identification of $h$ with a  (possibly unbounded) self-adjoint multiplication operator on $\ell_2(X)$, $\sigma_h$ is given by
\[
\sigma_{h,t}(a)=e^{ith}ae^{-ith}\quad\text{for all } a\in\mathcal{B}(\ell_2(X))\text{ and } t\in\mathbb{R}.
\]
 We refer to $\sigma_h$ as the \emph{diagonal pre-flow} induced by $h$. Although
$\sigma_h$ is always a homomorphism $\mathbb{R}\to\mathrm{Aut}(\mathcal{B}(\ell_2(X)))$,
it is far from being a \emph{flow} in the usual sense, i.e., the orbit maps
$t\mapsto\sigma_{h,t}(a)$ are in general not norm continuous. This is why we
call it a \emph{pre-flow}.  Our starting point is \cite[Proposition 2.1]{BragaExel2023}: in the context of studying  KMS states on uniform Roe algebras, the authors showed that
$\sigma_h$ restricts to a genuine flow on $\mathrm{C}^*_u(X)$ precisely when
$h$ is a \emph{coarse map}, i.e., when \[\sup\{|h(x)-h(y)|\mid d(x,y)\le r\}<\infty\]
for all $r>0$. This suggests that the family of diagonal pre-flows indexed by
coarse maps should be able to detect the large scale geometry of $X$. The present paper confirms this in a strong sense. We shall now start describing our findings in detail.

Given a pre-flow $\sigma$ on a $\mathrm{C}^*$-algebra $A$, we denote by
$A^\sigma$ the $\mathrm{C}^*$-subalgebra of $A$ consisting of its
\emph{continuity points}, i.e.,
\[
A^\sigma=\{a\in A\mid t\mapsto \sigma_t(a)\ \text{is norm continuous}\}.
\]
Our first result identifies the continuity points of a diagonal pre-flow. Note
that it requires no geometry on $X$ at all: $X$ is merely a set, and the map
$h\colon X\to \R$ induces a pseudo-metric on it by letting  \[d_h(x,y)=|h(x)-h(y)|\] for all $x,y\in X$. For the next result, we consider the uniform Roe algebra given by this pseudo-metric, $\cstu(X,d_h)$; the definition is the same as the one above given by an actual metric.

\begin{theoremi}
Let $X$ be a set and $h\colon X\to\mathbb{R}$ be any function. Then
\[
\mathcal{B}(\ell_2(X))^{\sigma_h}=\mathrm{C}^*_u(X,d_h).
\]
More generally, if $A\subseteq\mathcal{B}(\ell_2(X))$ is a $\sigma_h$-invariant
$\mathrm{C}^*$-subalgebra, then $A^{\sigma_h}$ is the closure of the operators
in $A$ with finite $d_h$-propagation.\label{thmA}
\end{theoremi}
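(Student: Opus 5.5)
We prove the general statement for a $\sigma_h$-invariant \cstar-subalgebra $A$; the first claim is the case $A=\mathcal{B}(\ell_2(X))$. Matrix entries transform by $\langle\sigma_{h,t}(a)\delta_y,\delta_x\rangle=e^{it(h(x)-h(y))}\langle a\delta_y,\delta_x\rangle$. So $\sigma_h$ acts on the $(x,y)$ entry by a character of frequency $h(x)-h(y)$.

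\emph{Easy inclusion.} Let $a\in A$ have $d_h$-propagation at most $r$. Partition $\mathbb{R}$ into intervals $I_k=[kr,(k+1)r)$ and set $p_k=\chi_{h^{-1}(I_k)}$. Since $|h(x)-h(y)|\le r$ on the support of $a$, we can write $a=\sum_{j=-1}^{1}a_j$ with $a_j=\sum_k p_{k+j}ap_k$ (SOT-convergent block sums, each of norm $\le\|a\|$). On the block $p_{k+j}ap_k$, write $e^{ith}=e^{itc_k}e^{it(h-c_k)}$ with $c_k=kr$. Then
\[
\sigma_{h,t}(p_{k+j}ap_k)=e^{it jr}\,e^{it(h-c_{k+j})}\,p_{k+j}ap_k\,e^{-it(h-c_k)},
\]
and the diagonal unitaries $e^{it(h-c_k)}p_k$ satisfy $\|e^{it(h-c_k)}p_k-p_k\|\le |t|r$ uniformly in $k$. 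Hence $\|\sigma_{h,t}(a_j)-e^{itjr}a_j\|\le 2|t|r\|a\|$, so $\|\sigma_{h,t}(a)-a\|\to0$. Continuity at $0$ together with the group property gives continuity everywhere. Norm limits of continuity points are continuity points, since each $\sigma_{h,t}$ is isometric. Hence the closure lies in $A^{\sigma_h}$.

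\emph{Hard inclusion.} Let $a\in A^{\sigma_h}$. Smooth $a$ with a Fejér-type kernel: choose $f\in L^1(\mathbb{R})$, $f\ge0$, $\int f=1$, whose Fourier transform $\hat f$ is compactly supported in $[-r,r]$, e.g.\ a rescaled Fejér kernel, and define
\[
a_f=\int_{\mathbb{R}} f(t)\,\sigma_{h,t}(a)\,dt .
\]
The integral is a Bochner norm integral because the orbit is norm continuous and bounded; in general it is an SOT integral. Then $a_f$ lies in $A$: for continuity points the Bochner integral is a norm limit of Riemann sums of elements of $A$, using invariance. Its matrix entries are $\hat f(h(x)-h(y))\langle a\delta_y,\delta_x\rangle$ (with the sign convention adjusted), which vanish when $|h(x)-h(y)|>r$. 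So $a_f$ has finite $d_h$-propagation. Finally, taking an approximate identity $f_n$ (with $\hat f_n$ supported in growing intervals, e.g.\ $f_n(t)=nf(nt)$), norm continuity of the orbit gives $\|a_{f_n}-a\|\le\int f_n(t)\|\sigma_{h,t}(a)-a\|\,dt\to0$. Thus $a$ is a norm limit of finite-propagation elements of $A$.

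\emph{Main obstacle.} The delicate step is the second one. One must justify that $a_f$ belongs to $A$ (norm-convergent Riemann sums on compacts plus the $L^1$ tail), and verify that the matrix entries are multiplied by $\hat f$ evaluated at the frequency $h(x)-h(y)$. The latter follows by pairing the SOT/Bochner integral with $\delta_x,\delta_y$ and using Fubini on a bounded scalar integrand. The existence of a positive $L^1$ kernel with compactly supported Fourier transform (the Fejér kernel $\propto \sin^2(rt/2)/t^2$) is standard.
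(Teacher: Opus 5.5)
Your proof is correct. Your inclusion of $A^{\sigma_h}$ into the closure of the finite $d_h$-propagation elements of $A$ is exactly the paper's argument:
\begin{itemize}
\item Fej\'er smoothing $\Theta_{h,K_s}$ is the Schur multiplier $\hat K_s(h(x)-h(y))$, which kills the entries with $d_h(x,y)\ge s$.
\item The integral is a Bochner integral of a continuous $A$-valued map, so it lies in $A$.
\item $\Theta_{h,K_s}(a)\to a$ by continuity of the orbit at $0$.
\end{itemize}

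The reverse inclusion is where you take a genuinely different route. The paper stays inside the same Fourier-analytic calculus. The de la Vall\'ee Poussin kernel $V_s=2K_{2s}-K_s$ has $\hat V_s\equiv 1$ on $[-s,s]$, so $a=\Theta_{h,V_s}(a)$ whenever $a$ has $d_h$-propagation at most $s$. Then $\sigma_{h,t}\circ\Theta_{h,V_s}=\Theta_{h,V_s(\cdot-t)}$ gives $\|\sigma_{h,t}(a)-a\|\le\|V_s(\cdot-t)-V_s\|_1\|a\|\to0$.

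You instead slice $X$ into the level sets $h^{-1}([kr,(k+1)r))$ and write $\sigma_{h,t}(a_j)=e^{itjr}u_ta_ju_t^*$. Here $u_t=\sum_k e^{it(h-kr)}p_k$ is a single diagonal unitary with $\|u_t-1\|\le r|t|$. Using this global $u_t$ is the clean way to phrase your blockwise estimate; you also need to take $r>0$ so the slicing is nondegenerate.

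What each route buys:
\begin{itemize}
\item \emph{Yours.} It is elementary, needs no kernel with prescribed Fourier transform, and gives the explicit Lipschitz bound $\|\sigma_{h,t}(a)-a\|\le 8r|t|\,\|a\|$. It is the same slicing device the paper itself uses later, in the proof of Theorem \ref{Thm.qla.h.Points.Cont.Substructure} and in Lemma \ref{Lemma.IterSlicing}.
\item \emph{The paper's.} It gives uniformity and a slightly stronger structural fact: every $\Theta_{h,g}(b)$, with $g\in L_1(\mathbb{R})$ and $b\in\mathcal{B}(\ell_2(X))$ arbitrary, is a continuity point. Finite propagation enters only through the reproducing identity $\Theta_{h,V_s}(a)=a$, so both inclusions run on the single Schur-multiplier calculus $\Theta_{h,f}$.
\end{itemize}
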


The main ingredients to prove Theorem \ref{thmA} are the
Fejér  and the   de la Vall\'ee Poussin kernels. In a nutshell, the Fejér kernel allows us to approximate in norm the  continuity points of $\sigma_h$ by elements of finite $d_h$-propagation. On the other hand, Fejér kernels together with  the   de la Vall\'ee Poussin kernel ensure that finite $d_h$-propagation secures continuity under $\sigma_h$. This result has been obtained by   Ewert and Meyer in the context of mathematical physics for the case where $X$ is a subspace of the grid $\Z^n$ (see \cite[Theorem 4]{EwertMeyer2019CMP}). 

Theorem \ref{thmA} is a statement about a single map $h$. Under metric assumptions on $X$, one can detect the geometry of $X$ by letting $h$ range over
all coarse maps. From now on, we shall focus on \emph{uniformly locally finite} (abbreviated as \emph{u.l.f.}) metric spaces: these are metric spaces such that for all $r>0$ there is an upper bound on the cardinality of the balls of radius $r$ in $X$ (an upper bound independent of the center of the ball). For instance, Cayley graphs of finitely generated groups have this property. 

Given a metric 
space $X$, let
\[
\mathrm{CP}(X)=\bigcap_{\substack{h\colon X\to\mathbb{R}\\ h\ \text{coarse}}}\mathcal{B}(\ell_2(X))^{\sigma_h}
\]
be the $\mathrm{C}^*$-algebra of operators which are continuity points of every
diagonal flow induced by a coarse map. On the other end of the regularity
spectrum, recall that an element $a$ of a $\mathrm{C}^*$-algebra $A$ equipped
with a flow $\sigma$ is \emph{analytic} if $t\mapsto\sigma_t(a)$ extends to an
entire function $\mathbb{C}\to A$. Analytic elements are a classical tool in the theory of
$\mathrm{C}^*$-dynamical systems: they are the elements on which the KMS
condition is formulated, and they form a dense subalgebra for every flow
(see \cite{BratteliRobinsonBookII1997} for the theory of KMS states and \cite{BragaExel2023} for KMS states in the context of uniform Roe algebras). Define: \begin{itemize}
    \item $\mathrm{AP}(X)$ is the norm closure of the operators which are
analytic for $\sigma_h$ for every coarse map $h\colon X\to \R$.
\end{itemize}
We refer to   Definitions \ref{Def.BandAnalytic.functions} and Theorem \ref{thm:roeentire} for details. and 
\ref{Def.AP.algebra.defi} for further details. Our second main result says that these
dynamically defined algebras are exactly the classical ones.

\begin{theoremi}\label{thmB}
Let $X$ be a u.l.f.\ metric space. Then
\[
\mathrm{C}^*_{ql}(X)=\mathrm{CP}(X)\quad\text{and}\quad \mathrm{C}^*_u(X)=\mathrm{AP}(X).\]
\end{theoremi}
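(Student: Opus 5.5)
The plan is to prove the two equalities separately, using Theorem \ref{thmA} for the first and a direct matrix-coefficient analysis for the second. Throughout, for $x,y\in X$ write $a_{xy}=\langle a\delta_y,\delta_x\rangle$.

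\emph{The equality $\cstql(X)=\CP(X)$.} By Theorem \ref{thmA}, $\CP(X)=\bigcap_h \cstu(X,d_h)$, with $h$ ranging over the coarse maps $X\to\R$.

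For ``$\subseteq$'', fix a coarse $h$. Coarseness means that $d$-close points are $d_h$-close. Equivalently, $d_h(A,B)$ large forces $d(A,B)$ large, so every $d$-quasi-local operator is $d_h$-quasi-local. It then remains to see that $d_h$-quasi-local operators lie in $\cstu(X,d_h)$. Since $d_h$ is pulled back from $\R$, I would run the standard property A argument (Špakula--Zhang, or Ozawa's simpler proof) directly on $\R$. Take a partition of unity $(\varphi_k)_k$ on $\R$ by functions supported in intervals of length $2L$, of Lipschitz constant $\lesssim 1/L$, with multiplicity at most $2$. Pull it back via $h$ and set $a_L=\sum_k \varphi_k^{1/2}(h)\,a\,\varphi_k^{1/2}(h)$. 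Then $a_L$ has $d_h$-propagation at most $2L$. Moreover, $\|a-a_L\|$ is controlled by quasi-locality together with the commutator estimate $\|[a,\varphi_k^{1/2}(h)]\|$, which is small uniformly in $k$. No local finiteness of $d_h$ is needed for this, since all the estimates live on $\R$.

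For ``$\supseteq$'', suppose $a\in\CP(X)$ is not quasi-local. Then there are $\eps>0$ and sets $A_n,B_n$ with $d(A_n,B_n)\ge n$ and $\|\chi_{A_n}a\chi_{B_n}\|\ge\eps$. Using SOT approximation, I would shrink these to finite sets. Finite-rank corners are compact, and compact operators are both quasi-local and in every $\cstu(X,d_h)$. Hence the failure persists after discarding any bounded region, so I may pass to a subsequence in which the sets $A_n\cup B_n$ are pairwise very far apart. Define
\[
h=\sum_n \min\{d(\cdot,A_n),n\}\cdot \psi_n,
\]
where $\psi_n$ is a $1$-Lipschitz cutoff localizing near $A_n\cup B_n$. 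The supports are far apart, so $h$ is coarse (indeed Lipschitz). It satisfies $h|_{A_n}=0$ and $h|_{B_n}=n$. Now any $b$ of finite $d_h$-propagation has $\chi_{A_n}b\chi_{B_n}=0$ for large $n$. Hence $\|a-b\|\ge\eps$ for all such $b$, so $a\notin\cstu(X,d_h)$, a contradiction.

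\emph{The equality $\cstu(X)=\AP(X)$.} For ``$\subseteq$'', let $a$ have propagation $r$ and let $h$ be coarse. The candidate extension is $\sigma_{h,z}(a)_{xy}=e^{iz(h(x)-h(y))}a_{xy}$. The exponents are bounded by $C_r=\sup\{|h(x)-h(y)|\mid d(x,y)\le r\}$. By uniform local finiteness, $a$ is a finite sum of (diagonal multiples of) partial translations. So this entrywise multiplication is a bounded operator whose norm is locally bounded in $z$, and it is entire, being a finite sum of entire diagonal conjugations of each piece.

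For ``$\supseteq$'', let $a$ be analytic for every coarse $h$. Evaluating at $z=-is$ gives $\|(e^{s(h(x)-h(y))}a_{xy})_{x,y}\|\le M_{h,s}<\infty$. Given $A,B$ with $d(A,B)\ge r$ and any increasing $\phi\colon[0,\infty)\to[0,\infty)$, take $h=\phi(\min\{d(\cdot,A),r\})$, which is coarse. Compressing to $\chi_B\,\cdot\,\chi_A$ yields $\|\chi_B a\chi_A\|\le M\,e^{-\phi(r)}$. The key step, which I expect to be the main obstacle, is making $M$ independent of $A,B,r$. I would argue by contradiction with the same gluing trick as above. If the constants blow up along a sequence $(A_n,B_n,r_n)$, first discard bounded regions. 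Then glue the corresponding maps $h_n$ on far-apart regions into a single coarse $h$, whose analytic extension would have to be unbounded at $z=-i$.

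With uniform super-fast decay in hand, I would pick $\phi$ growing faster than $\log$ of the ball-size function $N(r)=\sup_x|B(x,r)|$ (finite by u.l.f.). Then split $a$ into $d$-bands $\{(x,y)\mid n\le d(x,y)<n+1\}$. The $n$-th band has norm at most $N(n+1)\max|a_{xy}|\le N(n+1)Me^{-\phi(n)}$, which is summable. So the truncations of $a$ to propagation $\le R$ converge in norm to $a$, and $a\in\cstu(X)$. Since $\cstu(X)$ is closed, $\AP(X)\subseteq\cstu(X)$.
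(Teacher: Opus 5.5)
\textbf{The gap is in $\AP(X)\subseteq\cstu(X)$.} You fix $\phi$ growing faster than $\log N$ and want $|a_{xy}|\le Me^{-\phi(d(x,y))}$ uniformly, by gluing the maps $h_n=\phi(\min\{d(\cdot,A_n),r_n\})$ into one coarse map. Gluing gives a coarse map only if the $h_n$ share a coarseness modulus. But $\omega_{h_n}(1)$ can be of order $\phi(r_n)-\phi(r_n-1)$, which is unbounded unless $\phi$ is at most linear. A u.l.f.\ space may grow super-exponentially, forcing $\phi\ge\log N$ to be super-linear. In that case your uniform decay claim is not merely unproved but false.

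For instance, let $X$ consist of $\Z$ together with $Y=(\Z,\log\log(e+|m-n|))$, the two origins at distance $1$. Here $Y$ is u.l.f.\ with $\log N_Y(r)\approx e^r$. Let $a$ act on $\ell_2(\Z)$ by $a_{xy}=e^{-|x-y|^{3/2}}$. Every coarse $h$ satisfies $|h(x)-h(y)|\le\omega_h(1)|x-y|$ on the $\Z$ part. So Schur's test bounds the candidate extension locally uniformly in $z$, and, as in the proof of Proposition~\ref{Prop.analitic.iff.finite.dh.prop}, $a$ is analytic for every $\sigma_h$. Yet $|a_{xy}|e^{\phi(d(x,y))}\to\infty$ along the $\Z$ part, since $\phi(r)\gtrsim e^r$.

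\textbf{The missing idea is the paper's preliminary reduction.} By Proposition~\ref{DranishnikovGongLafforgueYu2002CanMathJ.11}, replace $d$ by a bijectively coarsely equivalent metric with $N(m)\le\lambda^m$. Concretely, if $f$ is a coarse embedding into a graph $Y$ of degree at most $3$, take $d'(x,x')=d_Y(f(x),f(x'))+1$ for $x\ne x'$. This changes neither $\AP(X)$ nor $\cstu(X)$. Then a linear rate $c>\log\lambda$ suffices, i.e.\ you only evaluate the extension at $\pm ic$ for $1$-Lipschitz $h$. The uniformity you flagged as the main obstacle is then exactly Lemma~\ref{lem:detect}. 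It glues uniformly $1$-Lipschitz bumps, and it treats separately, via $d(\cdot,z)$, the case where one of the two sequences has a constant subsequence; ``discarding bounded regions'' cannot remove that case. This gives Lemma~\ref{lem:allrates}, and your band summation (Lemma~\ref{lem:volume}) then finishes.

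\textbf{The rest follows the paper's route, with two repairable slips.} Your other arguments match the paper: Theorem~\ref{thmA}, a partition of unity on $\R$ pulled back along $h$, bumps on far-apart finite sets, and the partial-translation decomposition from the remark after Proposition~\ref{Prop.analitic.iff.finite.dh.prop}.

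First, take $\min\{d(\cdot,A_n),n\}\psi_n$ with $\psi_n$ merely $1$-Lipschitz and equal to $1$ on $B_n$. Where $\psi_n$ drops, this map can change by about $n$ over distance $1$, so the glued $h$ need not be coarse. Use a $\tfrac1n$-Lipschitz cutoff instead, or simply $\max\{n-d(\cdot,A_n),0\}$ as in Proposition~\ref{PropNotInQLThereisCoarseNotInCPFlow}.

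Second, uniform smallness of the individual commutators $[a,\varphi_k^{1/2}(h)]$ does not by itself bound $\|a-a_L\|=\|\sum_k\varphi_k^{1/2}(h)[\varphi_k^{1/2}(h),a]\|$. What works is a commutator bound for every signed sum $\sum_k\pm\varphi_k^{1/2}(h)$, followed by Rademacher averaging and the operator Cauchy--Schwarz inequality, as in Lemma~\ref{lem:smoothing}. These signed sums are Lipschitz in $h$ once the square roots are, which is why the paper builds the partition from $\cos^2$.
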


In words: an operator is quasi-local if and only if it moves continuously under
every diagonal flow, and it belongs to the uniform Roe algebra if and only if it is a norm limit of operators that
move analytically  under every diagonal flow.  We show that an operator has finite propagation if and only if it is analytic for $\sigma_h$ for all coarse maps $h\colon X\to \R$ and, moreover, this analyticity is \emph{of exponential type}; we refer to Definitions \ref{Def.BandAnalytic.functions} and Theorem \ref{thm:roeentire} for details.  

Theorem \ref{thmB} places $\mathrm{C}^*_u(X)$ and $\mathrm{C}^*_{ql}(X)$ at the
 end points of a scale of regularity conditions, and one may now ask what the
intermediate regularity classes (Lipschitz orbits, differentiable orbits, analytic
orbits on a strip, etc.) correspond to. Each such class gives a
$\mathrm{C}^*$-algebra between $\mathrm{C}^*_u(X)$ and $\mathrm{C}^*_{ql}(X)$ which,
by construction, is a bijective coarse invariant of $X$.\footnote{Indeed, the definition
of these algebras only involves the coarse maps $X\to\mathbb{R}$, which depend
only on the coarse structure of $X$; see Section \ref{Section.Preliminaries} for the definition of a coarse structure.} When $X$ has property
A, all these algebras collapse to $\mathrm{C}^*_u(X)$. Our third main result shows
that for expanders, this is not the case. We concentrate on the natural weakening
of analyticity obtained by asking that $t\mapsto \sigma_{h,t}(a)$ extend
analytically only to a horizontal strip $\{z\in\mathbb{C}\mid |\mathrm{Im}(z)|\le\delta\}$ 
in the complex plane. Define: 
\begin{itemize}\item $\mathrm{AP}_{\mathrm{strip}}(X)$ is  the norm
closure of the operators which are analytic on a strip for $\sigma_h$ for every
coarse map $h\colon X\to \R$.\end{itemize}

\begin{theoremi}\label{thmC}
Let $X$ be a coarse disjoint union of expander graphs. Then
\[
\mathrm{C}^*_u(X)\subsetneq \mathrm{AP}_{\mathrm{strip}}(X)\subsetneq \mathrm{C}^*_{ql}(X).
\]
\end{theoremi}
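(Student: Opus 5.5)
We have to prove two strict inclusions, $\mathrm{C}^*_u(X)\subsetneq\mathrm{AP}_{\mathrm{strip}}(X)$ and $\mathrm{AP}_{\mathrm{strip}}(X)\subsetneq\mathrm{C}^*_{ql}(X)$. The non-strict inclusions come first. Analytic elements are analytic on every strip, so $\mathrm{AP}(X)\subseteq\mathrm{AP}_{\mathrm{strip}}(X)$, and Theorem \ref{thmB} gives $\mathrm{C}^*_u(X)\subseteq \mathrm{AP}_{\mathrm{strip}}(X)$. An operator analytic on a strip for $\sigma_h$ has a continuous orbit, so it lies in $\mathcal B(\ell_2(X))^{\sigma_h}$. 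Intersecting over coarse $h$ and taking norm closures, Theorem \ref{thmB} gives $\mathrm{AP}_{\mathrm{strip}}(X)\subseteq \mathrm{CP}(X)=\mathrm{C}^*_{ql}(X)$.

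\textbf{Strip-analytic operators via exponential decay.} The key quantitative fact I would establish is the following. Suppose $a$ is \emph{exponentially quasi-local}, meaning $\|\chi_A a\chi_B\|\le Ce^{-\lambda d(A,B)}$ for some $C,\lambda>0$. Then $a$ is analytic on a strip for every coarse $h$. To see this, fix a coarse $h$. Since $X$ is a graph, hence a geodesic-type space, coarseness upgrades to a large-scale Lipschitz bound $|h(x)-h(y)|\le L d(x,y)+L$. Decompose $a=\sum_n a_n$ according to $d_h$-bands: $a_n=\sum_k \chi_{h^{-1}[k,k+1)}\,a\,\chi_{h^{-1}[k+n,k+n+1)}$. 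Each $a_n$ is block-diagonal after a shift, so $\|a_n\|\lesssim \sup_k\|\chi_{h^{-1}[k,k+1)}a\chi_{h^{-1}[k+n,k+n+1)}\|\le Ce^{-\lambda(|n|-1-L)/L}$. Moreover $\sigma_{h,z}(a_n)$ has norm at most $e^{|\mathrm{Im} z|(|n|+1)}\|a_n\|$. So $\sum_n\sigma_{h,z}(a_n)$ converges on the strip $|\mathrm{Im} z|<\lambda/L$, and the orbit extends analytically there. Hence every exponentially quasi-local operator lies in $\mathrm{AP}_{\mathrm{strip}}(X)$.

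\textbf{First strictness: an element of $\mathrm{AP}_{\mathrm{strip}}(X)\setminus\mathrm{C}^*_u(X)$.} I would build one from Ozawa's construction. For an expander $X=\bigsqcup X_n$ with spectral gap, the projections $p_n$ onto constants on $X_n$ are limits of polynomials in the Laplacian. Ozawa's embedding $\prod_n\mathrm M_n\hookrightarrow\mathrm{C}^*_{ql}(X)$ comes from ghost-type projections obtained by functional calculus of finite-propagation operators at a spectral gap. Holomorphic functional calculus of a finite-propagation operator across a gap yields exponentially decaying matrix entries (Combes--Thomas type estimate). So the image of $\prod\mathrm M_n$, or at least a non-Roe element of it such as a suitable ghost projection, is exponentially quasi-local. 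By the previous step it lies in $\mathrm{AP}_{\mathrm{strip}}(X)$. Ozawa's Theorem B says $\prod\mathrm M_n$ does not embed into $\mathrm{C}^*_u(X)$, and this gives the first strict inclusion. I would check Ozawa's construction carefully to make sure the decay really is exponential.

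\textbf{Second strictness (main obstacle).} I need a quasi-local operator outside $\mathrm{AP}_{\mathrm{strip}}(X)$. The plan is to show a converse decay estimate: every $b\in\mathrm{AP}_{\mathrm{strip}}(X)$ is a norm limit of operators $a$ that are \emph{sub-exponentially} quasi-local in a uniform sense. Strip-analyticity for $\sigma_h$ gives, via a Paley--Wiener argument applied to $\chi_A a \chi_B$ with $h=d(\cdot,A)\wedge R$, an estimate $\|\chi_A a\chi_B\|\le C_a e^{-\delta_a d(A,B)}$, where the strip width may depend on $h$. A Baire-category argument over a suitable family of coarse $h$ should make the constants uniform. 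I would then take Ozawa's product embedding and feed in a sequence of blocks whose quasi-locality decays only polynomially, for instance via non-analytic functional calculus $f(\Delta)$ with $f$ only $C^k$ near the gap. Such an element is quasi-local but has propagation tails that no norm limit of exponentially-local operators can match on the expander components. This last separation is where I expect the real difficulty: norm limits destroy decay rates, so the obstruction must instead use spectral rigidity of expanders, namely that exponentially local operators that are nearly projections are close to the ghost projections $p_n$. Turning that into a norm-distance lower bound is the crux.
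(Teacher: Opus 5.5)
Your non-strict inclusions are correct. Your second step is essentially the paper's Theorem \ref{Thm.Led.In.Band}, with one repair needed. On a coarse disjoint union, a coarse $h$ need not be large-scale Lipschitz: it may take arbitrary constant values on the components $X_n$. So an exponentially quasi-local $a$ that is not block-diagonal need not be strip-analytic for every coarse $h$. You must first remove the compact operator $a-\SOTh\sum_n\chi_{X_n}a\chi_{X_n}$, and then use $|h(x)-h(y)|\le\omega_h(1)d(x,y)$ inside each $X_n$.

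The genuine gap is that neither strict inclusion gets a valid witness. Everything you propose is continuous functional calculus of a finite-propagation operator: spectral-gap projections of the Laplacian, Combes--Thomas-type holomorphic calculus, or $f(\Delta)$ with $f$ only $C^k$. Since $\cstu(X)$ is a \cstar-algebra, all of these lie in $\cstu(X)\subseteq\AP_{\mathrm{strip}}(X)$. Indeed, the projection onto the constants of each $X_n$ is the classical ghost projection \emph{inside} $\cstu(X)$.

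Ozawa's embedding is of a different nature. It comes from concentration of measure, via subspaces $W_n\subseteq\ell_2(X_n)$ with $\|\chi_Ap_{W_n}\|$ small whenever $|A|/|X_n|$ is small. To get exponential decay, the paper takes $\dim W_n=\lceil|X_n|^{1/4}\rceil$ with $\|\chi_Ap_{W_n}\|\le C\sqrt{\delta\log(1/\delta)}$ for $|A|\le\delta|X_n|$ and $\delta\in[1/\dim W_n,1/2]$ (Lemma \ref{LemmaGoodSubspacesQuarterPower}). Combined with the expander inequality $\min\{|A|,|B|\}/|X_n|\le\kappa^{-d(A,B)/2}$, this gives $\eps_a(r)\le2C\kappa^{-r/24}$ for every contraction $a\in\prod_n\cB(W_n)$. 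Your step 2 and Ozawa's Theorem A then give $\cstu(X)\subsetneq\AP_{\mathrm{strip}}(X)$. Note that Theorem A is the non-embeddability into $\cstu(X)$; Theorem B is the embedding into $\cstql(X)$.

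For the second strict inclusion, your reduction to $\AP_{\mathrm{strip}}(X)\subseteq\mathrm{QL}_{\exp}(X)$ by a Baire argument is the right one. The paper runs it on the compact space of $1$-Lipschitz functions vanishing at a base point. Instead of Paley--Wiener it uses the elementary bound $\|\chi_Aa\chi_B\|\le e^{-\theta(\inf_Ah-\sup_Bh)}\|F(-i\theta)\|$, plus a gluing of $1$-Lipschitz functions to land in the Baire open set. However, as you acknowledge, you have no operator in $\cstql(X)\setminus\mathrm{QL}_{\exp}(X)$ and no obstruction.

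The missing idea is to use delocalized projections of \emph{large} rank. Take $p_\alpha=\sum_np_{n,\alpha}$ with $\rank(p_{n,\alpha})=\lfloor|X_n|/(\log|X_n|)^{2\alpha}\rfloor$ and $\|\chi_Ap_{n,\alpha}\|\le C\big((\log|X_n|)^{-2\alpha}+\tfrac{|A|}{|X_n|}\log\tfrac{e|X_n|}{|A|}\big)^{1/2}$. These lie in $\mathrm{QL}_\alpha(X)\subseteq\cstql(X)$ by the expander inequality together with $\mathrm{diam}(X_n)\lesssim\log|X_n|$.

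The distance lower bound is a trace comparison. Suppose $a=\sum_na_n$ is self-adjoint with $\|a-p_\alpha\|\le\delta$. Then $\mathrm{Tr}(a_n^{2m})\ge\rank(p_{n,\alpha})(1-\delta)^{2m}$. On the other hand, $\mathrm{Tr}(a_n^{2m})=\sum_x\|a_n^m\delta_x\|^2$ and $\|a_n^m\delta_x\|\le\|\chi_Aa_n\chi_A\|^m+m\|a\|^{m-1}\eps_a(r)$ for $A=B(x,mr)$. Keeping $|A|\lesssim\sqrt{|X_n|}$ forces $\|\chi_Aa_n\chi_A\|\le2\delta$. Take $m$ of order $\log\log|X_n|$ and assume $\eps_a(r)=O(r^{-\beta})$ with $\beta>\alpha$ (a fortiori exponential decay). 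Then $\mathrm{Tr}(a_n^{2m})\lesssim|X_n|(2\delta)^{2m}$, which contradicts the rank bound for small $\delta$.

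This argument needs rank comparable to $|X_n|$ up to logarithmic factors. For a rank-one-per-block ghost projection it gives nothing, consistently with that projection lying in $\cstu(X)$.
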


To the best of our knowledge, $\mathrm{AP}_{\mathrm{strip}}(X)$ is the first
example of a  bijective coarse invariant $\mathrm{C}^*$-algebra lying strictly between the
uniform Roe algebra and the quasi-local algebra. 

\subsection{Quantifying quasi-locality}
The proof of Theorem \ref{thmC} goes through a different, purely metric, way
of navigating  between the uniform Roe and the quasi-local algebra, which is
of independent interest. Given $a\in\mathcal{B}(\ell_2(X))$, its
\emph{quasi-locality  modulus} is the function
\[
\varepsilon_a(r)=\sup\{\|\chi_A a\chi_B\|\mid A,B\subseteq X,\ d(A,B)\ge r\},\quad r\ge 0.
\]
Thus $a$ has finite propagation if and only if $\varepsilon_a$ is eventually
zero, and $a$ is quasi-local if and only if $\varepsilon_a(r)\to 0$ as $r\to\infty$.
Between these two extremes, one can prescribe the rate of decay: for
$\alpha>0$, we let \begin{itemize}\item $\mathrm{QL}_\alpha(X)$ be the norm closure of the operators
with $\varepsilon_a(r)=O(r^{-\alpha})$, and
\item $\mathrm{QL}_{\exp}(X)$ be the norm
closure of the operators whose modulus decays exponentially.
\end{itemize}
These are
$\mathrm{C}^*$-algebras and
\[
\mathrm{C}^*_u(X)\subseteq \mathrm{QL}_{\exp}(X)\subseteq \mathrm{QL}_\beta(X)\subseteq \mathrm{QL}_\alpha(X)\subseteq \mathrm{C}^*_{ql}(X)
\]
for all $0<\alpha<\beta$.  Our fourth main result shows that, for expanders,
this is a genuine continuum of distinct $\mathrm{C}^*$-algebras.

\begin{theoremi}\label{thmD}
Let $X$ be a coarse disjoint union of expander graphs. Then
\[
\mathrm{C}^*_u(X)\subsetneq \mathrm{QL}_{\exp}(X)\subsetneq \mathrm{QL}_\beta(X)\subsetneq \mathrm{QL}_\alpha(X)\subsetneq \mathrm{C}^*_{ql}(X)
\]
for all $0<\alpha<\beta$. Moreover, $\prod_n\mathrm{M}_n$ embeds into
$\mathrm{QL}_{\exp}(X)$.
\end{theoremi}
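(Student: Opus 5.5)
Throughout, $X=\bigsqcup_n X_n$, where the $X_n$ are finite $k$-regular expander graphs with $|X_n|\to\infty$ and $d(X_n,X_m)\to\infty$. We need two ingredients: Ozawa-type operators lying in $\mathrm{QL}_{\exp}(X)\setminus\cstu(X)$, and, for each pair of decay rates, an operator with one rate that is not a norm limit of operators with the faster rate.

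\emph{Step 1: $\cstu(X)\subsetneq\mathrm{QL}_{\exp}(X)$ and the embedding of $\prod_n\mathrm{M}_n$.} Following \cite{Ozawa2023}, we use the heat semigroup. Let $\Delta_n$ be the graph Laplacian of $X_n$; it has propagation $1$ and spectral gap $\ge c>0$. Set $p_n=\lim_t e^{-t\Delta_n}$, the projection onto the constant functions on $X_n$. Since $\|e^{-t\Delta}-p\|\le e^{-ct}$ while $e^{-t\Delta}$ is quasi-local with exponential modulus, the operator $e^{-t\Delta}$ agrees with $p=\sum_n p_n$ up to $e^{-ct}$ in norm. Truncating the power series of $e^{-t\Delta}$ at degree $m$ gives an operator of propagation $m$ with error $\lesssim (2kt)^m/m!$. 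Choosing $t\approx r/C$ shows that $\varepsilon_p(r)\le e^{-c'r}$, so $p\in\mathrm{QL}_{\exp}(X)$.

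To embed $\prod\mathrm{M}_n$, we reproduce Ozawa's construction: the operators he uses are built from partial isometries between clusters composed with such spectral projections. We check that the resulting family has uniformly exponentially decaying modulus. The norm closure $\mathrm{QL}_{\exp}$ is a \cstar-algebra (moduli satisfy $\varepsilon_{ab}(2r)\le\|a\|\varepsilon_b(r)+\varepsilon_a(r)\|b\|$), so Ozawa's embedding lands in $\mathrm{QL}_{\exp}(X)$. Non-membership of the relevant elements in $\cstu(X)$ is then exactly \cite[Theorem B]{Ozawa2023}.

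\emph{Step 2: separating the polynomial scales.} For $0<\alpha<\beta$ we construct $a=\sum_n a_n\in\mathrm{QL}_\alpha(X)$ with $a_n\in\mathcal{B}(\ell_2(X_n))$ such that $\mathrm{dist}(a,\mathrm{QL}_\beta(X))>0$. The idea is to use a functional-calculus operator with slow decay: take $f(\Delta_n)$ with $f$ chosen of limited smoothness, for instance $f(\lambda)=|\lambda-\lambda_0|^{s}$ or a rough bump. Polynomial approximation theory (Jackson/Bernstein) then gives $\varepsilon_{f(\Delta)}(r)\approx r^{-s}$. For the lower bound we need an invariant that survives norm perturbation. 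For this we use the expander structure: any $b$ with $\varepsilon_b(r)=O(r^{-\beta})$ can be approximated by propagation-$r$ operators with error $r^{-\beta}$. Compressing such an operator to the graph and testing against suitable vectors gives a bound relating $\|a-b\|$ to quantities at scale $r$. Choosing the sequence $a_n$ so that at scale $r_n\sim\operatorname{diam}X_n$ the operator $a$ is far from every propagation-$r_n$ operator by $\gtrsim r_n^{-\alpha}$ (a ``Bernstein-type'' lower bound coming from the spectral gap and the fact that finite-propagation operators commute approximately with averaging), we reach a contradiction with $\|a-b\|$ small plus $b$ being $r_n^{-\beta}$-close to propagation $r_n$. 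The cases $\mathrm{QL}_{\exp}\subsetneq\mathrm{QL}_\beta$ and $\mathrm{QL}_\alpha\subsetneq\cstql$ use the same scheme with the rates $e^{-r}$ versus $r^{-\beta}$, and $r^{-\alpha}$ versus a rate like $1/\log r$.

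\emph{Main obstacle.} The hard part is the lower bound in Step 2: showing that a slow-decay operator is not even a \emph{norm limit} of faster-decaying ones. Norm limits destroy decay rates, so an invariant that is stable under small perturbations is needed. We would first try to mimic Ozawa's argument, which shows that $p\notin\cstu$ via a ghost/averaging trick, but run it in a quantitative form. Concretely, we would prove the following: for $b$ with $\varepsilon_b(r)\le Cr^{-\beta}$, the compression $p_nbp_n$ is controlled, up to $O(r^{-\beta})$, by local data, whereas our $a$ is designed so that its compression deviates by $\gtrsim r^{-\alpha}$ with $\alpha<\beta$, uniformly along a subsequence. If that fails, we would instead define $a$ as a direct sum of rescaled Ozawa-type blocks with weights $r_n^{-\alpha}$, so that membership in $\mathrm{QL}_\alpha$ is automatic. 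Non-membership in $\mathrm{QL}_\beta$ would then reduce, after normalizing each block, to Ozawa's qualitative non-approximability applied uniformly in $n$.
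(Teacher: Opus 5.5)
Both steps have genuine gaps, so the proposal does not prove the theorem.

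\textbf{Step 1.} The heat-semigroup projection $p=\sum_np_n$ cannot separate anything. By the uniform spectral gap, $p=\chi_{\{0\}}(\Delta)$ is a continuous function of the propagation-$1$ operator $\Delta$. Hence $p\in\mathrm{C}^*_u(X)$; it is the classical ghost projection.

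Ozawa's embedding is also not built from ``partial isometries between clusters composed with spectral projections''. Pieces connecting clusters $X_n,X_m$ with $d(X_n,X_m)\to\infty$ would not even give quasi-local elements. The construction instead uses concentration of measure to produce delocalized subspaces $W_n\subseteq\ell_2(X_n)$, meaning $\|\chi_Ap_{W_n}\|$ is small whenever $|A|/|X_n|$ is small, and then embeds $\prod_n\mathcal{B}(W_n)$.

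Your sentence ``we check that the resulting family has uniformly exponentially decaying modulus'' is the whole content of this part. It needs a quantitative input you do not supply: the estimate $\|\chi_Ap_{W_n}\|\le C\sqrt{\delta\log(1/\delta)}$ for all $\delta\in[1/\dim W_n,1/2]$ and $|A|\le\delta|X_n|$, with $\dim W_n$ a power of $|X_n|$. The paper takes $\dim W_n=\lceil|X_n|^{1/4}\rceil$ (Lemma~\ref{LemmaGoodSubspacesQuarterPower}). Combine this with $\min\{|A|,|B|\}/|X_n|\le\kappa^{-d(A,B)/2}$, and its consequence $|X_n|\ge\kappa^{r/2}$ whenever $X_n$ contains two points at distance $\ge r$. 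This yields $\varepsilon_a(r)\le 2C\kappa^{-r/24}$ for every contraction $a\in\prod_n\mathcal{B}(W_n)$. The strictness $\mathrm{C}^*_u(X)\neq\mathrm{QL}_{\exp}(X)$ then follows because $\prod_n\mathrm{M}_n$ does not embed into $\mathrm{C}^*_u(X)$. That is \cite[Theorem A]{Ozawa2023}, not Theorem B.

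\textbf{Step 2.} This step fails as designed, for three reasons.
\begin{itemize}
\item For any continuous $f$, $f(\Delta)\in\mathrm{C}^*_u(X)\subseteq\mathrm{QL}_\beta(X)$ for every $\beta$. So no such operator can separate the polynomial scales. The slow decay of $\varepsilon_{f(\Delta)}$ is irrelevant, because the algebras are norm closures.
\item Your claim that $\varepsilon_b(r)=O(r^{-\beta})$ makes $b$ $O(r^{-\beta})$-close to a propagation-$r$ operator is false on expanders. It would give $\mathrm{QL}_\beta(X)\subseteq\mathrm{C}^*_u(X)$, contradicting your own Step 1.
\item Even granting that claim, a lower bound of order $r_n^{-\alpha}$ on the distance from $a$ to propagation-$r_n$ operators only contradicts $\|a-b\|\le\delta$ once $\delta\lesssim r_n^{-\alpha}\to0$. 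So it never excludes $a$ from a norm closure. Your fallback, with weights $r_n^{-\alpha}\to 0$, is a norm-convergent sum of finite-rank blocks, i.e.\ a compact operator, again in $\mathrm{C}^*_u(X)$.
\end{itemize}

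The missing idea is an invariant that is stable under perturbations of a fixed size $\delta$. The paper's construction is as follows.
\begin{itemize}
\item It takes projections $p_\alpha=\sum_np_{n,\alpha}$ of rank $\approx|X_n|/(\log|X_n|)^{2\alpha}$ satisfying $\|\chi_Ap_{n,\alpha}\|^2\lesssim(\log|X_n|)^{-2\alpha}+\frac{|A|}{|X_n|}\log\frac{e|X_n|}{|A|}$.
\item Then $p_\alpha\in\mathrm{QL}_\alpha(X)$, because $\mathrm{diam}(X_n)\lesssim\log|X_n|$.
\item Now let $a$ be self-adjoint and block-diagonal with $\|a-p_\alpha\|\le\delta$ and $\varepsilon_a(r)\le Cr^{-\beta}$. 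The lower bound $\mathrm{Tr}(a_n^{2m})\ge\mathrm{rank}(p_{n,\alpha})(1-\delta)^{2m}$ comes from Lemma~\ref{lem:trace}.
\item This is compared with $\mathrm{Tr}(a_n^{2m})=\sum_x\|a_n^m\delta_x\|^2$. Lemma~\ref{lem:compression} gives $\|a_n^m\delta_x\|\le(2\delta)^m+m(1+\delta)^{m-1}Cr^{-\beta}$, since $p_{n,\alpha}$ compresses to norm $\le\delta$ on balls of radius $mr\le\log|X_n|/(2\log k)$.
\item Choosing $m\approx\beta'\log\log|X_n|/\log\frac{1+\delta}{\delta}$, with $r$ chosen accordingly, forces $\beta'\le\alpha$ as $\delta\to0$.
\end{itemize}

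The endpoint strictnesses are then formal: $\mathrm{QL}_{\exp}\subseteq\mathrm{QL}_{2\beta}\subsetneq\mathrm{QL}_\beta$ and $\mathrm{QL}_\alpha\subsetneq\mathrm{QL}_{\alpha/2}\subseteq\mathrm{C}^*_{ql}$. So no separate $e^{-r}$ or $1/\log r$ construction is needed.
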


The last statement of Theorem \ref{thmD} strengthens Ozawa's embedding theorem
\cite[Theorem B]{Ozawa2023}: not only does $\prod_n\mathrm{M}_n$ embed into the
quasi-local algebra of an expander, but it does so as operators whose
quasi-locality modulus decays exponentially fast. The proofs of both
Theorems \ref{thmC} and \ref{thmD} rely on consequences of the concentration of measure
phenomenon on high dimensional spheres (see Lemma \ref{cor.lem:frame}), following the strategy of \cite{Ozawa2023,LiZhangZhu2026},
combined with a trace argument which allows us to detect the rate of decay of
quasi-locality moduli through the growth of powers of self-adjoint operators. 
 
We stress that, unlike the algebras $\mathrm{AP}_{\mathrm{strip}}(X)$, the
algebras $\mathrm{QL}_\alpha(X)$ and $\mathrm{QL}_{\exp}(X)$ are \emph{not} bijective 
coarse invariants: as the quasi-locality  modulus is not a bijective coarse invariant,  their definitions depend a priori on the metric, and not only on its
coarse equivalence class. However, for coarse disjoint unions of finite graphs, this ends up not being an issue and we obtain the following.

\begin{theoremi}\label{thmE}
Let $X$ be a u.l.f.\ metric space. Then
$\mathrm{AP}_{\mathrm{strip}}(X)\subseteq \mathrm{QL}_{\exp}(X)$. If moreover
$X$ is a coarse disjoint union of finite connected graphs, then
$\mathrm{AP}_{\mathrm{strip}}(X)= \mathrm{QL}_{\exp}(X)$.
\end{theoremi}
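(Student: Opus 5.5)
The proof has two halves: the inclusion $\mathrm{AP}_{\mathrm{strip}}(X)\subseteq \mathrm{QL}_{\exp}(X)$ for a general u.l.f. space, and the reverse inclusion for coarse disjoint unions of finite connected graphs.

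\emph{First inclusion.} Both algebras are norm closures, so it suffices to show the following: if $a$ is analytic on a strip for $\sigma_h$ for every coarse $h$, then $\varepsilon_a$ decays exponentially. I will argue by contradiction. Suppose there are $A_n,B_n\subseteq X$ with $d(A_n,B_n)\ge r_n\to\infty$ and $\|\chi_{A_n}a\chi_{B_n}\|\ge e^{-r_n/n}$ (passing to a subsequence with $r_n$ growing fast). The aim is to build a single coarse map $h$ that defeats analyticity on every strip. The natural choice is a function which, near the pair $(A_n,B_n)$, looks like $t_n\,d(\cdot,A_n)$ truncated at $r_n$, with slopes $t_n\to\infty$ slowly. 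The rate must be slow enough that $h$ remains coarse, i.e. $|h(x)-h(y)|\le \omega(d(x,y))$; so I would take $h=\sup_n \min(\lambda_n d(\cdot,A_n), \lambda_n r_n)$ on disjoint far-apart regions, or better a single $1$-Lipschitz-in-scale construction.

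If $a$ is analytic on the strip $|\mathrm{Im} z|\le\delta$, then
\[
\sigma_{h,i\delta}(a)=e^{-\delta h}ae^{\delta h}
\]
is bounded, at least on finitely supported vectors. Hence
\[
\|\chi_{A}a\chi_{B}\|\le \|\sigma_{h,i\delta}(a)\|\,e^{-\delta(\inf_B h-\sup_A h)}.
\]
Since $h$ separates $A_n$ and $B_n$ by $\lambda_n r_n$, this gives $\|\chi_{A_n}a\chi_{B_n}\|\le Ce^{-\delta\lambda_n r_n}$, which for $\delta$ fixed and $\lambda_n$ large eventually contradicts $\ge e^{-r_n/n}$.

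A simpler alternative is to use $h=d(\cdot,x_0)$-type Lipschitz maps directly, since Lipschitz maps are coarse. Then $\|\chi_Aa\chi_B\|\le C_h e^{-\delta\, d(A,B)}$ already gives exponential decay, provided one shows the estimate is uniform over all pairs $(A,B)$. This uniformity is the main obstacle: analyticity for each single $h=d(\cdot,A)$ gives a constant depending on $A$. I would overcome it by a Baire category or uniform boundedness argument over the family of $1$-Lipschitz maps, or else by the gluing construction above, which packs countably many bad pairs into one coarse $h$. I expect the gluing to work because u.l.f. lets us select the pairs far apart.

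\emph{Reverse inclusion.} Let $X=\bigsqcup X_n$ be a coarse disjoint union of finite connected graphs, and let $a$ satisfy $\varepsilon_a(r)\le Ce^{-cr}$. I would approximate $a$ in norm by its block-diagonal part $\sum_n\chi_{X_n}a\chi_{X_n}$; the off-diagonal blocks have separation tending to infinity, so their contribution is small after cutting finitely many blocks, and finite-rank pieces are harmless. Each block $X_n$ is finite. A coarse map $h$ restricted to a connected graph satisfies $|h(x)-h(y)|\le L\,d(x,y)$ with $L=\omega(1)$, using paths with unit steps. Decompose $a_n=\chi_{X_n}a\chi_{X_n}$ into bands $\sum_k a_n^{(k)}$, where $a_n^{(k)}$ collects the matrix entries at distance in $[k,k+1)$. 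Then
\[
\sigma_{h,z}(a_n^{(k)})\ \text{has norm}\ \le e^{L|\mathrm{Im}z|(k+1)}\|a_n^{(k)}\|.
\]
The remaining task is to bound $\|a_n^{(k)}\|$ by something like $N_k\,\varepsilon_a(k)$, using u.l.f. to control the number of pieces (via a coloring of the distance-$k$ relation, with only polynomially or exponentially many pieces). Summing over $k$ then converges for $|\mathrm{Im}z|<\delta$ small, uniformly in $n$. Since the bound on the number of colors might grow exponentially and beat $c$, I may first modify $a$ by a small perturbation, or use that $\mathrm{QL}_{\exp}$ is a closure, to get the required rate.
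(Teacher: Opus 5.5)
Both halves have genuine gaps. The one in the reverse inclusion is fatal precisely for expanders.

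\textbf{Reverse inclusion.} Your displayed estimate is correct. It is the Phragm\'en--Lindel\"of bound of Proposition \ref{Prop.analitic.iff.finite.dh.prop}, since $a_n^{(k)}$ has $d_h$-propagation at most $L(k+1)$. The problem is that summing it over distance bands cannot work, and no perturbation rescues it. Write $a^{(k)}=\sum_n a_n^{(k)}$. You need $\sum_k e^{L(k+1)\delta}\|a^{(k)}\|<\infty$ for some $\delta>0$. This already contains $\sum_k\|a^{(k)}\|<\infty$, which says $a$ is a norm-convergent sum of finite-propagation operators, i.e.\ $a\in\cstu(X)$. This is also why the coloring factor $N_X(k)$, exponential for expanders, can never be beaten (compare Lemma \ref{lem:volume}). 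For expanders, the operators of Theorem \ref{Thm.Exp.decay.Still.Contains} have exponentially decaying modulus but do not all lie in $\cstu(X)$ (see the proof of Theorem \ref{thmD}). So your method only reaches operators already in $\cstu(X)$. Approximating $a$ by better-decaying operators cannot help either, since $\cstu(X)$ is closed.

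The missing idea is to slice along level sets of $h$ rather than along distance. Set $E_j=h^{-1}([jL,(j+1)L))$ and $T_m(a)=\sum_j\chi_{E_{j+m}}a\chi_{E_j}$. Each $T_m(a)$ is an orthogonal sum, so for $|m|\ge2$ you get $\|T_m(a)\|=\sup_j\|\chi_{E_{j+m}}a\chi_{E_j}\|\le\varepsilon_a(|m|)$ with no counting factor. This holds because on each connected block $h$ is $L$-Lipschitz and distances are integers, so $d(E_{j+m}\cap X_n,E_j\cap X_n)\ge|m|$. The pieces $T_m(a)$ have $d_h$-propagation at most $(|m|+1)L$ but in general infinite $d$-propagation. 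That is exactly why summability no longer forces $a$ into $\cstu(X)$.

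The paper feeds this slicing into the bound $\|\ad_h^k(a)\|\le2(3L)^k(\|a\|+\sum_{m\ge2}m^k\varepsilon_a(m))$ (Lemma \ref{Lemma.IterSlicing}) and then a power series in $\ad_h$ (Lemma \ref{Lemma.BandFromMoments}). Within your own framework you could instead sum $\|\sigma_{h,z}(T_m(a))\|\le e^{(|m|+1)L|\mathrm{Im}z|}\|T_m(a)\|$. This converges uniformly on $|\mathrm{Im}z|\le\delta$ for any $\delta<c/L$.

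\textbf{First inclusion.} You correctly identify the obstacle: the constants from analyticity depend on $h$, hence on the pair $(A,B)$. But you do not overcome it, and the gluing as written fails for three reasons.
\begin{enumerate}
\item $\min(\lambda_nd(\cdot,A_n),\lambda_nr_n)$ is not a localized bump; it equals $\lambda_nr_n$ off the $r_n$-neighbourhood of $A_n$.
\item Unbounded slopes $\lambda_n$ are in general not coarse. A fixed slope suffices anyway, since your lower bound is $e^{-r_n/n}$.
\item Most importantly, ``u.l.f.\ lets us select the pairs far apart'' is exactly the unproved step. The compactness argument of Proposition \ref{PropNotInQLThereisCoarseNotInCPFlow} removes a fixed finite set at a fixed cost. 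Here you must remove a neighbourhood of radius comparable to $r_n$ at a cost below $e^{-r_n/n}$, and both depend on the scale you are trying to choose.
\end{enumerate}

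The paper gets uniformity from Baire's theorem on the compact space of $1$-Lipschitz maps vanishing at $x_0$. This space is covered by the closed sets $E_n=\{h\mid\|[e^{(h(x)-h(y))/n}a_{xy}]\|\le n\}$. The real work afterwards is a gluing claim: every $g/2+c$ with $g\ge0$ $1$-Lipschitz can be altered on a finite set $G$ so as to land in the open set found inside $E_{n_0}$. The pieces $\chi_{A\cap G}a\chi_B$ and $\chi_Aa\chi_{B\cap G}$ are then handled through the finitely many maps $\pm d(\cdot,x)$, $x\in G$, and Lemma \ref{Lemma.GapEstimate}.

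Your gliding-hump route can also be completed, but only with an extra idea. Let $F_n$ be the finite union of the previously chosen sets, and suppose $a$ is analytic for $\sigma_{d(\cdot,F_n)}$ on the strip of width $\theta_n$ with bound $K_n$. At stage $n$, take the exponent $c_n<\mu\theta_n/2$. For $\mu<1/3$ and $r_n$ large, this single map bounds by $2K_ne^{-\theta_n\mu r_n}$ the cost of deleting the $\mu r_n$-neighbourhood of $F_n$ from a new pair. Then $h=\sup_n(\tfrac{\mu}{2}r_n-d(\cdot,A_n))^+$ is $1$-Lipschitz, vanishes on every $B_n$, and is at least $\tfrac{\mu}{2}r_n$ on $A_n$, which gives the contradiction.
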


The first inclusion in Theorem \ref{thmE} is obtained by a Baire category argument on the compact space
of $1$-Lipschitz functions on $X$, which produces a single strip width working
uniformly for all $1$-Lipschitz maps; the converse inclusion for disjoint unions of
graphs is obtained through explicit estimates on the iterated commutators
$\mathrm{ad}_h^k(a)=[h,[h,\dots[h,a]\dots]]$. Theorem \ref{thmC} then follows by
combining Theorems \ref{thmB}, \ref{thmD} and \ref{thmE}. In particular,
Theorem \ref{thmE} shows that, at least for coarse disjoint unions of graphs, norm approximation by operators with 
exponential decay of their quasi-locality modulus, a metric condition, is in fact
a bijective  coarse invariant.

\subsection{Coarse spaces}
Several of our results hold, and are proved, for \emph{coarse} spaces. We postpone to Section \ref{Section.Preliminaries} the precise definition of such spaces. For now, we simply say  that these are pairs $(X,\cE)$, where $X$ is a set and $\cE$ a family of subsets of $X\times X$ satisfying some permanence properties, which serve as abstractions of metric spaces and code their large scale aspects.   In this setting, while an arbitrary map
$h\colon X\to\mathbb{R}$ need not be coarse, there is always a largest coarse substructure $\cE_h$ of $\cE$ for which $h$ is coarse 
and we prove that
\[
\mathrm{C}^*_u(X,\mathcal{E})^{\sigma_h}=\mathrm{C}^*_u(X,\mathcal{E}_h)
\quad\text{and}\quad
\mathrm{C}^*_{ql}(X,\mathcal{E})^{\sigma_h}=\mathrm{C}^*_{ql}(X,\mathcal{E}_h)
\]
(see Theorems \ref{Thm.uRa.h.Points.Cont.Substructure} and \ref{Thm.qla.h.Points.Cont.Substructure}). In particular, the inclusion
$\mathrm{C}^*_{ql}(X,\mathcal{E})\subseteq \mathrm{CP}(X,\mathcal{E})$ holds for
every coarse space; interestingly, the inclusion can be strict for
non-metrizable u.l.f.\ coarse structures (Remark \ref{RemarkContPointsCoarseSpaces}), so that the
equality $\mathrm{C}^*_{ql}(X)=\mathrm{CP}(X)$ in Theorem \ref{thmB} is a
genuinely metric phenomenon.

\section{Preliminaries}\label{Section.Preliminaries}

While we have introduced most of the main definitions needed for this paper in the introduction, a couple of them were left out. This   short section takes care of this.

 \subsection{Coarse geometry}\label{SubsectionCoarseSpaces}
Let us briefly introduce the notion of coarse spaces, coarse embeddings, and explain how metric spaces are viewed as coarse spaces. For a detailed treatment of the subject, we refer the reader to the excellent monograph \cite{RoeBook} --- for coarse geometry in the more restrictive context of metric spaces, see \cite{NowakYuBook}.

Coarse spaces are abstractions of metric spaces which aim to code large scale notions of the spaces such as boundedness properties  but ignore all small scale phenomena. For this, let $X$ be a set and $\cE $ be a collection of subsets of $X\times X$. We say that $\cE$ is a \emph{coarse structure} if
\begin{enumerate}
    \item $\Delta_X=\{(x,x)\in  X^2\mid x\in X\}\in \cE$,
    \item $F\in \cE$ for all $F\subseteq E$ where $E\in \cE$,
    \item $E\cup F\in \cE$ for all $E,F\in \cE$,
    \item $\{(y,x)\in X^2\mid (x,y)\in E\}\in \cE$ for all $E\in \cE$, and
    \item $\{(x,y)\in X^2\mid \exists z\in X,\ \text{ such that }\ (x,z)\in E\ \text{ and }\ (z,y)\in F\}\in \cE$ for all $E,F\in \cE$.
\end{enumerate}
In this case, we call the pair $(X,\cE)$ a \emph{coarse space}. The quintessential example of a coarse space is a metric space: if $(X,d)$ is a metric space, then
\[\cE_d=\left\{E\subseteq  X^2\mid \sup_{(x,y)\in E}d(x,y)<\infty\right\}\]
is a coarse structure on $X$. In general, a coarse structure $\cE$  on $X$ is \emph{metrizable} if there is a metric $d$ on $X$ such that $\cE=\cE_d$. Viewing a metric space as a coarse space helps to interpret coarse spaces better: for a given symmetric $E\in \cE$ containing $\Delta_X$,
\[E_x=\{y\in X\mid (x,y)\in E\}\]
should be interpreted as the ``coarse version'' of a ball (of finite radius) centered at $x$. We then define a   coarse space $(X,\cE)$ to be  \emph{uniformly locally finite} (abbreviated as \emph{u.l.f.}) if
\[\sup_{x\in X}|E_x|<\infty\]
for all symmetric $E\in \cE$.

In the category of coarse spaces, the morphisms are the \emph{coarse maps}: a map $f\colon (X,\cE)\to (Y,\cF)$ is \emph{coarse} if
\[(f\times f)[\cE]\subseteq \cF,\]
i.e., if for all $E\in \cE$ there is $F\in \cF$ such that
\[(x,y)\in E\ \text{ implies }\ (f(x),f(y))\in F.\]
If the target space is metric, say $(Y,d_Y)$,  this can be quantified as in the introduction: $f\colon (X,\cE)\to (Y,d_Y)$ is coarse if
\[\omega_f(E)=\sup\left\{d_Y(f(x),f(z))\mid  (x,z)\in E\right\}<\infty\]
for all $E\in \cE$. If the domain is also metric, say $(X,d_X)$, we simply  write  
\[\omega_f(r)=\sup\left\{d_Y(f(x),f(z))\mid d_X (x,z)\leq r \right\}\]
for all $r\geq 0$.  Throughout the paper, we will only consider coarse maps from a coarse space   into $\R$. The real line is always considered with its standard metric.

Finally, we say that coarse spaces $(X,\cE)$ and $(Y,\cF)$ are \emph{coarsely equivalent} if there are coarse maps $f\colon X\to Y$ and $g\colon Y\to X$ such that $g\circ f$ and $f\circ g$ are \emph{close} to the identity maps $\mathrm{Id}_X$ and $\mathrm{Id}_Y$, respectively, i.e., if
\[\{(x,g(f(x)))\in X^2\mid x\in X\}\in \cE\ \text{ and }\ \{(y,f(g(y)))\in Y^2\mid y\in Y\}\in \cF.\]
If, moreover, $f$ can be taken to be bijective, then $X$ and $Y$ are \emph{bijectively coarsely equivalent}.

\subsection{Uniform Roe and quasi-local algebras} While these were defined in the introduction for metric spaces, we now properly define them in the more general context of coarse spaces. 

Let $(X,\cE)$ be a coarse space. Every operator $a\in  \cB(\ell_2(X))$ is seen as an $X$-by-$X$ matrix and its \emph{support} is defined as 
\[\supp(a)=\{(x,y)\in X^2\mid \langle a\delta_y,\delta_x\rangle\neq 0\}.\]
The operator $a$ is said to have \emph{controlled propagation} if $\supp(a)\in \cE$ --- if we need to emphasize the coarse structure, we say that the operator $a$ is \emph{$\cE$-controlled}. In the case of a metric space $(X,d)$, we also say that $a$ has \emph{finite propagation} and write
\[\propg(a)=\sup\{d(x,y)\mid \langle a\delta_y,\delta_x\rangle\neq 0\}.\]

\begin{definition}
    Let $(X,\cE)$ be a coarse space. The \emph{uniform Roe algebra of $X$}, denoted by $\cstu(X,\cE)$, is the norm closure of the set of all operators in $\cB(\ell_2(X))$ with controlled propagation. We simply write  $\cstu(X)$ if $\cE$ is clear from the context.
    \end{definition}

Given a coarse space $(X,\cE)$, $\eps>0$ and $E\in \cE$, we say that $a$ is \emph{$(\eps,E)$-quasi-local} if 
for all $A,B\subseteq X$,
\[(A\times B)\cap E=\emptyset\  \text{ implies }\ \|\chi_A a\chi_B\|\leq \eps.\]
If for every $\eps>0$ there is $E\in \cE$ such that $a$ is $(\eps,E)$-quasi-local, then $a$ is called \emph{quasi-local}. Notice that, in the case of a metric space $(X,d)$, this can be translated in terms of \emph{$(\eps,r)$-quasi-locality} meaning that 
for all $A,B\subseteq X$,
\[d(A,B)\geq r \  \text{ implies }\ \|\chi_A a\chi_B\|\leq \eps.\]
\begin{definition}
    Let $(X,\cE)$ be a coarse space. The set of all quasi-local operators in $\cB(\ell_2(X))$ forms a \cstar-algebra called the   \emph{quasi-local algebra of $X$}, denoted by $\cstql(X,\cE)$. If $\cE$ is clear from the context, we simply write $\cstql(X)$.
\end{definition}

It is completely straightforward that, for any coarse space $(X,\cE)$, we have  $\cstu(X)\subseteq \cstql(X)$.

\subsection{Coarse disjoint unions and expander graphs}\label{Subsection.Coarse.Disj.Union}

Let $(X_n,d_n)_n$ be a sequence of finite metric spaces. A metric space $(X,d)$ is said to be a \emph{coarse disjoint union of $(X_n)_n$} if  $X=\bigsqcup_nX_n$ and the metric $d$ satisfies that
\begin{enumerate}
    \item\label{Item.Metric.p.cdu.1} $d\restriction X_n\times X_n=d_{n}$ for all $n\in\N$, and
    \item \label{Item.Metric.p.cdu.2} $d(X_n,X_m)\to \infty$ as $n+m\to \infty$ with $n\neq m$.
\end{enumerate}
We emphasize here that, while coarse disjoint unions are not uniquely defined from the metric point of view (this justifies the indefinite article used in its definition), they are uniquely defined coarsely. Precisely, any two coarse disjoint unions of $(X_n)_n$ are bijectively coarsely equivalent to each other.

Among coarse disjoint unions, expander graphs will be of utmost importance. Recall that, for $k\in \N$ and $\gamma>0$, a finite (undirected) graph $G=(V,E)$ is a \emph{$(k,\gamma)$-expander} if every vertex is incident to at most $k$ edges, and for all $A\subseteq V$ we have that
\[|A|\leq |V|/2 \text{ implies } |\partial A|\geq \gamma|A|,\]
where $\partial A=\{v\in V\setminus A\mid \exists u\in A, \ (v,u)\in E\}$. A $(k,\gamma)$-expander is automatically connected. Identifying it with its set of vertices, we view it  as a metric space  endowed with the shortest path metric. We say that a metric space $X$ is a \emph{coarse disjoint union of expander graphs} if there are $k\in\N$ and $\gamma>0$ such that $X$ is a coarse disjoint union of a sequence $(X_n)_n$ such that each $X_n$ is a $(k,\gamma)$-expander and $\lim_n|X_n|=\infty$. By this definition, as $k$ is taken to be independent of $n$, coarse disjoint unions of expander graphs are automatically  uniformly locally finite. For further details on expander graphs, we refer to the monograph \cite{LubotzkyBook2010}.

We single out now a property of expander graphs which will be essential for us. In fact, this will be the only property of a  coarse disjoint union of expander graphs which we will actually need for our theorems. Precisely, given a coarse disjoint union $X=\bigsqcup_nX_n$ of $(k,\gamma)$-expanders $(X_n)_n$, it is straightforward that there is $\kappa>1$ such that 
for all $n\in\N$ and all $A,B\subseteq X_n$ we have 
\begin{equation*}
\min\left\{\frac{|A|}{|X_n|},\frac{|B|}{|X_n|}\right\}\leq \kappa^{-\frac{d(A,B)}{2}}.
\end{equation*}
(see  \cite{LiNowakSpakulaZhang2021GGD} for  \emph{asymptotic expanders}).

\subsection{Weak integral}

 Throughout the paper, we will repeatedly integrate functions taking values in some $\cB(H)$. The integral used will be the \emph{weak integral} and we  quickly recall its definition here. Suppose $(\Sigma,\mu)$ is a measure space, $H$ is a Hilbert space,  and $f\colon \Sigma\to \cB(H)$ is measurable. Then, for each $\xi,\zeta\in H$, the map
 \[\sigma\in\Sigma\mapsto \langle f(\sigma)\xi,\zeta\rangle\in \C\]
 is also measurable and, assuming furthermore that this map is integrable,
 \[(\xi,\zeta)\in H\times H\mapsto \int_\Sigma\langle f(\sigma)\xi,\zeta\rangle d\mu(\sigma) \in \C\]
 is a sesquilinear form. If, moreover, this sesquilinear form is bounded, then there is a unique operator, say $T\in \cB(H)$, such that
 \[\langle T\xi,\zeta\rangle=\int_\Sigma\langle f(\sigma)\xi,\zeta\rangle d\mu(\sigma) \]
 for all $\xi,\zeta$. The \emph{weak integral of $f$} is defined as
 \[\int_\Sigma f d\mu =T.\]

\section{Algebra of continuity points of a diagonal flow}\label{SectionAlgContPoint}

In this section, we study diagonal  flows given by arbitrary maps $h\colon X\to \R$, where $X$ is simply a set. No a priori  geometric assumptions are made on $X$. Instead,  each  map $h$ induces a pseudo-metric
\begin{align*}
    d_h\colon  X\times X &\to [0,\infty)\\
    (x,y) &\mapsto |h(x)-h(y)|
\end{align*}
and we show that the \cstar-algebra of continuity points of $\sigma_h$ is precisely the uniform Roe algebra $\cstu(X,d_h)$ --- recall, as mentioned in the introduction, the uniform Roe algebra with respect to a pseudo-metric is defined in the exact same way. The main result proved in this section is Theorem \ref{thmA}.

 Before proving Theorem \ref{thmA}, we introduce some notation which will be used not only here but also in Section \ref{SectionSubalgebrasofURAQLContPOint}; this follows ideals in \cite[Section 2.2]{EwertMeyer2019CMP}.   Let $X$ be a set and $h\colon X\to \R$ be a map. By Stone's theorem, the one-parameter group of unitaries $t\in \R\mapsto e^{ith}\in \cB(\ell_2(X))$ is strongly continuous in the sense that
\[t\in \R\mapsto e^{ith}\xi\in \ell_2(X)\]
is continuous for all $\xi\in \ell_2(X)$. As a consequence, the map
\[t\in \R\mapsto \langle \sigma_{h,t}(a)\xi,\zeta\rangle\in \C\] is continuous for all $\xi,\zeta\in\ell_2(X)$.
Therefore, for each $f\in L_1(\R)$ and each $a\in \cB(\ell_2(X))$, the sesquilinear form
\[(\xi,\zeta)\in \ell_2(X)\times \ell_2(X)\mapsto \int_\R f(t)\langle \sigma_{h,t}(a)\xi,\zeta\rangle dt\]
is well-defined and bounded; here we are also using that $ \sigma_{h,t}$ is norm preserving. We define    $\Theta_{h,f}(a)$ to be the weak integral \begin{equation}\label{Eq.Theta.formula}\Theta_{h,f}(a)=\int_\R f(t) \sigma_{h,t}(a)  d\lambda(t),
\end{equation}
where $\lambda$ denotes the Lebesgue measure on $\R$.
 Clearly,
\begin{equation}\label{Eq.BoundNormThetafa}\|\Theta_{h,f}(a)\|\leq \|f\|_1\|a\|.
\end{equation}

For each $f\in L_1(\R)$ and each $a\in \cB(\ell_2(X))$, the operator $\Theta_{h,f}$ acts on $a$ as a Schur multiplier. Precisely, letting
\[\hat f(\theta)=\int_\R f(t)e^{it\theta}d\lambda(t)\ \text{ for all }\ \theta\in \R,\]
a straightforward computation gives that
\begin{align}\label{Eq.Theta.Schur.mult}
\langle \Theta_{h,f}(a)\delta_y,\delta_x\rangle& = \int_\R f(t)\langle e^{ith}ae^{-ith}\delta_y,\delta_x\rangle d\lambda(t)  \\
&= \int_\R f(t) e^{it(h(x)-h(y))} d\lambda(t) \cdot\langle a\delta_y,\delta_x\rangle\notag \\
&=\hat f(h(x)-h(y))\cdot \langle a\delta_y,\delta_x\rangle\notag
\end{align}
for all $x,y\in X$.
So, $\Theta_{h,f}$ is simply the Schur multiplication by the $X$-by-$X$ matrix \[[\hat f(h(x)-h(y))]_{x,y\in X}.\]

  For each $s>0$, let $K_s\colon \R\to [0,\infty)$ be the Fejér kernel for $\R$, i.e.,
\begin{equation}\label{Eq.Fejér.kernel}
K_s(t)=\frac{s}{2\pi}\left(\frac{\sin(s t/2)}{s t /2}\right)^2
\end{equation}
for all $t\in \R$ (we refer to the monograph \cite{Katznelson2004} for more on Fejér kernels). Direct computations give that
\begin{equation}\label{Eq.2.03.Aug.26}\|K_s\|_1=1\ \text{ and }\ \hat K_s(\theta)=\max\left\{0,1-\frac{|\theta|}{s}\right\} \end{equation}
for all $s>0$ and all $\theta\in \R$.

\begin{lemma}
\label{Lemma.support.ThetaKsa}
Let $X$ be a set, $h\colon X\to \R$ be a map, and $a\in \cB(\ell_2(X))$. Then, for each $s>0$, $\Theta_{h,K_s}(a)$ has $d_h$-propagation at most $s$.
\end{lemma}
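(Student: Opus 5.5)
The plan is to read off the support of $\Theta_{h,K_s}(a)$ from the Schur multiplier description \eqref{Eq.Theta.Schur.mult}, combined with the explicit form of $\hat K_s$ given in \eqref{Eq.2.03.Aug.26}. No approximation is needed; the statement should follow from an entrywise computation.

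First, note that $K_s\in L_1(\R)$ with $\|K_s\|_1=1$. Hence $\Theta_{h,K_s}(a)$ is a well-defined bounded operator, with $\|\Theta_{h,K_s}(a)\|\le\|a\|$ by \eqref{Eq.BoundNormThetafa}. Next, fix $x,y\in X$. By \eqref{Eq.Theta.Schur.mult},
\[
\langle \Theta_{h,K_s}(a)\delta_y,\delta_x\rangle=\hat K_s(h(x)-h(y))\,\langle a\delta_y,\delta_x\rangle .
\]
Suppose $d_h(x,y)=|h(x)-h(y)|>s$. Then \eqref{Eq.2.03.Aug.26} gives
\[
\hat K_s(h(x)-h(y))=\max\{0,1-|h(x)-h(y)|/s\}=0,
\]
so the matrix entry vanishes. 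Therefore $\supp(\Theta_{h,K_s}(a))\subseteq\{(x,y)\mid d_h(x,y)\le s\}$, which says exactly that the $d_h$-propagation is at most $s$.

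The only step with real content is the formula for $\hat K_s$, namely that the Fourier transform of the Fejér kernel is the triangle function supported on $[-s,s]$. I expect that verifying this directly would be the main obstacle, but the paper states it in \eqref{Eq.2.03.Aug.26} and I take it as given. If one wanted to check it, the cleanest route is to write $K_s$ as $\frac{1}{2\pi s}$ times the inverse Fourier transform of $\chi_{[-s/2,s/2]}*\chi_{[-s/2,s/2]}$. The Schur multiplier identity itself only needs the weak integral to be evaluated on matrix coefficients, which is immediate from its definition, since $e^{-ith}\delta_y=e^{-ith(y)}\delta_y$.
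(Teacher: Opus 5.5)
Your proposal is correct and follows the paper's proof: both read off the entries of $\Theta_{h,K_s}(a)$ from the Schur multiplier formula \eqref{Eq.Theta.Schur.mult}. Both then observe that $\hat K_s(h(x)-h(y))=0$ whenever $d_h(x,y)>s$, which forces the corresponding matrix entry to vanish.
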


\begin{proof}
Given $s>0$, we know that $\Theta_{h,K_s}$ is the Schur multiplication by the $X$-by-$X$ matrix $[\hat{K_s}(h(x)-h(y))]_{x,y}$ and, by the formula for $\hat K_s$ in \eqref{Eq.2.03.Aug.26}, we have that $\hat K_s(\theta)=0$ if $|\theta|\geq s$. Therefore, if $x,y\in X$ are such that \[d_h(x,y)=|h(x)-h(y)|>s,\] the entry $\langle \Theta_{h,K_s}(a)\delta_y,\delta_x\rangle$ must be zero as desired.
\end{proof}

\begin{lemma}\label{Lemma.ThetaKs(a).tends.to.a}
    Let $X$ be a set and $h\colon X\to \R$ be a map. If $a\in \cB(\ell_2(X))^{\sigma_h}$, then \[a=\lim_{s\to \infty}\Theta_{h,K_s}(a).\]
\end{lemma}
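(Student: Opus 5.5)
The plan is the standard approximate-identity argument for the Fejér kernel, carried out with weak integrals. Since $\|K_s\|_1=1$ and $K_s\geq 0$ by \eqref{Eq.2.03.Aug.26}, we may write
\[
\Theta_{h,K_s}(a)-a=\int_\R K_s(t)\big(\sigma_{h,t}(a)-a\big)\,d\lambda(t)
\]
as a weak integral. Testing against unit vectors $\xi,\zeta\in\ell_2(X)$ and taking the supremum then gives
\[
\|\Theta_{h,K_s}(a)-a\|\leq \int_\R K_s(t)\,\|\sigma_{h,t}(a)-a\|\,d\lambda(t).
\]
For this bound to make sense, the map $t\mapsto\|\sigma_{h,t}(a)-a\|$ should be measurable. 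This holds because $a\in\cB(\ell_2(X))^{\sigma_h}$, so $t\mapsto\sigma_{h,t}(a)$ is norm continuous and the integrand is a continuous function.

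Next, fix $\eps>0$ and use continuity at $t=0$ to choose $\delta>0$ with $\|\sigma_{h,t}(a)-a\|\leq\eps$ whenever $|t|\leq\delta$. Split the integral into $|t|\leq\delta$ and $|t|>\delta$.

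On $|t|\leq\delta$, the contribution is at most $\eps\int K_s=\eps$.

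On $|t|>\delta$, use the trivial bound $\|\sigma_{h,t}(a)-a\|\leq 2\|a\|$, since each $\sigma_{h,t}$ is isometric. This reduces matters to showing
\[
\int_{|t|>\delta}K_s(t)\,d\lambda(t)\to 0 \quad\text{as } s\to\infty.
\]
From the explicit formula \eqref{Eq.Fejér.kernel},
\[
K_s(t)=\frac{2\sin^2(st/2)}{\pi s t^2}\leq \frac{2}{\pi s t^2},
\]
so the tail integral is at most
\[
\frac{2}{\pi s}\int_{|t|>\delta}t^{-2}\,d\lambda(t)=\frac{4}{\pi s\delta},
\]
which tends to $0$ as $s\to\infty$. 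Hence $\limsup_{s\to\infty}\|\Theta_{h,K_s}(a)-a\|\leq\eps$, and since $\eps$ was arbitrary, $a=\lim_{s\to\infty}\Theta_{h,K_s}(a)$.

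The only delicate point is the norm bound for the weak integral. For unit vectors $\xi,\zeta$,
\[
\Big|\int_\R K_s(t)\langle(\sigma_{h,t}(a)-a)\xi,\zeta\rangle\,d\lambda(t)\Big|\leq\int_\R K_s(t)\,\|\sigma_{h,t}(a)-a\|\,d\lambda(t),
\]
and taking the supremum over $\xi,\zeta$ gives the norm estimate. The identity $\int_\R K_s(t)\,a\,d\lambda(t)=a$ used in the first display follows from $\|K_s\|_1=1$ together with $K_s\geq0$. Everything else is routine.
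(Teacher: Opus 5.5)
Your proof is correct and follows essentially the same route as the paper: you bound $\|\Theta_{h,K_s}(a)-a\|$ by $\int_\R K_s(t)\|\sigma_{h,t}(a)-a\|\,d\lambda(t)$, split at $\delta$ using continuity at $0$, and control the tail with $K_s(t)\leq 2/(\pi s t^2)$. This gives the same $O\big(\|a\|/(s\delta)\big)$ tail estimate as the paper.
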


\begin{proof}
As $K_s$ is positive and $\|K_s\|_1=1$  (see \eqref{Eq.2.03.Aug.26}), we have that
\begin{align}\label{Eq.1.03.Aug.26}\|\Theta_{h,K_s}(a)-a\|
\leq \int_\R K_s(t)\|\sigma_{h,t}(a)-a\|dt
\end{align}
for all $s>0$.

Let us show that \eqref{Eq.1.03.Aug.26} approaches zero when $s\to \infty$. Fix $\eps>0$. As $t\in \R\mapsto \sigma_{h,t}(a)\in \B(\ell_2(X))$ is continuous at zero, there is $\delta>0$ such that
\[\|\sigma_{h,t}(a)-a\|\leq \eps\ \text{ for all }\ t\in [-\delta,\delta].\]
Using again that $K_s$ has integral 1,   the formula of $K_s$, and the fact that  $\|\sigma_{h,t}(a)-a\|\leq 2\|a\|$  give
\begin{align*}
    \int_\R K_s(t) & \|\sigma_{h,t}(a)-a\|dt \\
    &=\int_{|t|\leq \delta} K_s(t)\|\sigma_{h,t}(a)-a\|dt+\int_{|t|>\delta} K_s(t)\|\sigma_{h,t}(a)-a\|dt
\\
&\leq \eps+\frac{4\|a\|}{s \pi}\int_{|t|>\delta}\frac{1}{t^2}dt  \\
&=\eps+\frac{8\|a\|}{s \pi \delta}.
\end{align*}
Hence, if $s$ is at least ${8\|a\|}/({\eps\pi \delta})$, this shows that \eqref{Eq.1.03.Aug.26} is at most $2\eps$. As $\eps$ was arbitrary,  we conclude that
\[\|\Theta_{h,K_s}(a)-a\|\to 0\ \text{ as } \ s\to \infty\]
    as desired.
\end{proof}

\begin{proof}  [Proof of Theorem \ref{thmA}]
Let $h\colon X\to \R$ and suppose $A\subseteq \cB(\ell_2(X))$ is a \cstar-algebra invariant under $\sigma_h$. Let us show that 
\begin{equation}\label{eq:spectral}
A^{\sigma_h}=\overline{\bigcup_{s>0}\mathrm C^s[X,d_h]\cap A},
\end{equation}
where here for each $s>0$ we consider 
\begin{equation}\label{Eq.21.sep.26.1}\mathrm C^s[X,d_h]=\{a\in \cB(\ell_2(X))\mid d_h\text{-}\propg(a)\leq s\}.\end{equation}

Fix $a\in A^{\sigma_h}$. Let  $K_s$ be the  Fejér kernel (see \eqref{Eq.Fejér.kernel}) and   $\Theta_{h,K_s}$ be as above (see \eqref{Eq.Theta.formula}). By Lemma \ref{Lemma.support.ThetaKsa} each $\Theta_{h,K_s}(a)$ has $d_h$-propagation at most $s$ and, by Lemma \ref{Lemma.ThetaKs(a).tends.to.a}, we have that
\[a=\lim_{s\to \infty}\Theta_{h,K_s}(a).\]
Therefore, in order to show that $a$ is in the right-hand side of \eqref{eq:spectral}, we only need to show that each $\Theta_{h,K_s}(a)$ is in $A$. For that, notice that,   as $a$ is in $A^{\sigma_h}$, the map
\[t\in \R\mapsto \sigma_{h,t}(a)\in A\] is continuous. Therefore,
\[t\in \R\mapsto K_s(t)\sigma_{h,t}(a)\in A\]
 is Bochner integrable, which implies that  $\Theta_{h,K_s}(a)$ is not only a weak integral, but actually a Bochner integral in this case. As $A$ is norm closed, it follows that each $\Theta_{h,K_s}(a)$ is in $A$ as desired.

We now prove the inclusion ``$\supseteq$'' in \eqref{eq:spectral}. Since $A^{\sigma_h}$ is closed,  it suffices to show that    it   contains all operators in $A$ with finite $d_h$-propagation. For that, fix $s>0$ and let $a\in A $ be an operator with propagation at most $s$. Define \[V_s=2K_{2s}-K_s,\] i.e., $V_s$ is the  de la Vall\'ee Poussin kernel. Standard computations give that
\begin{equation}\label{Eq.5.03.Aug.26}\|V_s\|_1\leq 3\ \text{ and }\   \hat V_s(t)= 1 \text{ for all }\ t\in [-s,s]\end{equation}
(see \cite{Katznelson2004} for more on this kernel). Notice that since  $a$ has $d_h$-propagation at most $s$,   \eqref{Eq.Theta.Schur.mult} and the second equality in \eqref{Eq.5.03.Aug.26} imply that   $\Theta_{h,V_s}(a)=a$.

For each $r\in \R$, consider the translation
\[V^r_s(t)=V_s(t-r)\ \text{ for all }\ t\in \R.\]
So,
\[\|V_s^r-V_s\|_1\to 0\ \text{ as } \ r\to 0.\]
Notice that, as $\sigma_{h}$ is a one-parameter group, a simple change of variables gives that
\[\sigma_{h,r}\circ \Theta_{h,V_s}=\Theta_{h,V_s^r}.\]
Hence, using  \eqref{Eq.BoundNormThetafa} and the fact that  $\Theta_{h,V_s}(a)=a$, we have
\begin{align*}
\|\sigma_{h,r}(a)-a\| &=\|\sigma_{h,r}(\Theta_{h,V_s}(a))-\Theta_{h,V_s}(a)\|\\
&=\|\Theta_{h,V_s^r}(a)-\Theta_{h,V_s}(a)\|\\
&=\|\Theta_{h,V_s^r-V_s}(a)\|\\
&\leq\|{V_s^r}-{V_s}\|_1\|a\|
\\
&\underset{r\to 0}{\longrightarrow} 0.
\end{align*}
This shows that   $a$ is a continuity point of $\sigma_h$ and, therefore, it belongs to  $A^{\sigma_h}$ as desired.
\end{proof}

\section{Subalgebras of the uniform Roe  algebra and   of the quasi-local  algebra of continuity points of a diagonal flow}\label{SectionSubalgebrasofURAQLContPOint}

In this section,  we use the methods of Section \ref{SectionAlgContPoint} in order to characterize the subalgebras of  $\cstu(X)$ and $\cstql(X)$ given by the continuity points of diagonal flows. For this to make sense, $X$ will no longer be only a set but, moreover, a coarse space endowed with a coarse structure $\cE$. Our main results here show that  these algebras of continuity points are precisely the uniform Roe and the quasi-local algebras of the largest substructure of $\cE$ making $h$  coarse (see Theorems \ref{Thm.uRa.h.Points.Cont.Substructure} and \ref{Thm.qla.h.Points.Cont.Substructure}).

\subsection{Subalgebra of the uniform Roe algebra  of continuity points of a diagonal flow} For arbitrary maps $h\colon X\to \R$ on a coarse space $(X,\cE)$, we can define the largest coarse substructure of $\cE$ with respect to which $h$ is coarse. Precisely:

\begin{definition}\label{Definition.Eh.largest.coarse.substructure}
Let $(X,\cE)$ be a coarse space and $h\colon X\to \R$ be a map. Define 
\[\cE_h=\left\{E\in \cE\mid \sup_{(x,y)\in E}|h(x)-h(y)|<\infty\right\}.\]
\end{definition}
The following proposition is   completely  straightforward and  we omit its proof.

 \begin{proposition}\label{PropLargestCoarseStrucWithhCoarse}
      Let $(X,\cE)$  be  a coarse space and $h\colon X\to\R$ be a function. Then $(X,\cE_h)$ is the largest  coarse substructure of $\cE$ such that $h\colon (X,\cE_h)\to \R$ is coarse. \qed
 \end{proposition}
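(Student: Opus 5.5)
The plan is to verify the five coarse-structure axioms for $\cE_h$ directly, then note that $h$ is coarse for $\cE_h$ by definition, and finally prove maximality. For $E\subseteq X\times X$, write
\[
\omega_h(E)=\sup_{(x,y)\in E}|h(x)-h(y)|,
\]
with the convention $\omega_h(\emptyset)=0$. Then $\cE_h=\{E\in\cE\mid \omega_h(E)<\infty\}$.

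First, the axioms. Each one holds because it holds for $\cE$ and because $\omega_h$ behaves well under the relevant operation.
\begin{enumerate}
\item $\omega_h(\Delta_X)=0$, so $\Delta_X\in\cE_h$.
\item If $F\subseteq E\in\cE_h$, then $F\in\cE$ and $\omega_h(F)\le\omega_h(E)<\infty$.
\item $\omega_h(E\cup F)=\max\{\omega_h(E),\omega_h(F)\}$.
\item The inverse relation $E^{-1}$ satisfies $\omega_h(E^{-1})=\omega_h(E)$, since $|h(x)-h(y)|$ is symmetric.
\item For the composition $E\circ F$, take $(x,y)$ with a witness $z$ such that $(x,z)\in E$ and $(z,y)\in F$. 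The triangle inequality in $\R$ gives $|h(x)-h(y)|\le|h(x)-h(z)|+|h(z)-h(y)|$, hence $\omega_h(E\circ F)\le\omega_h(E)+\omega_h(F)$.
\end{enumerate}
So $\cE_h$ is a coarse structure contained in $\cE$.

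Second, $h\colon(X,\cE_h)\to\R$ is coarse. By the metric-target criterion recalled in Section~\ref{SubsectionCoarseSpaces}, this amounts to $\omega_h(E)<\infty$ for every $E\in\cE_h$, which is exactly the defining condition.

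Third, maximality. Let $\cF\subseteq\cE$ be any coarse structure for which $h$ is coarse. For each $F\in\cF$ the same criterion gives $\omega_h(F)<\infty$, and $F\in\cE$, so $F\in\cE_h$. Hence $\cF\subseteq\cE_h$.

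No step is a real obstacle. The only point needing a moment's care is the composition axiom, and the triangle inequality settles it.
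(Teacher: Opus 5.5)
Your proof is correct, and it is exactly the routine verification the paper has in mind when it omits the proof as ``completely straightforward.'' You check the five axioms for $\cE_h$ through the behaviour of $\omega_h$, with the triangle inequality handling composition. Coarseness of $h$ on $\cE_h$ is then the defining condition, and maximality follows from the metric-target criterion for coarse maps.
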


\begin{theorem}\label{Thm.uRa.h.Points.Cont.Substructure}
    Let $(X,\cE)$ be a  coarse space and $h\colon X\to \R$ be a map. Then
    \[\cstu(X,\cE)^{\sigma_h}=\cstu(X,\cE_h).\]
\end{theorem}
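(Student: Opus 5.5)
We will apply the general form of Theorem \ref{thmA} with $A=\cstu(X,\cE)$. The first step is to check that $A$ is $\sigma_h$-invariant. For every $t$, $\sigma_{h,t}$ is conjugation by a diagonal unitary, so it does not change supports: $\supp(\sigma_{h,t}(a))=\supp(a)$. Hence it maps $\cE$-controlled operators to $\cE$-controlled operators. Since $\sigma_{h,t}$ is isometric, it also preserves their norm closure. Theorem \ref{thmA} then gives
\[
\cstu(X,\cE)^{\sigma_h}=\overline{\{a\in\cstu(X,\cE)\mid d_h\text{-}\propg(a)<\infty\}}.
\]
It remains to show that the right-hand side equals $\cstu(X,\cE_h)$.

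For the inclusion ``$\supseteq$'', let $a$ have $\supp(a)\in\cE_h$. Then $\supp(a)\in\cE$, so $a\in\cstu(X,\cE)$. By the definition of $\cE_h$, $\sup_{(x,y)\in\supp(a)}|h(x)-h(y)|<\infty$, which says exactly that $a$ has finite $d_h$-propagation. Passing to norm closures gives $\cstu(X,\cE_h)$ inside the right-hand side.

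For ``$\subseteq$'', take $a\in\cstu(X,\cE)$ with $d_h$-propagation at most $s$. Its support need not lie in $\cE$, because $a$ is only a norm limit of controlled operators. So we approximate as follows. Choose $\cE$-controlled operators $b_n\to a$. Consider $\Theta_{h,V_s}(b_n)$, with $V_s$ the de la Vallée Poussin kernel from the proof of Theorem \ref{thmA}; note that $\hat V_s$ vanishes outside $[-2s,2s]$. By \eqref{Eq.Theta.Schur.mult} this is a Schur multiple of $b_n$, so
\[
\supp(\Theta_{h,V_s}(b_n))\subseteq \supp(b_n)\cap\{(x,y)\mid |h(x)-h(y)|\le 2s\}.
\]
This set lies in $\cE$, being a subset of $\supp(b_n)$, and $h$ is bounded in variation on it, so it lies in $\cE_h$. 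Moreover $\Theta_{h,V_s}(a)=a$, as shown in the proof of Theorem \ref{thmA}. Using \eqref{Eq.BoundNormThetafa},
\[
\|\Theta_{h,V_s}(b_n)-a\|=\|\Theta_{h,V_s}(b_n-a)\|\le 3\|b_n-a\|\to0,
\]
so $a\in\cstu(X,\cE_h)$.

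The main obstacle is this last step: a norm limit of controlled operators must be approximated by operators that are simultaneously $\cE$-controlled and $h$-bounded. It is resolved by the fact that $\Theta_{h,V_s}$ is a bounded Schur multiplier that fixes $a$ and only shrinks supports. Alternatively, one could apply $\Theta_{h,K_{s'}}$ directly to an arbitrary continuity point, which is the same idea.
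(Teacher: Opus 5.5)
Your proof is correct and is essentially the paper's argument: both rest on the fact that $\Theta_{h,f}$, with $\hat f$ compactly supported, is a bounded Schur multiplier that cuts the support of an $\cE$-controlled approximant down to a set in $\cE_h$. The only difference is that the paper takes the route you mention at the end: it applies $\Theta_{h,K_s}$ directly to the continuity point and to its $\cE$-controlled approximants, via Lemmas \ref{Lemma.support.ThetaKsa} and \ref{Lemma.ThetaKs(a).tends.to.a}, so for that inclusion it needs neither the $\sigma_h$-invariance check, nor the general form of Theorem \ref{thmA}, nor the de la Vall\'ee Poussin kernel.
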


\begin{proof}
    We start with the inclusion ``$\supseteq$''. Let $a\in \cstu(X,\cE_{h})$. Then, by the definition of $\cE_h$, it is immediate that  $a\in \cstu(X,d_h)$. By Theorem \ref{thmA}, $a$ is a continuity point for $\sigma_{h}$. Therefore,   as  $\cE_h\subseteq \cE$, we have that $\cstu(X,\cE_h)\subseteq \cstu(X,\cE)$ and  this shows that     $a\in\cstu(X,\cE)^{\sigma_h} $.

For the inclusion ``$\subseteq$'', suppose now that $a\in \cstu(X,\cE)^{\sigma_h}$. By  Lemma \ref{Lemma.ThetaKs(a).tends.to.a},
\[a=\lim_{s\to \infty}\Theta_{h,K_s}(a).\]
So, we only need to show that each $\Theta_{h,K_s}(a)$ is in $ \cstu(X,\cE_h)$. Fix $s>0$ and let  $(a_n)_n$ be a sequence of operators of $\cE$-controlled propagation converging to $a$. So,
\[\Theta_{h,K_s}(a)=\lim_n\Theta_{h,K_s}(a_n)\]
and we are left to notice that each $\Theta_{h,K_s}(a_n)$ is in $\cstu(X,\cE_h)$.   Fix $n\in\N$. By Lemma \ref{Lemma.support.ThetaKsa}, each $\Theta_{h,K_s}(a_n)$ has $d_h$-propagation at most $s$. Since $\Theta_{h,K_s}$ acts on $a_n$ by a Schur multiplication, it cannot increase support, so  we have
\[\supp\left(\Theta_{h,K_s}(a_n)\right)\subseteq \supp(a_n)\cap \{(x,y)\in X^2\mid d_h(x,y)\leq s\}\in \cE_h.\]
This shows that each $\Theta_{h,K_s}(a_n)$ has controlled propagation with respect to $\cE_h$ and we are done.
\end{proof}

\subsection{Subalgebra of the  quasi-local algebra of  continuity points of a diagonal flow} We will now need Theorem \ref{thmA} in order to prove the following result.

\begin{theorem}\label{Thm.qla.h.Points.Cont.Substructure}
    Let $(X,\cE)$ be a coarse space and $h\colon X\to \R$ be a map. Then
    \[\cstql(X,\cE)^{\sigma_h}=\cstql(X,\cE_h).\]
\end{theorem}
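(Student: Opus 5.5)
We prove the two inclusions separately, using the Fejér-kernel machinery of Section \ref{SectionAlgContPoint}.

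\emph{Inclusion ``$\supseteq$''.} Let $a\in\cstql(X,\cE_h)$. Since $\cE_h\subseteq\cE$, $a$ is $\cE$-quasi-local, so it only remains to check continuity under $\sigma_h$. By Theorem \ref{thmA} it suffices to approximate $a$ in norm by operators of finite $d_h$-propagation. Fix $\eps>0$ and pick $E\in\cE_h$ with $a$ $(\eps,E)$-quasi-local, and let $s_0=\sup_{(x,y)\in E}|h(x)-h(y)|<\infty$. Partition $\R$ into intervals $I_k=[k s_0',(k+1)s_0')$ with $s_0'>s_0$, and set $P_k=\chi_{h^{-1}(I_k)}$. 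Split $a=\sum_{|k-l|\le 1}P_kaP_l+\sum_{|k-l|\ge2}P_kaP_l$. The first sum has $d_h$-propagation at most $2s_0'$. For the second, write it as $\sum_{j\geq 2}(b_j+b_{-j})$ with $b_j=\sum_k P_kaP_{k+j}$, which is not directly small. So instead use the standard trick: split the index set into even/odd blocks, i.e.\ $\sum_{|k-l|\ge2}P_kaP_l=\chi_A a\chi_B+\dots$ fails as a single pair. The cleaner route: apply $\Theta_{h,K_s}$ and estimate $\|a-\Theta_{h,K_s}(a)\|$ directly via $\|\sigma_{h,t}(a)-a\|$ for small $t$. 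For $|t|$ small, write $e^{ith}=\sum_k e^{ith}P_k$; on each block $h$ varies by at most $s_0'$, so $e^{ith}$ is within $|t|s_0'$ of $D_t=\sum_k e^{itks_0'}P_k$. Then $\|\sigma_{h,t}(a)-D_taD_t^*\|\le 2|t|s_0'\|a\|$, and $D_taD_t^*-a$ has the same off-diagonal block structure: its part on $|k-l|\le1$ has norm $\lesssim |t|s_0'\|a\|$ (three band-diagonal pieces, each a block-diagonal Schur-type multiplier with coefficient $e^{it(k-l)s_0'}-1$ of size $\le|t|s_0'$), while the part on $|k-l|\ge2$ is $\sum_{|k-l|\ge2}(e^{it(k-l)s_0'}-1)P_kaP_l$. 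The main obstacle is bounding this last term by $O(\eps)$.

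For that I would use quasi-locality more carefully: for $|k-l|\ge 2$, $(h^{-1}(I_k)\times h^{-1}(I_l))\cap E=\emptyset$, so any $\chi_A a\chi_B$ with $A,B$ unions of blocks whose indices are pairwise $\ge2$ apart has norm $\le\eps$. Decompose $\Z$ into residues mod $3$ blocks-of-blocks; then for a fixed gap $j$ the operator $\sum_k P_kaP_{k+j}$ restricted to $k$ in a suitable residue class mod $2|j|$ is of the form $\chi_Aa\chi_B$ compressed to orthogonal pieces, giving norm $\le\eps$ per class, but summing over $j$ diverges. Hence rather than bounding in one shot, I would iterate with coarser partitions: replace $s_0'$ by $Ns_0'$ (still fine, $E$ is still inside the near-diagonal) and use a smooth Fourier multiplier $\hat f$ equal to $1$ near $0$ supported in $[-N s_0', N s_0']$ applied to $a$; the error $a-\Theta_{h,f}(a)$ is supported where $d_h>Ns_0'\ge s_0$, i.e.\ off $E$. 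The key claim to prove is then: if $a$ is $(\eps,E)$-quasi-local and $g$ is a Schur multiplier of the form $\hat\phi(h(x)-h(y))$ with $\phi\in L_1$ and $\hat\phi$ vanishing on $[-s_0,s_0]$, then $\|\Theta_{h,\phi}(a)\|\le C\|\phi\|_{1}'\eps$ for a suitable norm. I would try to prove this by writing $\hat\phi(\theta)=\int \psi(u)\,\mathbf 1[\theta>u]\,du$-type decompositions into half-line indicators, since $\chi_{\{h>u+s_0\}}a\chi_{\{h\le u\}}$ has norm $\le\eps$ by quasi-locality; with $\hat\phi=1-\hat V$ for a de la Vallée Poussin type kernel this gives a bound like $\eps\cdot\log$-factor, which I would control by choosing $\hat\phi$ with integrable derivative. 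This yields $\|a-\Theta_{h,V}(a)\|\lesssim\eps$ with $\Theta_{h,V}(a)$ of finite $d_h$-propagation, so $a\in\cstu(X,d_h)=\cB(\ell_2(X))^{\sigma_h}$.

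\emph{Inclusion ``$\subseteq$''.} Let $a\in\cstql(X,\cE)^{\sigma_h}$. By Lemma \ref{Lemma.ThetaKs(a).tends.to.a}, $a=\lim_s\Theta_{h,K_s}(a)$, and $\cstql(X,\cE_h)$ is closed, so it suffices that $b=\Theta_{h,K_s}(a)\in\cstql(X,\cE_h)$. Given $\eps$, take $E\in\cE$ with $a$ $(\eps,E)$-quasi-local and set $E'=E\cap\{d_h\le s\}\in\cE_h$. If $(A\times B)\cap E'=\emptyset$, split $A\times B$: pairs at $d_h$-distance $>s$ contribute nothing since $b$ has $d_h$-propagation $\le s$ (Lemma \ref{Lemma.support.ThetaKsa}); the remaining pairs avoid $E$. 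Since $\chi_Ab\chi_B=\Theta_{h,K_s}(\chi_Aa\chi_B)$, and using a partition of $h$-values into intervals of length $s$ as above to reduce to pieces $\chi_{A_k}a\chi_{B_l}$, $|k-l|\le1$, with $(A_k\times B_l)\cap E=\emptyset$ modulo the $d_h$ cutoff, one gets $\|\chi_Ab\chi_B\|\lesssim\eps$ using $\|\Theta_{h,K_s}\|\le1$ and block-orthogonality (three band pieces, each of norm $\le\eps$). This gives $b\in\cstql(X,\cE_h)$.
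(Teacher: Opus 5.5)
\paragraph{The gap: the inclusion ``$\supseteq$''.} You correctly reduce, via Theorem \ref{thmA}, to approximating an $(\eps,E)$-quasi-local $a$ with $E\in\cE_h$ by operators of finite $d_h$-propagation. But you never produce such an approximation. Your ``key claim'' $\|\Theta_{h,\phi}(a)\|\le C\eps$ is left unproved, and the route you sketch cannot prove it.

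Quasi-locality only controls compressions $\chi_Aa\chi_B$ with $(A\times B)\cap E=\emptyset$, such as your cuts $\chi_{\{h>v+s_0\}}a\chi_{\{h\le v\}}$. A difference threshold $\mathbf 1[h(x)-h(y)>u]$ is not such a cut. To write the translation-invariant multiplier $\hat\phi(h(x)-h(y))$ in terms of cuts, you must integrate over all translates $v\in\R$; for example, $\int_\R\mathbf 1[h(y)\le v<h(x)-u]\,dv=(h(x)-h(y)-u)_+$. The resulting bound is then $\int_\R\eps\,dv=\infty$. This is exactly the divergence you already met when summing over the gaps $j$. Choosing $\hat\phi$ with integrable derivative does not help: integrability in $u$ is not the problem, the integral over all translates is.

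Note also that your claim is considerably stronger than what the paper establishes. It asks for an $O(\eps)$ error at $d_h$-propagation comparable to $\omega_h(E)$, with no $\|a\|$ term. Lemma \ref{lem:smoothing} only gives an $O(\sqrt\eps)$ error at propagation of order $\omega_h(E)/\sqrt\eps$.

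\paragraph{The missing idea.} Work only with \emph{bounded} functions of $h$ that have small variation along $E$. Such a function has finitely many level sets, so the gap sum is finite. This is Ozawa's estimate, Lemma \ref{lem:commutator}: if $0\le f\le1$ and $|f(x)-f(y)|\le\delta$ for $(x,y)\in E$, then $\|[f,a]\|\le 4\delta\|a\|+2\delta^{-1}\eps$.

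The paper's ``$\supseteq$'' is Theorem \ref{Thm.Quasi-LocalContainedContinuityPoints} applied to $(X,\cE_h)$, which rests on Lemma \ref{lem:smoothing}:
\begin{itemize}
\item Take $g_k=\varphi_k\circ h$ with $\sum_k g_k^2=1$ and each $\varphi_k$ being $\frac{\pi}{2L}$-Lipschitz. Then $b=\SOTh\sum_k g_kag_k$ has $d_h$-propagation at most $2L$.
\item Rademacher averaging gives $\|a-b\|\le\sup_\sigma\|[a,f_\sigma]\|$, where $f_\sigma=\sum_k r_k(\sigma)g_k$ is bounded by $2$.
\item Lemma \ref{lem:commutator} with $\delta=\sqrt\eps$ then gives $\|a-b\|\le 24\sqrt\eps$ for contractions.
\end{itemize}

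Your own $D_t$ computation closes the same way. You have $\|\sigma_{h,t}(a)-a\|=\|[e^{ith},a]\|\le\|[\cos(th),a]\|+\|[\sin(th),a]\|$. After rescaling to $[0,1]$ via $(1+\cos(th))/2$ and $(1+\sin(th))/2$, these are bounded functions varying by at most $|t|\omega_h(E)/2$ along $E$. Lemma \ref{lem:commutator} then gives $\|\sigma_{h,t}(a)-a\|\le 16\sqrt\eps\|a\|+8\sqrt\eps$ whenever $|t|\,\omega_h(E)\le 2\sqrt\eps$, which is continuity at $0$.

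\paragraph{The inclusion ``$\subseteq$''.} Your argument is essentially the paper's, with one correction needed. With $E'=E\cap\{d_h\le s\}$, the blocks $A_k\times B_l$ with $|k-l|\le1$ can contain pairs with $s<d_h<2s$ that lie in $E$. So the bound $\|\chi_{A_k}a\chi_{B_l}\|\le\eps$ is not justified. Take $E'=E\cap\{d_h\le 2s\}$, as the paper does. Then contractivity of $\Theta_{h,K_s}$ and the three block-orthogonal band pieces give $\|\chi_Ab\chi_B\|\le 3\eps$.
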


The following lemma is
 \cite[Lemma 6]{Ozawa2023} and it first appeared in the proof of \cite[Theorem 2.8]{SpakulaTikuisis2019}. While in both these articles the spaces under consideration were metrizable, their proof works for non-metrizable spaces without modifications.

\begin{lemma}\emph{(}\cite[Lemma 6]{Ozawa2023}\emph{).}
Let $(X,\cE)$ be a  coarse space, $\eps,\delta>0$, $E\in \cE$,   $a\in\B(\ell_2(X))$ be $(\eps,E)$-quasi-local, and let $f\in\ell_\infty(X)$ be so that   $0\le f\leq 1$ and $|f(x)-f(y)|\le\delta$ for all $x,y\in X$ with $(x,y)\in E$. Then
\begin{equation*}
\|[f,a]\|\le 4\delta\|a\|+2\delta^{-1}\varepsilon .
\end{equation*} \label{lem:commutator}
\end{lemma}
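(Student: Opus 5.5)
The plan is the standard ``Lipschitz cut-off'' argument. The cut-off is built from distance-type functions adapted to $E$, rather than from a metric.

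Assume $a$ is $(\eps,E)$-quasi-local. We may enlarge $E$ to be symmetric and contain $\Delta_X$: enlarging $E$ preserves quasi-locality, since fewer pairs $(A,B)$ then satisfy $(A\times B)\cap E=\emptyset$. It does, however, strengthen the hypothesis on $f$, so we must be careful. It is cleaner to keep $E$ as given and only use $E$ and its transpose where needed.

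Set $n=\lceil\delta^{-1}\rceil$ and slice $[0,1]$ into the level sets
\[X_k=\{x\mid f(x)\in((k-1)\delta,k\delta]\}, \qquad k=0,\dots,n,\]
so that $f=\sum_k f\chi_{X_k}$. Compare $f$ with the step function $g=\sum_k k\delta\,\chi_{X_k}$. Since $\|f-g\|\le\delta$, we get $\|[f,a]\|\le\|[g,a]\|+2\delta\|a\|$.

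Next compute the commutator of the step function blockwise:
\[[g,a]=\sum_{k,l}(k-l)\delta\,\chi_{X_k}a\chi_{X_l}.\]
Split this sum into the near-diagonal part $|k-l|\le1$ and the far part $|k-l|\ge2$. If $|k-l|\ge2$, then for $x\in X_k$ and $y\in X_l$ we have $|f(x)-f(y)|>\delta$, so $(x,y)\notin E$. Hence $(X_k\times X_l)\cap E=\emptyset$, and likewise for any unions of such level sets separated by a gap.

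For the near-diagonal part, use $\|\sum_k\chi_{X_k}a\chi_{X_{k\pm1}}\|\le\|a\|$, since it is a block-off-diagonal compression of $a$. This contributes at most $2\delta\|a\|$.

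The far part is the main obstacle, because naively summing $n^2$ terms of size $\eps$ loses too much. Instead write
\[\sum_{k-l\ge2}(k-l)\delta\,\chi_{X_k}a\chi_{X_l}=\delta\sum_{m\ge2}\chi_{\{f>m\text{-th level}\}}\ \cdots\]
via the layer-cake identity $k-l=\#\{m: l<m\le k\}$. This expresses the far part as
\[\delta\sum_{m}\chi_{U_{m+1}}a\chi_{X\setminus U_{m-1}}, \qquad U_m=\bigcup_{k\ge m}X_k.\]
Each summand satisfies $(U_{m+1}\times(X\setminus U_{m-1}))\cap E=\emptyset$, so it has norm at most $\eps$. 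There are about $\delta^{-1}$ summands, giving a bound of roughly $\delta\cdot\delta^{-1}\eps$ per side.

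To recover the stated $2\delta^{-1}\eps$, bound the lower and upper triangular parts separately. Alternatively, sum without the $\delta$ weight grouping and accept a bound of $\delta^{-1}\eps$ per triangle. Adding the contributions gives $\|[f,a]\|\le 4\delta\|a\|+2\delta^{-1}\eps$.

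If the constant bookkeeping proves delicate, fall back on the cited argument of \cite[Lemma 6]{Ozawa2023}. It uses only sets of the form $\{f\le t\}$ and $\{f\ge t+\delta\}$, which are $E$-separated by hypothesis, so it transfers verbatim to coarse spaces.
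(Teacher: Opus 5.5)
\textbf{Verdict.} Your outline is the right one, and it is self-contained; the paper gives no argument of its own. It only cites \cite[Lemma 6]{Ozawa2023} and remarks that the proof transfers to coarse spaces, which is essentially your fallback. Replacing $f$ by the step function $g$, bounding the near-diagonal blocks by $2\delta\|a\|$, and controlling the far blocks through $E$-separation of level sets of $f$ are all correct. But the key displayed layer-cake identity for the far part is false, and the step fails as written.

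\textbf{The error.} With $U_m=\bigcup_{j\ge m}X_j$,
\[
\chi_{U_{m+1}}a\chi_{X\setminus U_{m-1}}=\sum_{k\ge m+1,\ l\le m-2}\chi_{X_k}a\chi_{X_l}.
\]
So $\sum_m\chi_{U_{m+1}}a\chi_{X\setminus U_{m-1}}$ carries coefficient $\#\{m: l+2\le m\le k-1\}=k-l-2$ on the block $(k,l)$, not $k-l$. The entire diagonal $k-l=2$ is missing; for $n=3$ the sum is just $\chi_{X_3}a\chi_{X_0}$. Your pair $U_{m+1}$, $X\setminus U_{m-1}$ has a two-level gap, which is more separation than you need, and that is exactly what loses the count.

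\textbf{The repair.} For $k-l\ge2$ write $k-l=(k-l-1)+1$. This gives
\[
\sum_{k-l\ge2}(k-l)\chi_{X_k}a\chi_{X_l}=\sum_{m=1}^{n-1}\chi_{U_{m+1}}a\chi_{X\setminus U_m}+\sum_{k=2}^{n}\chi_{X_k}a\chi_{X\setminus U_{k-1}}.
\]
Every corner on the right pairs sets on which $f$ differs by more than $\delta$, so each has norm at most $\eps$. Hence the upper far part has norm at most $2\delta(n-1)\eps<2\eps$, and likewise the lower one. You get
\[
\|[f,a]\|\le 4\delta\|a\|+4\eps,
\]
which implies the lemma when $\delta\le 1/2$. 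For $\delta\ge 1/4$ the lemma is trivial anyway, since $\|[f,a]\|=\|[f-\tfrac12,a]\|\le\|a\|$.

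\textbf{Your final paragraph.} Replace it with the above. As written it is muddled: you already claim something better than $\delta^{-1}\eps$ per side, so there is nothing to ``recover''. ``Sum without the $\delta$ weight grouping'' is also not an argument. The $\delta^{-1}\eps$ in the statement is what the cruder diagonal-by-diagonal count produces: use $\bigl\|\sum_l\chi_{X_{l+j}}a\chi_{X_l}\bigr\|\le\eps$, weight by $\delta j$, and sum over $2\le j\le n$.

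\textbf{Your fallback.} The sets $\{f\le t\}$ and $\{f\ge t+\delta\}$ are \emph{not} $E$-separated, because $|f(x)-f(y)|=\delta$ is allowed on $E$. You need a strict gap, as your slicing into the $X_k$ provides.
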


\begin{lemma}\label{lem:smoothing}
Let $(X,\cE)$ be a coarse space, $h\colon X\to\R$ be a coarse map, $\eps>0$, $E\in \cE$,   $a\in\B(\ell_2(X))$  be  $(\eps,E)$-quasi-local, and $\delta\in (0,1]$.  Then for all $s>2\pi\omega_h(E)/\delta$, there is  $b\in \cB(\ell_2(X))$ with $d_h$-propagation at most $s$  such that
\begin{equation}\label{Eq.3.Aug.26.Depois.lhovav}
\|a-b\|\le 16\delta\|a\|+8\delta^{-1}\varepsilon .
\end{equation}
\end{lemma}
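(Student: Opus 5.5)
We take $b=\Theta_{h,K_s}(a)$, the Fejér smoothing of $a$ along $\sigma_h$. By Lemma \ref{Lemma.support.ThetaKsa}, $b$ has $d_h$-propagation at most $s$, so it only remains to estimate $\|a-b\|$. As in the proof of Lemma \ref{Lemma.ThetaKs(a).tends.to.a}, positivity of $K_s$ and $\|K_s\|_1=1$ give
\[
\|a-\Theta_{h,K_s}(a)\|\le\int_\R K_s(t)\|\sigma_{h,t}(a)-a\|\,dt .
\]
So the task reduces to controlling $\|\sigma_{h,t}(a)-a\|$ in terms of $|t|$, using quasi-locality of $a$ and Lemma \ref{lem:commutator}.

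The key step is the commutator estimate. Note that $\sigma_{h,t}(a)-a=[e^{ith},a]e^{-ith}$, so $\|\sigma_{h,t}(a)-a\|=\|[e^{ith},a]\|$. Write $e^{ith}=\cos(th)+i\sin(th)$. Then $f_1=(1+\cos(th))/2$ and $f_2=(1+\sin(th))/2$ take values in $[0,1]$. For $(x,y)\in E$ we have
\[
|f_j(x)-f_j(y)|\le \tfrac12|t|\,|h(x)-h(y)|\le \tfrac12|t|\,\omega_h(E).
\]
Applying Lemma \ref{lem:commutator} to each $f_j$ with $\delta_t=\max\{|t|\omega_h(E)/2,\cdot\}$ suitably chosen, we get $\|[e^{ith},a]\|\le 2\|[f_1,a]\|+2\|[f_2,a]\|\le 16\delta_t\|a\|+8\delta_t^{-1}\eps$. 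This is where the constants $16$ and $8$ come from.

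The problem is that this bound degrades as $|t|\to 0$ because of the $\delta_t^{-1}$ term. I would therefore not apply it with $\delta_t$ depending on $t$. Instead, fix the given $\delta$ and note that the Lipschitz-type condition holds with this $\delta$ whenever $|t|\omega_h(E)\le 2\delta$ (the condition only needs $\le\delta$). So for all $|t|\le T:=2\delta/\omega_h(E)$,
\[
\|\sigma_{h,t}(a)-a\|\le 16\delta\|a\|+8\delta^{-1}\eps.
\]
For $|t|>T$ we use the trivial bound $2\|a\|$ together with the Fejér tail estimate from Lemma \ref{Lemma.ThetaKs(a).tends.to.a}:
\[
\int_{|t|>T}K_s(t)\,dt\le \frac{4}{\pi s T}=\frac{2\omega_h(E)}{\pi s\delta}.
\]

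The main obstacle is the constant bookkeeping. The tail contributes an extra term of size about $\|a\|\omega_h(E)/(s\delta)$. The hypothesis $s>2\pi\omega_h(E)/\delta$ makes this smaller than $\|a\|\delta^{-2}/\pi^2$-type quantities, but it does not obviously make it absorbable into $16\delta\|a\|$. If this does not close, I would try two fixes. First, sharpen the window: the commutator lemma is applied to the real and imaginary parts separately, which gives slack in the factor $4$ versus $16$, and one can enlarge $T$ accordingly. Second, use the tail bound $\int_{|t|>T}K_s\le 4/(\pi sT)$ with $T=2\pi\delta'/\omega_h(E)$, choosing the window through $t\omega_h(E)\le\delta'$ so that the tail is at most $\delta\|a\|$-order. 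The case $\omega_h(E)=0$ is trivial, since then $a$ commutes with each $f_j$ up to the $\eps$ error.
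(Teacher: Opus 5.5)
Your proposal does not prove the lemma as stated. The obstruction you flag at the end is structural, not a bookkeeping issue, and neither of your fixes removes it.

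\textbf{The tail is not small.} Write $\omega=\omega_h(E)$ and $T=2\delta/\omega$.
\begin{itemize}
\item On $|t|\le T$ your bound already spends the whole budget $16\delta\|a\|+8\delta^{-1}\varepsilon$, so any tail contribution overshoots.
\item The tail satisfies $\int_{|t|>T}K_s\le \frac{4}{\pi sT}=\frac{2\omega}{\pi s\delta}$, and the hypothesis $s>2\pi\omega/\delta$ only bounds this by $1/\pi^2$.
\item For $s$ near the threshold, $sT\approx 4\pi$, so a fixed proportion of the Fej\'er mass genuinely lies outside the window. The integrand $\|\sigma_{h,t}(a)-a\|$ can really be of size $\|a\|$ there: take a shift by $R$ on $\mathbb{Z}$ with $h$ the identity, at $t=\pi/R$.
\item So your tail term $2\|a\|\int_{|t|>T}K_s$ is a constant times $\|a\|$, independent of $\delta$ and $\varepsilon$. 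The lemma only has content for $\delta<1/16$, since otherwise $b=0$ works.
\end{itemize}

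\textbf{Why your fixes fail.}
\begin{itemize}
\item There is no slack in the constants. The $16=2\cdot 2\cdot 4$ and $8=2\cdot2\cdot2$ come exactly from splitting $e^{ith}$ into $\cos$ and $\sin$, rescaling to $[0,1]$, and applying Lemma \ref{lem:commutator} with parameter exactly $\delta$.
\item Widening the window to $|t|\le T'$ forces the commutator lemma with parameter about $T'\omega/2$. Making the tail $O(\delta)$ at $s\approx 2\pi\omega/\delta$ needs $T'\omega$ of order $1$, which makes the window term of order $\|a\|$.
\item Even choosing the parameter $\max\{\delta,|t|\omega/2\}$ pointwise in $t$ leaves an excess of order $\delta\log(1/\delta)\|a\|$. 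This is because $K_s(t)\sim 1/t^2$, so $\int|t|K_s(t)\,dt=\infty$.
\end{itemize}
Fej\'er smoothing at this scale therefore proves only weaker variants:
\begin{itemize}
\item requiring $s\gtrsim\omega/\delta^2$;
\item accepting a logarithmic loss;
\item with a positive kernel that has compactly supported Fourier transform and finite first moment, a constant larger than $16$.
\end{itemize}
These variants would still suffice for Theorem \ref{Thm.Quasi-LocalContainedContinuityPoints}, but not for the inequality as stated.

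\textbf{The missing idea: do not integrate over $t$ at all.} The paper's construction is as follows.
\begin{itemize}
\item Fix $\omega\in(\omega_h(E),s\delta/(2\pi))$ and $L=\pi\omega/\delta$.
\item Let $\varphi_k(t)=\cos\bigl(\pi(t-kL)/(2L)\bigr)$ for $|t-kL|\le L$ and zero otherwise. Then $\sum_k\varphi_k^2=1$ and each $\varphi_k$ is $\frac{\pi}{2L}$-Lipschitz.
\item With $g_k=\varphi_k\circ h$, the strongly convergent sum $b=\sum_k g_kag_k$ has $d_h$-propagation at most $2L<s$.
\item With Rademacher signs, $f_\sigma=\sum_k r_k(\sigma)g_k$ satisfies $\int f_\sigma^2\,d\mu=1$ and $b-a=\int f_\sigma[a,f_\sigma]\,d\mu(\sigma)$.
\item The operator Cauchy--Schwarz inequality then gives $\|a-b\|\le\sup_\sigma\|[a,f_\sigma]\|$.
\item Since $(f_\sigma+2)/4$ takes values in $[0,1]$ and varies by at most $\delta/2$ across $E$, one application of Lemma \ref{lem:commutator} gives exactly $16\delta\|a\|+8\delta^{-1}\varepsilon$.
\end{itemize}
The error is a single commutator with a function that varies slowly on $E$, so no heavy-tailed kernel ever enters.
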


\begin{proof}
Fix    $\omega\in(\omega_h(E),s\delta/(2\pi))$ and, to simplify notation, let   $L=\tfrac{\pi\omega}{\delta}$. We start by fixing   a special partition of unity for $\R$ consisting of functions with supports centered around each $kL$, $k\in\Z$, and whose square roots are $\tfrac{\pi}{2L}$-Lipschitz. This can be done as follows: for each $k\in \Z$, let
\begin{equation}\varphi_k(t)=\left\{\begin{array}{ll}
\cos\left(\frac{\pi(t-kL)}{2L}\right) ,    & \ \text{ if }\ |t-kL|\leq L,\\
    0, \ \text{ otherwise}.&
\end{array}\right.
\end{equation}
It is clear that each $\varphi_k$ is $\tfrac{\pi}{2L}$-Lipschitz with support contained in $[(k-1)L,(k+1)L]$. Also, for each   $t\in \R$ there is $\ell\in\Z$ such that at most $\varphi_\ell$ and $\varphi_{\ell+1}$ are nonzero at $t$. Finally, notice that  $(\varphi_k^2)_{k\in\Z}$ forms a partition of unity of $\R$. Indeed, as $\cos(\alpha-\pi/2)=\sin(\alpha)$, let $t\in \R$ and $\ell\in \Z$ be as above and fix $r\in [0,1)$ such that $t=\ell L+rL$. Then,
\begin{align*}
\sum_{k\in\Z}\varphi_k^2(t) &=\varphi_\ell^2(t)+   \varphi_{\ell+1}^2(t)\\
&=\cos^2\left(\frac{\pi r}{2}\right)+\cos^2\left(\frac{\pi(r-1)}{2}\right)\\
&=\cos^2\left(\frac{\pi r}{2}\right)+\sin^2\left(\frac{\pi r}{2}\right)\\
&=1.
\end{align*}

For each $k\in \Z$, let $g_k=\varphi_k\circ h$ and view each $g_k$ as an operator in $\ell_\infty(X)$. So each $g_k$ is a positive contraction. Moreover, as $(\varphi_k^2)_{k\in\Z}$ is a partition of unity, it follows that
\[\text{SOT-}\sum_{k\in \Z}g_k^2=1_{\ell_2(X)}.\]
As each $\varphi_k$ is $\tfrac{\pi}{2L}$-Lipschitz, we also have that

    \begin{equation}\label{eq:lip}
|g_k(x)-g_k(y)|\le\frac{\pi}{2L}|h(x)-h(y)|
\end{equation}
for all $x,y\in X$.

For this $s>2L$, let us show that there is $b\in\cB(\ell_2(X))$ with $d_h$-propagation at most $s$  satisfying \eqref{Eq.3.Aug.26.Depois.lhovav}. Precisely, let
\[
b=\text{SOT-}\sum_{k\in\Z}g_kag_k
\]
and let us show it  has the desired properties. Firstly, notice that $b$ is   well-defined. Indeed, splitting the sum above into its indices in   $2\Z$ and   $2\Z+1$,  we obtain two sums of orthogonal operators with uniformly bounded norms and therefore they converge in the strong operator topology.

Let us now notice that $b$  has $d_h$-propagation at most $s$. Indeed, since $s>2L$, if $x,y\in X$ are so that $|h(x)-h(y)|=d_h(x,y)> s$, then $h(x)$ and $h(y)$ are not in the support of any single $\varphi_k$.  Therefore, for each $k\in\Z$, either $g_k(x)$ or $g_k(y)$ is always zero, which implies that
\[\langle b\delta_y,\delta_x\rangle=\sum_{k\in\Z} g_k(x)\langle a\delta_y,\delta_x\rangle g_{k}(y)=0 .\]
This shows that $b$  has $d_h$-propagation at most $s$.

We are left to estimate the norm $\|a-b\|$. For this, consider $\Sigma=\{-1,1\}^{\Z}$ endowed with its  product probability measure $\mu$, where each copy of $\{-1,1\}$ is considered with its uniform probability measure.  Let $(r_k)_{k\in \Z}$ be the Rademacher random variables, i.e., for each $k\in\Z$ and $\sigma=(\sigma_k)_{k\in\Z}\in \Sigma$, we have  $r_k(\sigma)=\sigma_k$. For each $\sigma\in \Sigma$, let \[f_{\sigma}=\SOTh\sum_kr_k(\sigma)g_k\in\ell_\infty(X).\] Since each $\varphi_k$ is $\tfrac{\pi}{2L}$-Lipschitz and since for each $x\in X$ there are at most two of these functions which do not vanish at $h(x)$, we have that
\begin{equation}\label{Eq.LipConstant.fsigma}
|f_{\sigma}(x)-f_{\sigma}(y)|\le\frac{2\pi}{L}|h(x)-h(y)|
\end{equation}
for all $x,y\in X$.

As the Rademacher variables are orthonormal, we have that
\[\int_{\Sigma}f_\sigma af_\sigma d\mu(\sigma)=\SOTh\sum_kg_kag_k=b\]
and
\[\int_{\Sigma}f_\sigma^2d\mu(\sigma)=\SOTh\sum_kg_k^2=1,\]
where the integrals above are all weak integrals. Hence,
\[
b-a=\int_\Sigma f_{\sigma}[a,f_{\sigma}]d\mu(\sigma) ,
\]
and by the operator-valued Cauchy--Schwarz inequality,
\begin{align}\label{eq:CS}
\|a-b\|& \leq\left\|\int f_{\sigma}^2 d\mu(\sigma)\right\|^{1/2}\left\|\int[a,f_{\sigma}]^*[a,f_{\sigma}] d\mu(\sigma)\right\|^{1/2}\notag\\
&\le\sup_{\sigma\in\Sigma}\|[a,f_{\sigma}]\|.
\end{align}

We are left to  estimate the last term in \eqref{eq:CS}. Fix $\sigma\in \Sigma$ and let
\[\tilde f=\frac{f_{\sigma}+2}{4}.\]
So,
\begin{equation}\label{Eq.o que eu fiz}
[\tilde f,a]=\frac{1}{4}[f_\sigma,a].
\end{equation}
As $\|f_\sigma\|_\infty\leq 2$,  it is clear that  $0\le\tilde f\le 1$. If $x,y\in X$ are so that $(x,y)\in E$, then  $|h(x)-h(y)|\le\omega_h(E)<\omega$ and, by \eqref{Eq.LipConstant.fsigma}, we have
\[
|\tilde f(x)-\tilde f(y)|\le \frac{2\pi\omega}{4L}=\frac{\delta}{2} .
\]
As $a$ is assumed to be $(\eps,E)$-quasi-local, Lemma \ref{lem:commutator} applied to $\tilde f$ implies that  \[\|[\tilde f,a]\|\le4\delta\|a\|+2\delta^{-1}\varepsilon\] and, using \eqref{Eq.o que eu fiz}, we conclude that
\begin{align*}
\|[f_{\sigma},a]\|&\le 16\delta\|a\|+8\delta^{-1}\varepsilon.
\end{align*}
Combining with \eqref{eq:CS} finishes the proof.
\end{proof}

\begin{theorem}\label{Thm.Quasi-LocalContainedContinuityPoints}
    Let $(X,\cE)$ be a   coarse space. Then $\cstql(X,\cE)\subseteq \CP(X,\cE)$.
\end{theorem}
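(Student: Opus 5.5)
We need to show every quasi-local $a$ is a continuity point of $\sigma_h$ for every coarse map $h\colon (X,\cE)\to\R$. Here $\CP(X,\cE)$ is read as the intersection over all coarse $h$ of $\cB(\ell_2(X))^{\sigma_h}$, as in the metric case. The plan is to fix a coarse $h$ and $a\in\cstql(X,\cE)$, and to show that $a$ lies in the norm closure of the operators with finite $d_h$-propagation. Since $\cB(\ell_2(X))^{\sigma_h}=\cstu(X,d_h)$ by Theorem \ref{thmA}, and this algebra is norm closed, that suffices.

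The approximation comes from Lemma \ref{lem:smoothing}. Fix $\eta>0$ and assume $a\neq 0$. Choose $\delta\in(0,1]$ with $16\delta\|a\|\le\eta/2$. Then set $\eps=\delta\eta/16$, so that $8\delta^{-1}\eps=\eta/2$. By quasi-locality of $a$ there is $E\in\cE$ such that $a$ is $(\eps,E)$-quasi-local. Since $h$ is coarse, $\omega_h(E)<\infty$, so we may pick any $s>2\pi\omega_h(E)/\delta$. Lemma \ref{lem:smoothing} then gives $b$ of $d_h$-propagation at most $s$ with $\|a-b\|\le 16\delta\|a\|+8\delta^{-1}\eps\le\eta$.

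Since $\eta$ was arbitrary, $a\in\cstu(X,d_h)=\cB(\ell_2(X))^{\sigma_h}$. Intersecting over all coarse $h$ yields $a\in\CP(X,\cE)$.

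There is essentially no obstacle here, since the analytic work is already done in Lemma \ref{lem:smoothing}. The two points to check are that the constants can be balanced as above, and that coarseness of $h$ is exactly what makes $\omega_h(E)$ finite, so that a finite $s$ exists.
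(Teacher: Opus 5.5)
Your proposal is correct and follows essentially the same route as the paper: reduce via Theorem \ref{thmA} to norm approximation by operators of finite $d_h$-propagation, then invoke Lemma \ref{lem:smoothing}, using coarseness of $h$ to make $\omega_h(E)$ finite. The only difference is the bookkeeping of constants. The paper normalizes to a contraction and takes $\delta=\sqrt{\eps}$, whereas you choose $\delta$ and $\eps$ directly for general $\|a\|$; both give the required bound.
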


\begin{proof}
    Let $a\in \cstql(X,\cE)$ be a contraction. As $\CP(X,\cE)$ is the intersection of all $\cB(\ell_2(X))^{\sigma_h}$ with $h$ ranging over   all coarse maps $  X\to \R$, we need to show that $a$ belongs to each of these sets. Fix such $h$. By Theorem \ref{thmA}, we only need to show that for any given $\gamma>0$ there are $s>0$ and $b\in \cB(\ell_2(X)) $ with $d_h$-propagation at most $s$ such that $\|a-b\|\leq \gamma$. Fix such $\gamma$ and let $\eps\in(0,1]$ be small enough so that $24\sqrt{\eps}\leq \gamma$. As $a$ is quasi-local, there is $E\in \cE$ such that $a$ is $(\eps,E)$-quasi-local. As $h$ is coarse, $\omega_h(E)$ is finite and we can pick $s>2\pi\omega_h(E)/\sqrt{\eps}$. Applying Lemma \ref{lem:smoothing} with  $\delta=\sqrt{\eps}$, we obtain $b\in \cB(\ell_2(X))$ with $d_h$-propagation at most $s$ such that
    \[\|a-b\|\leq 16\sqrt{\eps}+8\sqrt{\eps}=24\sqrt{\eps}\leq \gamma\]
    and we are done.
\end{proof}

\begin{remark}\label{RemarkContPointsCoarseSpaces}
    Notice that the inclusion in Theorem \ref{Thm.Quasi-LocalContainedContinuityPoints} is in general strict, even for u.l.f.\ coarse spaces.  Indeed, let $\cE_{\max}$ be the maximal u.l.f.\ coarse structure on $\N$, i.e., 
    \begin{align*}E&\in \cE_{\max}\ \Leftrightarrow\ \\
    &\sup_{n\in\N}\max\{|\{m\in \N\mid (n,m)\in E\}|,|\{m\in \N\mid (m,n)\in E\}|\}<\infty.\end{align*}Then $h\colon (\N,\cE_{\max})\to \R$ is coarse if and only if $h$ is bounded.  But then $\cB(\ell_2(\N))^{\sigma_h}=\cB(\ell_2(\N))$ for all coarse maps $h\colon \N\to \R$. On the other hand,  $\cstql(\N,\cE_{\max})$ is strictly smaller than $\cB(\ell_2(\N))$.
\end{remark}

\begin{proof}[Proof of Theorem \ref{Thm.qla.h.Points.Cont.Substructure}]
We first notice that $\cstql(X,\cE_h)$ is contained in the algebra $\cstql(X,\cE)^{\sigma_h}$. Fix a contraction  $a\in \cstql(X,\cE_h)$. As $\cE_h\subseteq \cE$, $a\in \cstql(X,\cE)$. By Theorem \ref{Thm.Quasi-LocalContainedContinuityPoints},  $a$ is a continuity point of $\sigma_h$ and we are done.

In order to show that $\cstql(X,\cE)^{\sigma_h}$ is contained in $\cstql(X,\cE_h)$, Theorem \ref{thmA} says that it is enough to show that \[\cstql(X,\cE)\cap \mathrm C^s[X,d_{h}]\subseteq \cstql(X,\cE_h)\ \text{ for all } \ s>0,\]
where $\mathrm C^s[X,d_h]$ is given as in \eqref{Eq.21.sep.26.1}.
 Fix $s>0$ and $a$ in $\cstql(X,\cE)\cap \mathrm C^s[X,d_{h}]$. Fix $\eps>0$ and let us show that there is $E\in \cE_h$ such that $a$ is $(3\eps,E)$-quasi-local. As $a\in \cstql(X,\cE)$, pick $F\in \cE$ such that $a$ is $(\eps,F)$-quasi-local and let
\[E=F\cap \{(x,y)\in X^2\mid d_h(x,y)\leq 2s\}.\]
So, $E \in \cE_h$.

Suppose $A,B\subseteq X$ are such that $(A\times B)\cap E=\emptyset$ and let us estimate $\chi_Aa\chi_B$. Let $(A_i)_i$ and $(B_i)_i$ be partitions of $A$ and $B$, respectively, given by letting,  for each $i\in \Z$,
\[A_i=h^{-1}([(i-1)s,is))\cap A\ \text{ and } \ B_i=h^{-1}([(i-1)s,is))\cap B.\]

\begin{claim}
    Let $i,j\in\Z$ be such that $\chi_{A_i}a\chi_{B_j}\neq 0$. Then
    \[|i-j|\leq 1\ \text{ and }\ (A_i\times B_j)\cap F=\emptyset.\]
\end{claim}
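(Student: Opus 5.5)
We argue by contraposition and prove the two assertions separately, using only the $d_h$-propagation bound on $a$ and the definitions of $A_i,B_j$ and $E$.

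\emph{First assertion.} Suppose $|i-j|\ge 2$. For $x\in A_i$ and $y\in B_j$ we have $h(x)\in[(i-1)s,is)$ and $h(y)\in[(j-1)s,js)$. Hence
\[
|h(x)-h(y)|> (|i-j|-1)s\ge s .
\]
To see the strict inequality, say $j\ge i+2$; then $h(y)-h(x)>(j-1)s-is=(j-i-1)s$. So $d_h(x,y)>s$, and since $a$ has $d_h$-propagation at most $s$, we get $\langle a\delta_y,\delta_x\rangle=0$ for all such pairs. Thus every matrix entry of $\chi_{A_i}a\chi_{B_j}$ vanishes, so the operator is zero. By contraposition, $\chi_{A_i}a\chi_{B_j}\neq0$ forces $|i-j|\le1$.

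\emph{Second assertion.} Assume $|i-j|\le1$. Then $h(x)$ and $h(y)$ both lie in an interval of length $2s$, namely $[(\min\{i,j\}-1)s,\max\{i,j\}s)$. Hence $d_h(x,y)<2s$ for all $x\in A_i$ and $y\in B_j$. Now let $(x,y)\in(A_i\times B_j)\cap F$. Since also $d_h(x,y)\le 2s$, we get $(x,y)\in E$. But $A_i\times B_j\subseteq A\times B$, and $(A\times B)\cap E=\emptyset$ by hypothesis, which is a contradiction. So $(A_i\times B_j)\cap F=\emptyset$.

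Note that the second part holds whenever $|i-j|\le1$, and the first part reduces the nonzero case to exactly this situation. The only point needing care is the strict inequality in the first step, which is handled by the half-open intervals. Nothing here is a real obstacle; the substantive work comes afterwards, when the claim is used. There one bounds each nonzero block by $\eps$ via $(\eps,F)$-quasi-locality, then splits the sum of blocks into the three diagonals $j=i-1,i,i+1$. Each diagonal is a block-diagonal, orthogonal sum, so its norm is at most $\eps$, giving the bound $3\eps$.
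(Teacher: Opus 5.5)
Your proof is correct and follows the paper's argument: the half-open intervals turn the propagation bound $d_h(x,y)\le s$ into $|i-j|\le 1$, and once $|i-j|\le 1$ you get $d_h<2s$ on $A_i\times B_j$, so any point of $F$ there would lie in $E\cap(A\times B)=\emptyset$. The paper phrases the second part as a contradiction (a point of $(A_i\times B_j)\cap F$ would force $d_h>2s$ and hence $|i-j|>1$), whereas you argue by contraposition, but the two arguments are the same.
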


\begin{proof}
As $\chi_{A_i}a\chi_{B_j}\neq 0$, there are  $x\in A_i$ and $y\in B_j$ with $\langle a\delta_y,\delta_x\rangle\neq 0$. As $a$ has $d_h$-propagation at most $s$, this forces $|h(x)-h(y)|\leq s$ which in turn gives  $|i-j|\leq 1$.

Suppose now the intersection in the claim is not empty and pick  $x\in A_i$ and $y\in B_j$ with  $(x,y)\in F$. As $(A\times B)\cap E=\emptyset$, $(x,y)\not\in E$ and, by the definition of $E$, we must have that $d_h(x,y)>2s$. In other words, $|h(x)-h(y)|>2s$. But this implies that $|i-j|>1$; a contradiction.
\end{proof}

By the previous claim, we can write
\begin{equation}\label{Eq.19.Aug.26.2}\chi_Aa\chi_B=\sum_{j=-1}^1\SOTh\sum_{i\in\Z}\chi_{A_i}a\chi_{B_{i+j}}.\end{equation}
Moreover, since for any given $j\in \{-1,0,1\}$, the operators $(\chi_{A_i}a\chi_{B_{i+j}})_i$ are all orthogonal to each other, we have that
\begin{equation}\label{Eq.19.Aug.26.1}\left\|\SOTh\sum_{i\in\Z}\chi_{A_i}a\chi_{B_{i+j}}\right\|\leq \sup_{i\in\Z}\left\|\chi_{A_i}a\chi_{B_{i+j}}\right\|
\end{equation}
for all $j\in \{-1,0,1\}$. Appealing to the claim above again, if $\chi_{A_i}a\chi_{B_{i+j}}$ is not zero, then $(A_i\times B_{i+j})\cap F=\emptyset$ and, as $a$ is $(\eps,F)$-quasi-local, we have
\[\|\chi_{A_i}a\chi_{B_{i+j}} \|\leq \eps.\]
So, \eqref{Eq.19.Aug.26.1} is at most $\eps$ and, by \eqref{Eq.19.Aug.26.2}, we conclude that $\|\chi_Aa\chi_B\|\leq 3\eps$ as desired.
\end{proof}

\section{Dynamical characterization of the quasi-local algebra}

We have seen in Theorem \ref{Thm.Quasi-LocalContainedContinuityPoints} that the quasi-local algebra of a  coarse space is always contained in the \cstar-algebra of continuity points $\CP(X)$. While $\CP(X)$ is in general larger than $\cstql(X)$ (see Remark \ref{RemarkContPointsCoarseSpaces}), we now show that this is not the case for u.l.f.\ metric spaces. Therefore, for u.l.f.\ metric spaces, we obtain a dynamical characterization of quasi-local algebras (Corollary \ref{Cor.cstql=CP.metric}).

\begin{proposition}\label{PropNotInQLThereisCoarseNotInCPFlow}
    Let $(X,d)$ be a u.l.f.\ metric space. If $a\not\in \cstql(X)$, then there is a coarse map $h\colon X\to \R$ such that $a\not\in \cB(\ell_2(X))^{ \sigma_h}$.
    \end{proposition}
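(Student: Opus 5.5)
The plan is to argue by contraposition through Theorem \ref{thmA}. Suppose $a\notin\cstql(X)$, so there is $\eps>0$ such that for every $r>0$ there are $A,B\subseteq X$ with $d(A,B)\ge r$ and $\|\chi_Aa\chi_B\|>\eps$. I will build a single coarse (in fact Lipschitz) $h\colon X\to\R$ and sets $A_n,B_n$ with $\|\chi_{A_n}a\chi_{B_n}\|>\eps$ and $d_h(A_n,B_n)\to\infty$.

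This suffices. If $a$ were in $\cB(\ell_2(X))^{\sigma_h}$, Theorem \ref{thmA} would give $s>0$ and $b$ of $d_h$-propagation at most $s$ with $\|a-b\|<\eps/2$. Once $d_h(A_n,B_n)>s$ we have $\chi_{A_n}b\chi_{B_n}=0$, hence $\|\chi_{A_n}a\chi_{B_n}\|<\eps/2$, which is a contradiction.

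\textbf{Step 1 (localization).} By strong-operator approximation, any witnessing pair $A,B$ can be replaced by finite subsets $A'\subseteq A$, $B'\subseteq B$ with $\|\chi_{A'}a\chi_{B'}\|>\eps$. The witnesses must also be pushed away from any prescribed finite set $F$. For this I use that $\chi_F a$ and $a\chi_F$ are finite rank, hence compact. Compact operators are norm limits of finitely supported matrices and are therefore quasi-local, so $a-\chi_{X\setminus F}a\chi_{X\setminus F}$ is quasi-local. Consequently $\chi_{X\setminus F}a\chi_{X\setminus F}\notin\cstql(X)$, and after possibly shrinking $\eps$ to $\eps/2$ and taking $r$ large, the witnesses can be chosen inside $X\setminus F$.

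\textbf{Step 2 (inductive choice).} Fix $x_0\in X$ and choose radii $r_n\to\infty$. Inductively choose finite sets $A_n,B_n\subseteq X\setminus \overline{B}(x_0,R_n)$ with
\[
d(A_n,B_n)\ge 2r_n\quad\text{and}\quad \|\chi_{A_n}a\chi_{B_n}\|>\eps .
\]
Here $R_{n}$ is taken so large that the $2r_m$-neighbourhood of $A_m$ is contained in $\overline{B}(x_0,R_n-2r_n)$ for all $m<n$. This is possible because each $A_m$ is finite and metric balls are bounded.

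\textbf{Step 3 (the map $h$).} Set
\[
h_n(x)=\max\{0,\,r_n-d(x,A_n)\}\quad\text{and}\quad h=\sum_n h_n .
\]
Each $h_n$ is nonnegative and $1$-Lipschitz, vanishes outside the $r_n$-neighbourhood $N_n$ of $A_n$, and the sets $N_n$ are pairwise disjoint by the choice of $R_n$. To check that $h$ is coarse (indeed $2$-Lipschitz), take $x\in N_n$ and $y\in N_m$ with $m\neq n$, or $y$ outside all $N_m$. Then
\[
|h(x)-h(y)|\le h_n(x)+h_m(y)\le 2d(x,y),
\]
since $h_n(x)\le d(x,X\setminus N_n)\le d(x,y)$ and similarly for $y$. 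Finally, $h\equiv r_n$ on $A_n$, while $h\equiv0$ on $B_n$: indeed $B_n$ is at distance at least $2r_n$ from $A_n$, so it misses $N_n$, and it misses every other $N_m$ by the separation built into Step 2. Thus $d_h(A_n,B_n)=r_n\to\infty$, which finishes the argument as explained above.

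The only delicate step is Step 1, namely arranging that the witnesses escape every finite set. The compactness observation handles it. Note that the u.l.f.\ hypothesis is not really needed here, beyond $X$ being a metric space in which the separation in Step 2 can be arranged.
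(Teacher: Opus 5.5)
Your route is the paper's. You push witness pairs off finite sets using compactness of $\chi_Fa$, choose them inductively escaping to infinity, and glue the bumps $\max\{0,r_n-d(\cdot,A_n)\}$ into one Lipschitz $h$. Your two local variations are fine. In Step 1 the operative estimate is $\|\chi_{A\setminus F}a\chi_{B\setminus F}\|\ge\|\chi_Aa\chi_B\|-\eps_k(r)$, where $k=a-\chi_{X\setminus F}a\chi_{X\setminus F}$ is compact, hence quasi-local. This gives a constant independent of $F$, which the bare statement $\chi_{X\setminus F}a\chi_{X\setminus F}\notin\cstql(X)$ would not, and it replaces the paper's tail-set $F'$ argument. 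Concluding via Theorem \ref{thmA} replaces the paper's direct bound $\|\sigma_{h,t}(a)-a\|\ge\sup_n|e^{itn}-1|\,\|\chi_{A_n}a\chi_{B_n}\|$.

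There is, however, a gap in the bookkeeping of Step 2. Your choice of $R_n$ controls only the $2r_m$-neighbourhoods of the earlier sets $A_m$. It says nothing about the earlier sets $B_m$, which are only required to satisfy $d(A_m,B_m)\ge 2r_m$ and may therefore lie outside $\overline B(x_0,R_n)$. So nothing prevents a later $A_n$ ($n>m$) from being chosen within distance $r_n$ of $B_m$, or even meeting it. In that case $B_m\cap N_n\neq\emptyset$, and at such a point $y\in B_m$ you get $h(y)=h_n(y)=r_n-d(y,A_n)\in(0,r_n]$, which can equal $r_m=h\restriction A_m$. Thus your claim that $B_m$ ``misses every other $N_n$ by the separation built into Step 2'' is unjustified for later indices, and $d_h(A_m,B_m)$ could even be $0$.

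The repair is easy. Also require $B_m\subseteq\overline B(x_0,R_n-2r_n)$ for all $m<n$, which is possible since each $B_m$ is finite. This forces $d(B_m,A_n)\ge 2r_n$, and it is exactly the role of the paper's condition $d(A_n\cup B_n,A_m\cup B_m)>n+m$.

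Finally, your closing remark that u.l.f.\ is not really needed should be qualified. Step 2 applies Step 1 with $F=\overline B(x_0,R_n)$, and this ball must be finite for $\chi_Fa$ to be compact. That finiteness of balls is precisely where the u.l.f.\ hypothesis enters.
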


\begin{proof}
    Fix $a\not\in \cstql(X)$. Then there is $\eps>0$ such that for all $r>0$ there are $A,B\subseteq X$ such that $d(A,B)>r$ and $\|\chi_Aa\chi_B\|\geq \eps$.
    Without loss of generality, we can also assume that $A$ and $B$ are finite.

\begin{claim}
    For all $r>0$ and all finite $F\subseteq X$, there are finite $A,B\subseteq X\setminus F$ such that $d(A,B)>r$ and $\|\chi_Aa\chi_B\|\geq \eps/2$.
\end{claim}

\begin{proof}
    As $F\subseteq X$ is finite,
     $\chi_Fa$ and $a\chi_F$ are compact. So, there is a finite $F'\subseteq X$ with $F\subseteq F'$ such that $\|\chi_Fa\chi_{X\setminus F'}\|<\eps/2$ and
   $\|\chi_{X\setminus F'}a\chi_{F}\|<\eps/2$.  Also, using that $F'$ is finite   and replacing $r$ by a larger number if necessary, we can assume that $d(x,y)<r$ for all $x,y\in F'$.  Let then $A,B\subseteq X $ be the finite sets given by our choice of $\eps$ above. So, $d(A,B)>r$ and $\|\chi_Aa\chi_B\|\geq \eps$. Since $d(x,y)<r$ for all $x,y\in F'$, either $A\subseteq X\setminus F'$ or $B\subseteq X\setminus F'$. Suppose the former happens and let $B'=B\setminus F$. Then \[\|\chi_{A}a\chi_{B\cap F}\|\leq \|\chi_{X\setminus F'}a\chi_{F}\|<\eps/2\]
   and, since $\|\chi_Aa\chi_B\|\geq \eps$,  we must have that $\|\chi_Aa\chi_{B'}\|\geq \eps/2$.

   If $B\subseteq X\setminus F'$, the proof proceeds analogously.
\end{proof}

By the previous claim and using that $X$ is u.l.f., we can select sequences $(A_n)_n$ and $(B_n)_n$ of disjoint finite subsets of $X$ such that \begin{enumerate}
    \item $d(A_n,B_n)>n$ for all $n\in\N$,
    \item $\|\chi_{A_n}a\chi_{B_n}\|\geq \eps/2$ for all $n\in\N$, and
    \item $d(A_{n}\cup B_n,A_{m}\cup B_m)> n+m$ for all distinct $n,m\in\N$.
\end{enumerate}
For each $n\in\N$, let $h_n\colon X\to \R$ be the map given by
\[h_n(x)=\max\{n-d(x,A_n),0\}\]
    for all $x\in X$. It is then immediate to check that the map $h\colon X\to \R$ defined pointwise as
    \[h=\sum_{n}h_n\]
    is coarse.
    Moreover, if $A=\bigcup_nA_n$ and $B=\bigcup_nB_n$, then $h\restriction A_n=n$ for all $n\in\N$ and $h\restriction B=0$.

    We are left to notice that $a\not\in \cB(\ell_2(X))^{\sigma_h}$. But this is clear since, as
    \begin{align*}
   \|\sigma_{h,t}(a)-a\|&=\|[(e^{it(h(x)-h(y))}-1)\langle a\delta_y,\delta_x\rangle]_{x,y} \|
   \\
&\geq \sup_n\|\chi_{A_n}[(e^{it(h(x)-h(y))}-1)\langle a\delta_y,\delta_x\rangle]_{x,y} \chi_{B_n}\|\\
   &
  = \sup_n\|(e^{itn} -1)\chi_{A_n}a\chi_{B_n}\|,
    \end{align*}
    we must have that
    \[\liminf_{t\to 0,\,t\neq 0}\|\sigma_{h,t}(a)-a\|\geq \eps.\]
    This concludes the proof.
\end{proof}

The next corollary contains part of the statement  of Theorem \ref{thmB}.

\begin{corollary}\label{Cor.cstql=CP.metric}
    Let $X$ be a u.l.f.\ metric space. Then, $\cstql(X)=\CP(X)$.
\end{corollary}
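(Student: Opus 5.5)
The corollary follows by combining two inclusions that are already available. For ``$\subseteq$'', a u.l.f.\ metric space $(X,d)$ is in particular the coarse space $(X,\cE_d)$. Its quasi-local algebra in the coarse sense coincides with the metric one: if $E\in\cE_d$ then $E\subseteq\{(x,y)\mid d(x,y)\le r\}$ for some $r$, and $(\eps,E)$-quasi-locality is then implied by $(\eps,r+1)$-quasi-locality, and conversely. Likewise, coarse maps $(X,\cE_d)\to\R$ are exactly the maps with $\omega_h(r)<\infty$ for all $r$, so $\CP(X,\cE_d)=\CP(X)$. Theorem \ref{Thm.Quasi-LocalContainedContinuityPoints} then gives $\cstql(X)\subseteq\CP(X)$ directly.

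For ``$\supseteq$'' we argue by contraposition using Proposition \ref{PropNotInQLThereisCoarseNotInCPFlow}. Let $a\in\cB(\ell_2(X))$ with $a\notin\cstql(X)$. The proposition, which uses that $X$ is u.l.f.\ and metric, produces a coarse map $h\colon X\to\R$ with $a\notin\cB(\ell_2(X))^{\sigma_h}$. Since $\CP(X)$ is the intersection of the algebras $\cB(\ell_2(X))^{\sigma_h}$ over all coarse $h$, it follows that $a\notin\CP(X)$. Hence $\CP(X)\subseteq\cstql(X)$.

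No step is a genuine obstacle, since all the analytic work is contained in the cited results. The only point needing care is the translation between the coarse-space notions (quasi-locality with respect to entourages, coarse maps on $(X,\cE_d)$) and their metric counterparts, and that translation is immediate from the definition of $\cE_d$.
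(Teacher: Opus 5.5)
Your proof is correct and is exactly the paper's argument: Theorem \ref{Thm.Quasi-LocalContainedContinuityPoints} gives $\cstql(X)\subseteq\CP(X)$, and Proposition \ref{PropNotInQLThereisCoarseNotInCPFlow} gives the reverse inclusion by contraposition. One small slip in your side remark: for $E\subseteq\{(x,y)\mid d(x,y)\le r\}$, it is $(\eps,E)$-quasi-locality that implies $(\eps,r+1)$-quasi-locality, not the other way around (take $E=\Delta_X$ to see that the direction you wrote fails), while the converse uses the entourage $\{(x,y)\mid d(x,y)<r\}$; the identification of the two quasi-local algebras is nevertheless correct.
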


\begin{proof}
By Theorem \ref{Thm.Quasi-LocalContainedContinuityPoints}, we have that $\cstql(X)\subseteq \CP(X)$. By  Proposition \ref{PropNotInQLThereisCoarseNotInCPFlow}, we have the other inclusion in case $X$ is a u.l.f.\ metric space.
\end{proof}

\section{Dynamical characterization of uniform Roe algebras}

In this section, we prove a dynamical characterization of uniform Roe algebras of u.l.f.\ metric spaces. More precisely,  we obtain Theorem \ref{thmB}, which says that the uniform Roe algebra of a u.l.f.\ metric space  $X$ coincides with the closure of all operators which are analytic for $\sigma_h$ for all coarse maps $h\colon X\to \R$, i.e., $\cstu(X)=\mathrm{AP}(X)$. We point out that since both $\cstu(X)$ and $\mathrm{AP}(X)$ are defined as the norm closure of certain $^*$-algebras, it is natural to wonder if these $^*$-algebras are already the same; in other words, 
is finite propagation equivalent to analyticity for $\sigma_h$ for all coarse maps $h\colon X\to \R$? We do not prove this here. We show however in Theorem \ref{thm:roeentire} that  an operator has finite propagation if and only if it is analytic \emph{of exponential type} (see Definition \ref{Def.BandAnalytic.functions}).\\

We start with definitions. First, we recall   the definition of analyticity for operators with respect to diagonal flows and introduce analyticity of exponential type.

\begin{definition}\label{Def.BandAnalytic.functions}
Let $X$ be a set,    $a\in\cB(\ell_2(X))$,  $h\colon X\to \R$, and $\delta>0$. We say that $a$ is \emph{analytic on a strip of width $\delta$ for $\sigma_h$} if   there is a continuous map \[F\colon\{z\in\C\mid |\mathrm{Im}(z)|\leq\delta\}\to\cB(\ell_2(X))\] which is holomorphic on the interior of the strip  $\{z\in\C\mid |\mathrm{Im}(z)|\leq\delta\}$ and satisfies \[F(t)=\sigma_{h,t}(a)\ \text{ for all }\ t\in\R.\]
We say that  $a$ is \emph{analytic on a strip   for $\sigma_h$} if there is $\delta>0$ such that $a$ is analytic on a strip of width $\delta$ for $\sigma_h$. If the analytic extension  $F$ can be defined on the entire complex plane, we simply say that $a$ is \emph{analytic for $\sigma_h$}. In this case, we also say that $a$ is \emph{analytic on a strip of infinite width for $\sigma_h$}. Finally, if moreover there are $C,K>0$ such that the map $F$ above satisfies
\[\|F(z)\|\leq Ce^{K|\mathrm{Im}(z)|}\ \text{ for all }\ z\in \mathrm{dom}(F),\]
then the analyticity of $a$ for $\sigma_h$ is said to be of \emph{exponential type}.
\end{definition}

The notions of analyticity above are now used to define certain diagonal \cstar-algebras. We define here only the algebras which play a role in this section. See Section  \ref{Section.Dynamical.algebra.between.URA.QL} for more on this.

\begin{definition}\label{Def.AP.algebra.defi}
Let $X$ be a coarse space.  For a given coarse map $h\colon X\to \R$, let  $\AP[X,h]$ be the set of operators in $ \cB(\ell_2(X))$ which are analytic   for $\sigma_h$ and   $\AP_{\exp}[X,h]$ be the set of operators in $ \cB(\ell_2(X))$ which are analytic of exponential type   for $\sigma_h$. We define
    \[\AP(X)=\overline{\bigcap_{\substack{h\colon X\to \R\\ h\text{ is coarse}}} \AP[X,h]}^{\|\cdot\|}\ \text{ and }\ \AP_{\exp}(X)=\overline{\bigcap_{\substack{h\colon X\to \R\\ h\text{ is coarse}}} \AP_{\exp}[X,h]}^{\|\cdot\|}.\]
\end{definition}

Notice   that the set inside the closure in the definition of $\AP(X)$ is a  unital $^*$-subalgebra of $\cB(\ell_2(X))$. Indeed, given a coarse $h$ and appropriate extensions $F$ and $G$ for $a$ and $b$ on strips of widths $\delta\in (0,\infty]$ and $\delta'$, the maps $F+G$ and $FG$ are extensions for $a+b$ and $ab$ on the strip  of width $\min\{\delta,\delta'\}$; the map $z\mapsto F(\bar z)^*$ is an extension for $a^*$. Moreover, the continuity and the analyticity of $F$ and $G$ pass to these maps.   In particular,  $\AP(X)$ is a \cstar-algebra. A similar argument shows that $\mathrm{AP}_{\exp}(X)$ is also a \cstar-algebra.

The next simple proposition   shows that, whenever the maps $t\mapsto \sigma_{h,t}(a)$ have analytic extensions,   the extension is actually unique and its entry-wise coefficients are completely determined. This will serve not only as a tool to compute the entry-wise coordinates of analytic extensions but also as a tool to define analytic extensions.

 \begin{proposition}\label{Prop.xy.coordinates.ana.extension}
     Let $X$ be a set, $h\colon X\to \R$ a map, and $a\in \cB(\ell_2(X))$ be analytic on a strip for $\sigma_h$. Let $F$ witness this analyticity on the strip  $\{z\in \C\mid |\mathrm{Im}(z)|\leq \delta\}$ where $\delta>0$. Then, for each $z$ in this strip and each $x,y\in X$, we have
     \[\langle F(z)\delta_y,\delta_x\rangle=e^{iz(h(x)-h(y))}\langle a\delta_y,\delta_x\rangle.\]
 \end{proposition}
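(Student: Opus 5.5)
Fix $x,y\in X$ and consider the scalar function
\[
f_{x,y}(z)=\langle F(z)\delta_y,\delta_x\rangle
\]
on the strip $S_\delta=\{z\in\C\mid |\mathrm{Im}(z)|\leq\delta\}$. The plan is to show that $f_{x,y}$ and
\[
g_{x,y}(z)=e^{iz(h(x)-h(y))}\langle a\delta_y,\delta_x\rangle
\]
are both continuous on $S_\delta$, holomorphic on its interior, and agree on $\R$. The identity theorem then forces them to coincide everywhere on $S_\delta$.

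The first step is regularity of $f_{x,y}$. The functional $b\mapsto\langle b\delta_y,\delta_x\rangle$ is bounded and linear on $\cB(\ell_2(X))$. Since $F$ is norm continuous on $S_\delta$, the composition $f_{x,y}$ is continuous there. Since $F$ is holomorphic on the interior (in norm, or even just weakly), $f_{x,y}$ is holomorphic on the open strip $\{|\mathrm{Im}(z)|<\delta\}$: applying the functional commutes with taking difference quotients.

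The second step is the computation on the real line. For $t\in\R$, using the definition of $\sigma_h$ and that $e^{-ith}\delta_y=e^{-ith(y)}\delta_y$, we get
\[
f_{x,y}(t)=\langle e^{ith}ae^{-ith}\delta_y,\delta_x\rangle=e^{it(h(x)-h(y))}\langle a\delta_y,\delta_x\rangle=g_{x,y}(t).
\]
This is the same computation as in \eqref{Eq.Theta.Schur.mult}. Clearly $g_{x,y}$ is entire.

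The third step applies the identity theorem. The difference $f_{x,y}-g_{x,y}$ is holomorphic on the connected open strip and vanishes on $\R$, a set with accumulation points inside that open strip, so it vanishes on the whole open strip. Continuity of both functions on the closed strip $S_\delta$ then extends the equality to the boundary lines $\mathrm{Im}(z)=\pm\delta$.

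The only delicate point is this boundary extension: holomorphy is assumed only on the interior, so the identity theorem alone does not reach $\mathrm{Im}(z)=\pm\delta$, and continuity of $F$ up to the boundary is what closes the gap. Since the matrix coefficients of $F(z)$ determine $F(z)$, uniqueness of the extension follows at once.
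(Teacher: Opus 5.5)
Your proof is correct and follows the paper's argument: compute the matrix coefficient on $\R$, then apply the identity theorem to the scalar function $z\mapsto\langle F(z)\delta_y,\delta_x\rangle$ against the entire function $z\mapsto e^{iz(h(x)-h(y))}\langle a\delta_y,\delta_x\rangle$. Your use of the continuity of $F$ to extend the equality to the boundary lines $\mathrm{Im}(z)=\pm\delta$ spells out a step the paper leaves implicit.
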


 \begin{proof}
     Fix   $x,y\in X$. Firstly, as $F$ extends $t\mapsto \sigma_{h,t}(a)\in \cB(\ell_2(X))$, it is clear that
      \[\langle F(t)\delta_y,\delta_x\rangle=e^{it(h(x)-h(y))}\langle a\delta_y,\delta_x\rangle\ \text{ for all }\ t\in \R.\]
Therefore, as the map
     \[z\in \C\mapsto e^{iz(h(x)-h(y))}\langle a\delta_y,\delta_x\rangle\] is analytic and it coincides with $\langle F(\cdot)\delta_y,\delta_x\rangle$ on $\R$, it must equal it on the domain of $F$.
 \end{proof}

\begin{proposition}\label{Prop.analitic.iff.finite.dh.prop}
    Let $X$ be a set, $h\colon X\to \R$ be a map, and $a\in\cB(\ell_2(X))$. The following are equivalent.

\begin{enumerate}
    \item \label{Item.Prop.analitic.iff.finite.dh.prop.1} The operator $a$ is   analytic   of exponential type for  $\sigma_h$.
    \item\label{Item.Prop.analitic.iff.finite.dh.prop.3}  The operator $a$ has finite $d_h$-propagation.
\end{enumerate}
\end{proposition}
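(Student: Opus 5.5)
The implication (2)$\Rightarrow$(1) is handled by writing down the extension explicitly; the implication (1)$\Rightarrow$(2) uses a Paley--Wiener type argument on each matrix entry, via Proposition \ref{Prop.xy.coordinates.ana.extension}.

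\emph{(2)$\Rightarrow$(1).} Suppose $a$ has $d_h$-propagation at most $s$. For $z\in\C$ we want to define $F(z)$ as the Schur multiplication of $a$ by the matrix $[e^{iz(h(x)-h(y))}]_{x,y}$. To see that this is a bounded operator with a good norm bound, write $z=t+i\tau$. Then
\[F(z)=e^{ith}\,\big(e^{-\tau h}ae^{\tau h}\big)\,e^{-ith},\]
where $e^{\mp\tau h}$ are unbounded, so this identity is only formal. To make it rigorous, we partition $X$ into the level sets $X_k=h^{-1}([ks,(k+1)s))$, $k\in\Z$. Since $a$ has propagation at most $s$, we have $a=\sum_{j=-1}^{1}\SOTh\sum_k\chi_{X_k}a\chi_{X_{k+j}}$. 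On each block, conjugating by $e^{-\tau h}$ multiplies $\chi_{X_k}a\chi_{X_{k+j}}$ on the left and right by diagonal operators. Writing $e^{-\tau h}=e^{-\tau ks}\cdot(\text{diagonal of norm}\le e^{|\tau|s})$ on $X_k$, each block is bounded by $e^{3|\tau| s}\|a\|$. Orthogonality within each $j$ then gives
\[\|F(z)\|\le 3e^{3s|\mathrm{Im} z|}\|a\|.\]
Holomorphy of $F$ follows from the same block decomposition: $F(z)=\sum_{j}\sum_k(\cdots)$, and each piece $e^{iz(h(x)-h(y))}$, after factoring out the scalar $e^{izjs}$-type terms, is a norm-convergent power series in $z$ with uniformly controlled coefficients. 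Alternatively, one can use that a locally bounded function which is weakly holomorphic entrywise, with matrix entries holomorphic, is holomorphic; here one must check the weak-$*$ versus norm holomorphy issue. Norm continuity and holomorphy follow from the estimate $\|F(z)-F(w)\|\le C|z-w|$ locally, obtained by applying the same block bound to the Schur multiplier $e^{iz\theta}-e^{iw\theta}$ with $|\theta|\le s$ per block. This is the step where some care is needed, and it is the part I expect to be most technical. Finally, $F(t)=\sigma_{h,t}(a)$ for real $t$ by construction.

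\emph{(1)$\Rightarrow$(2).} Let $F$ be entire with $\|F(z)\|\le Ce^{K|\mathrm{Im} z|}$. By Proposition \ref{Prop.xy.coordinates.ana.extension}, for each $x,y$,
\[|\langle a\delta_y,\delta_x\rangle|\,e^{-\tau(h(x)-h(y))}=|\langle F(i\tau)\delta_y,\delta_x\rangle|\le Ce^{K|\tau|}\]
for all $\tau\in\R$. If $\langle a\delta_y,\delta_x\rangle\neq0$, choose the sign of $\tau$ so that $-\tau(h(x)-h(y))=|\tau||h(x)-h(y)|$ and let $|\tau|\to\infty$. This forces $|h(x)-h(y)|\le K$. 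Hence the $d_h$-propagation of $a$ is at most $K$. This direction is immediate.
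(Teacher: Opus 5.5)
Your (1)$\Rightarrow$(2) is the paper's argument. For (2)$\Rightarrow$(1) you take a genuinely different route, and it is correct once the holomorphy step is tightened.

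\textbf{The paper's route.} It works weakly. For $\xi,\zeta\in c_{00}(X)$, the function $z\mapsto\langle F(z)\xi,\zeta\rangle$ is a finite exponential sum with frequencies in $[-M,M]$, where $M$ is the $d_h$-propagation, and it is bounded on $\R$ by $\|a\|\|\xi\|\|\zeta\|$. Phragm\'en--Lindel\"of then gives the sharp bound $\|F(z)\|\le e^{M|\mathrm{Im}(z)|}\|a\|$. At that point $F$ is only known to be holomorphic against the matrix-entry functionals, so the paper invokes Arendt--Nikolski to get norm holomorphy. That is exactly your ``alternative''.

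\textbf{Your route.} You slice along the level sets of $h$, the same device the paper uses later in Lemma \ref{Lemma.IterSlicing}. Your norm bound is correct. It gives exponential type $3s$ instead of $M$, which does not matter for the statement.

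\textbf{What is missing.} The holomorphy step is a list of options rather than an argument. A local Lipschitz estimate gives continuity, not holomorphy; you would still need Morera together with entrywise holomorphy. Block-wise power series also have to be reassembled over $k$, and that sum is only an SOT-sum.

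\textbf{The fix.} Globalize your own factorization. Set $g=\sum_k ks\chi_{X_k}$ and $b=h-g$. Then $0\le b<s$, and $g(x)-g(y)=-js$ on $X_k\times X_{k+j}$. Put $a_j=\SOTh\sum_k\chi_{X_k}a\chi_{X_{k+j}}$, so $\|a_j\|\le\|a\|$. Comparing entries gives
\[F(z)=e^{izb}\Big(\sum_{j=-1}^{1}e^{-izjs}a_j\Big)e^{-izb}.\]
Since $b\in\ell_\infty(X)$, the maps $z\mapsto e^{\pm izb}$ are norm-convergent exponential series. So $F$ is entire as a product of entire functions, and $F(t)=\sigma_{h,t}(a)$ for $t\in\R$. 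The bound $\|F(z)\|\le 3e^{3s|\mathrm{Im}(z)|}\|a\|$ is then immediate.

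\textbf{Comparison.} With this fix your proof is complete and more elementary than the paper's: it needs neither Phragm\'en--Lindel\"of nor Arendt--Nikolski. It also works for an arbitrary set $X$ and arbitrary $h$. The remark after the proposition only indicates an elementary route in the u.l.f.\ coarse setting. What the paper's route buys is the optimal exponential type $M$.
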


\begin{proof}
\eqref{Item.Prop.analitic.iff.finite.dh.prop.1} $\Rightarrow$\eqref{Item.Prop.analitic.iff.finite.dh.prop.3}: If $a$ is analytic of exponential type  for $\sigma_h$, there are  $C,K>0$ and an analytic function  $F\colon \C\to \cB(\ell_2(X))$ extending  $t\in \R\mapsto\sigma_{h,t}(a)\in \cB(\ell_2(X))$ such that    \[\|F(z)\|\leq Ce^{K|\mathrm{Im}(z)|}\ \text{  for all }\ z\in \C.\] By Proposition \ref{Prop.xy.coordinates.ana.extension},  we have
\[|\langle F(z)\delta_y,\delta_x\rangle|=|e^{iz (h(x)-h(y))}\langle a\delta_y,\delta_x\rangle|=e^{-\mathrm{Im}(z )(h(x)-h(y))}|\langle a\delta_y,\delta_x\rangle|\]
for all $x,y\in X$. Therefore,
\begin{equation}\label{Eq.27.aug.26.1}e^{-\mathrm{Im}(z )(h(x)-h(y))}|\langle a\delta_y,\delta_x\rangle|\leq Ce^{K|\mathrm{Im}(z)|}\end{equation}
for all $z\in \mathrm{dom}(F)$ and all $x,y\in X$.

Suppose $x,y\in X$ are such that $\langle a\delta_y,\delta_x\rangle\neq 0$. Then, choosing $z=t+i\delta$ and letting $\delta$ approach both $\infty$ and $-\infty$, we get  that
\[d_h(x,y)=|h(x)-h(y)|\leq K,\]
i.e., $a$ has $d_h$-propagation at most $K$.

\eqref{Item.Prop.analitic.iff.finite.dh.prop.3} $\Rightarrow$\eqref{Item.Prop.analitic.iff.finite.dh.prop.1}:  Suppose now that $M=d_h\text{-}\propg(a)$ is finite and let us show $a$ is analytic of exponential type for $\sigma_h$. Firstly, notice that, by Proposition \ref{Prop.xy.coordinates.ana.extension}, if $F$ is an analytic map witnessing the analyticity of $a$ for $\sigma_h$, then
\[\langle F(z)\delta_y,\delta_x\rangle=e^{iz(h(x)-h(y))}\langle a\delta_y,\delta_x\rangle\]
for all $z\in \C$.
By linearity, this implies that
\[\langle F(z)\xi,\zeta\rangle=\sum_{x,y\in X}e^{iz(h(x)-h(y))}\,\langle a\delta_y,\delta_x\rangle\,\xi_y\overline{\zeta_x}\]
for all  $\xi=(\xi_x)_{x\in X},\zeta=(\zeta_x)_{x\in X}\in c_{00}(X)$ and all $z\in \C$. This equality is our  starting point to obtain the analyticity of $a$. Precisely,  for $\xi,\zeta\in c_{00}(X)$, let  $u_{\xi,\zeta}\colon \C\to \C$ be given by
\[
u_{\xi,\zeta}(z)=\sum_{x,y\in X}e^{iz(h(x)-h(y))}\,\langle a\delta_y,\delta_x\rangle\,\xi_y\overline{\zeta_x}
\ \text{ for all }\ z\in \C.\]
 Notice that, for $t\in \R$, a straightforward computation gives that
\begin{equation}\label{Eq.19.Aug.26.tarde.1}u_{\xi,\zeta}(t)=\langle\sigma_{h,t}(a)\xi,\zeta\rangle.
\end{equation} Hence, as   each $\sigma_{h,t}$ is norm preserving,  for every $\xi,\zeta \in c_{00}(X)$, we have that
\begin{equation}\label{fix2:eq.realbound}
 |u_{\xi,\zeta}(t)|\ \le\ \|a\|\,\|\xi\|\,\|\zeta\|\ \text{ for all }\ t\in \R .
\end{equation}

 Notice that each $u_{\xi,\zeta}$ is an entire function of exponential type. Indeed, for a given $\xi,\zeta\in c_{00}(X)$, $u_{\xi,\zeta}$   is   a finite linear combination of the maps of the form \[z\mapsto e^{iz(h(x)-h(y))}\langle a\delta_y,\delta_x\rangle,\] where $x,y\in X$. As the $d_h$-propagation of $a$ is $M$, either $\langle a\delta_y,\delta_x\rangle=0$, or $|h(x)-h(y)|\leq M$. So, $u_{\xi,\zeta}$   is   a finite linear combination of the maps of the form $z\mapsto e^{iz\lambda}$ for $|\lambda|\leq M$.  By the classical Phragm\'en--Lindel\"of theorem for such functions (see \cite[Theorem 6.2.4]{Boas}), \eqref{fix2:eq.realbound} can be improved and we have that
\begin{equation}\label{fix2:eq.complexbound}
|u_{\xi,\zeta}(z)|\ \le\ e^{M|\mathrm{Im}(z)|}\,\|a\|\,\|\xi\|\,\|\zeta\|
\qquad\text{for all }z\in\mathbb{C}.
\end{equation}

Fix now $z\in\mathbb{C}$ and consider the sesquilinear map   \[(\xi,\zeta)\in c_{00}(X)\times c_{00}(X)\mapsto u_{\xi,\zeta}(z)\in \C.\] By \eqref{fix2:eq.complexbound}, this map is bounded. Hence there is a unique $F(z)\in\mathcal{B}(\ell_2(X))$ such that
\begin{equation}\label{Eq.19.Aug.26.tarde.2}
\langle F(z)\xi,\zeta\rangle=u_{\xi,\zeta}(z)\end{equation}
for all $\xi,\zeta\in c_{00}(X)$. By \eqref{fix2:eq.complexbound}, we have
\begin{equation}\label{Eq.19.Aug.26.tarde.3}\|F(z)\|\le e^{M|\mathrm{Im}(z)|}\|a\| \ \text{ for all } \ z\in \C.
\end{equation}
As $c_{00}(X)$ is dense in $\ell_2(X)$, it follows from   \eqref{Eq.19.Aug.26.tarde.1} and \eqref{Eq.19.Aug.26.tarde.2} that   \[F(t)= \sigma_{h,t}(a)\ \text{ for all } \ t\in \R.\]

As $F$ is an extension of $\sigma_{h,\cdot }(a)$ to the whole plane, it remains to show that   $F$ is entire. For this, let $W$ be the subset of all functionals in $\cB(\ell_2(X))^*$   given by
\[w_{\xi,\zeta}\colon b\in\mathcal{B}(\ell_2(X))\mapsto\langle b\xi,\zeta\rangle\in \C,\] where $\xi,\zeta\in c_{00}(X)$. Then $W$ is a   separating subset of $\mathcal{B}(\ell_2(X))^*$ in the sense that for all nonzero $b\in \cB(\ell_2(X))$ there are $\xi,\zeta\in c_{00}(X)$ such that   $w_{\xi,\zeta}(b)\neq 0$.  Moreover, the compositions $w_{\xi,\zeta}\circ F$   are precisely the entire functions $u_{\xi,\zeta}$. As   $F$ is locally bounded (see \eqref{Eq.19.Aug.26.tarde.3}),    \cite[Theorem 3.1]{ArendtNikolski} gives that   $F$ is entire.
\end{proof}

\begin{remark}
It is worth pointing out that the implication \eqref{Item.Prop.analitic.iff.finite.dh.prop.3}$\Rightarrow$\eqref{Item.Prop.analitic.iff.finite.dh.prop.1}  of Proposition \ref{Prop.analitic.iff.finite.dh.prop} is much simpler if $a$ has finite $d$-propagation for a u.l.f.\ metric $d$ on $X$ and $h$ is coarse with respect to $d$. Indeed, in this scenario, if $a$ has finite $d$-propagation, then there are finitely many $E_1,\ldots, E_n\subseteq X\times X$ such that each $E_i$ is the graph of a partial translation\footnote{Given a metric space $(X,d)$, a partial translation $f$ of $X$ is a partial bijection of $X$, say $f\colon A\to B$ where $A,B\subseteq X$,  such that $\sup_{x\in A}d(x,f(x))<\infty$.} of $X$ and
\[\{(x,y)\in X\times X\mid d(x,y)\leq \mathrm{prop}(a)\}=E_1\sqcup\ldots\sqcup E_n.\]
Therefore, we can write $a=\sum_{i=1}^na_iv_{f_i}$ for  appropriate $a_i$'s in $\ell_\infty(X)$ and partial translations $f_i$'s.\footnote{If $f$ is a partial translation of $X$, $v_f$ is the partial isometry on $\ell_2(X)$ such that $v_f\delta_x=\delta_{f(x)}$ for all $x\in \dom(f)$ and $v_f\delta_x=0$ for all other $x$'s. Clearly, $v_f$ has finite propagation.} It is then easy to show that each $a_iv_{f_i}$ is analytic of exponential type for $\sigma_h$; this can be done by following the proof of    \cite[Proposition 1.4(2)]{BragaExel2023}.  Moreover, this weaker statement is in fact all we will use below. We chose to present the more elaborate statement of Proposition \ref{Prop.analitic.iff.finite.dh.prop}  since it seems to be the correct characterization.
\end{remark}

\begin{lemma}\label{lem:detect} Let $X$ be a u.l.f.\ metric space and let $(x_k)_k$ and $(y_k)_k$ be sequences in $X$ such that $\lim_kd(x_k,y_k)=\infty$.  Then there is an increasing sequence $(n_k)_k$ in $\N$ and a $1$-Lipschitz map $h\colon X\to\R$ such that \[|h(x_{n_{k}}) -h(y_{n_{k}})|\geq \frac{d(x_{n_k},y_{n_k})}{9}\ \text{ for all }\ k\in\N.\]
\end{lemma}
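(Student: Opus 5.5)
We will build $h$ as a sum of bump functions placed around well-separated pairs, in the spirit of the proof of Proposition \ref{PropNotInQLThereisCoarseNotInCPFlow}. Write $r_k=d(x_k,y_k)$. For each selected pair we use the bump
\[
h_k(x)=\max\{r_k/3-d(x,x_k),0\}.
\]
It is $1$-Lipschitz, equals $r_k/3$ at $x_k$, vanishes at $y_k$, and is supported in the ball $B(x_k,r_k/3)$.

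If the supports of different bumps are far apart, then $h=\sum_k h_k$ is still $1$-Lipschitz. Moreover, $h(x_{n_k})-h(y_{n_k})$ is roughly $r_{n_k}/3$, as long as $y_{n_k}$ avoids all the other bumps. The slack between $1/3$ and $1/9$ is there to absorb the error if exact avoidance fails.

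The key step is the inductive choice of $(n_k)_k$, and here we use that $X$ is u.l.f. Balls are finite, and $r_k\to\infty$. Suppose $n_1<\dots<n_{k-1}$ have been chosen, and let $F=\bigcup_{j<k}B(x_{n_j},r_{n_j})$, a finite set. The difficulty is that $x_n$ might stay close to $F$, or even be constant, while $y_n\to\infty$. We handle this with a case split for each $n$.

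\textbf{Case 1.} If $d(x_n,F)\geq d(y_n,F)$, we swap the roles of $x_n$ and $y_n$. This is harmless, since the conclusion is symmetric up to the sign of $h(x)-h(y)$ and we bound the absolute value. So we may always centre the bump at the point of the pair farther from $F$; call it $p_n$, and call the other point $q_n$.

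\textbf{Case 2.} By the triangle inequality, $d(p_n,F)\ge r_n/2-\operatorname{diam}(F)$, and this tends to $\infty$. So for $n$ large, $d(p_n,F)\ge r_n/2-C$, and the bump $B(p_n,r_n/3)$ lies at distance at least $r_n/6-C$ from $F$.

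Next we control the interaction between bumps so that the sum stays $1$-Lipschitz. We want disjoint supports at distance at least twice the bump heights, or more simply a pointwise maximum. In fact we will define $h=\sup_k h_k$ rather than the sum. A supremum of $1$-Lipschitz functions is $1$-Lipschitz, and it is finite because each point lies in only finitely many bumps by the choice of $n_k$. This avoids the separation bookkeeping for the Lipschitz property entirely.

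It remains to check the lower bound. We have $h(p_{n_k})\ge r_{n_k}/3$, so we need $h(q_{n_k})\le 2r_{n_k}/9$. Contributions to $h(q_{n_k})$ come from three sources.

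\textbf{The bump $h_{n_k}$ itself.} It contributes $0$, since $d(q,p)=r$.

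\textbf{Earlier bumps.} These have heights at most $r_{n_j}/3$, which is bounded by a constant. Since $r_{n_k}\to\infty$, this is fine after passing to large $n_k$.

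\textbf{Later bumps.} These are the real obstacle. When choosing $n_{k'}$ with $k'>k$, we include $B(q_{n_k},r_{n_k})$ and $B(p_{n_k},r_{n_k})$ in $F$. A later bump is then at distance at least $r_{n_{k'}}/6-C$ from $q_{n_k}$, so its value at $q_{n_k}$ is at most $\max\{r_{n_{k'}}/3-(r_{n_{k'}}/6-C),0\}$, which is roughly $r_{n_{k'}}/6$. That is still large.

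To fix this, we shrink the bumps to height $r/9$. Concretely, take $h_n(x)=\max\{r_n/9-d(x,p_n),0\}$, and require $d(p_n,F)\ge r_n/2-C>r_n/9$ for $n$ large. Then later bumps vanish on $F$, which contains all earlier points, and earlier bumps have bounded height. Finally, we choose $n_k$ so large that the earlier bounded heights are negligible. This gives $|h(p)-h(q)|\ge r/9$ exactly, once we arrange $q_{n_k}\notin$ earlier supports by enlarging $r_{n_k}$.
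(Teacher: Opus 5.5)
\textbf{There is a genuine gap at the last step.} Once you shrink the bumps to height $r_n/9$, your only lower bound is $h(p_{n_k})\ge r_{n_k}/9$, so you need $h(q_{n_k})=0$ \emph{exactly}. Making the earlier heights ``negligible'' only gives $|h(x_{n_k})-h(y_{n_k})|\ge r_{n_k}/9-\max_{j<k}r_{n_j}/9$. You also cannot ``arrange $q_{n_k}\notin$ earlier supports by enlarging $r_{n_k}$'': taking $n_k$ larger does not move $q_{n_k}$, which may be the same point for every $n$.

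Concretely, let $x_n=z$ for all $n$. Suppose that at the first step (where $F=\emptyset$, so ``farther from $F$'' gives no instruction) the bump is centred at $p_{n_1}=z$. For every $k\ge2$ your rule then gives $q_{n_k}=z$ and $p_{n_k}=y_{n_k}$. Every later bump vanishes on the corresponding $F\ni z$, so $h(z)=r_{n_1}/9$, while $h(y_{n_k})=r_{n_k}/9$. Hence $|h(x_{n_k})-h(y_{n_k})|=|r_{n_k}-r_{n_1}|/9<r_{n_k}/9$ for all large $k$. Excluding this needs an argument you do not give, for instance first passing to a subsequence along which one coordinate is constant or both coordinates leave every finite set, which is the paper's case split.

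\textbf{The easiest repair is to keep your original height $r_n/3$.} The ``real obstacle'' you found there comes from a miscalculation. At $q_{n_k}$, a later bump takes the value $\max\{r_{n_{k'}}/3-d(q_{n_k},p_{n_{k'}}),0\}$. Moreover $d(q_{n_k},p_{n_{k'}})\ge d(p_{n_{k'}},F)\ge r_{n_{k'}}/2-C>r_{n_{k'}}/3$ once $n_{k'}$ is large relative to the set $F$ used at step $k'$. You plugged in the distance from $q_{n_k}$ to the \emph{support} of the bump instead of the distance to its \emph{centre}.

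So later bumps already vanish on $F$ at height $r/3$. This gives $h(p_{n_k})\ge r_{n_k}/3$ and $0\le h(q_{n_k})\le M_k:=\max_{j<k}r_{n_j}/3$. Additionally requiring $r_{n_k}\ge 9M_k/2$ yields the claim. The slack between $1/3$ and $1/9$ is exactly what absorbs the earlier bumps. For the Lipschitz property, note that all bumps but the first vanish at $p_{n_1}$. So the supremum is finite there, hence everywhere, and a supremum of $1$-Lipschitz functions finite at one point is $1$-Lipschitz.

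\textbf{Comparison with the paper.} With this repair, your proof is a genuine alternative to the paper's. The paper treats a repeated point separately via $h=d(\cdot,z)$. In the escaping case it passes to subsequences with growth and separation conditions, uses Ramsey's theorem to decide uniformly which endpoint carries the bump, and sums disjointly supported bumps of height $d(x_k,y_k)/9$ to get exact values. Your approach chooses, pair by pair, the endpoint farther from the finite set $F$, and takes a supremum of bumps with built-in slack. Together these make both the case split and the Ramsey step unnecessary.
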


\begin{proof}
Suppose first that either $(x_k)_k$ or $(y_k)_k$ repeats infinitely many times. Passing to a subsequence if necessary, we can assume without loss of generality that  all $x_k$'s are constant, say $z=x_k$ for all $k\in\N$. Then the map $h=d(\cdot, z)$ is $1$-Lipschitz and
\[|h(x_k)-h(y_k)|=|d(x_k,z)-d(y_k,z)|=d(y_k,x_k)\]
for all $k\in\N$.

Suppose now neither   $(x_k)_k$ nor $(y_k)_k$ repeats infinitely many times. As $X$ is u.l.f., this implies in particular that both sequences leave all bounded subsets of $X$. Hence, going to subsequences, we  can assume that
\begin{equation}\label{Eq.dxkyk.1111}d\left(\{x_{k+1},y_{k+1}\},\bigcup_{\ell=1}^k\{x_\ell,y_\ell\}\right)\geq\max_{1\leq \ell\leq k}d(x_\ell,y_\ell)\ \text{ for all }\ k\in\N. \end{equation}
Going to a further subsequence, we can also assume that 
\begin{equation}\label{Eq.dxkyk.1} d(x_{k+1},y_{k+1})\geq 3d(x_k,y_k) \ \text{ for all }\ k\in\N.\end{equation}

Notice that, for all $\ell,k\in\N$ with $\ell< k$, either
 \[d(x_{k},\{x_\ell,y_\ell\}) \geq \frac{d(x_{k},y_{k})}{3}\  \text{ or }\  d(y_{k},\{x_\ell,y_\ell\}) \geq \frac{d(x_{k},y_{k})}{3}.\]
Indeed, assume the contrary holds for some $\ell,k\in\N$ with $\ell<k$ and pick $z,w\in \{x_\ell,y_\ell\}$ such that
 \[d(x_{k},z)<\frac{d(x_{k},y_{k})}{3}\ \text{ and }\ d(y_{k},w)<\frac{d(x_{k},y_{k})}{3}. \]
Then,
 \begin{align*}
    d(x_{k},y_{k})&\leq d(x_{k},z)+d(z,w)+d(w,y_{k})\\
    &<\frac{2}{3}d(x_{k},y_{k})+d(x_\ell,y_\ell)\\
    &\leq d(x_{k},y_{k});
\end{align*}
a contradiction. By Ramsey's theorem, passing to a further subsequence, we can assume without loss of generality that\begin{equation}\label{Eq.dxkyk.2}d(x_{k},\{x_\ell,y_\ell\}) \geq \frac{d(x_{k},y_{k})}{3}
\ \text{ for all }\ \ell,k\in\N\ \text{ with }\ \ell <k.
\end{equation}

For each $k\in\N$, let
\[h_{k}(x)=\max\left\{0, \frac{d(x_{k},y_{k})}{9}  -d(x,x_k)\right\}\]
for all $x\in X$. So,   each $h_k$   is $1$-Lipschitz and has support contained in $B(x_k,\tfrac{d(x_k,y_k)}{9})$. In particular, by \eqref{Eq.dxkyk.1111}, \eqref{Eq.dxkyk.1}, and \eqref{Eq.dxkyk.2}, the supports of $(h_{k})_k$  are all disjoint and $h_k(y_\ell)=0$ for all distinct $\ell,k\in \N$. Hence, letting   $h\colon X\to \R$ be given by
\[h(x)=\sum_{k\in\N}h_{k}(x),\]
the map $h$ is   $1$-Lipschitz and it satisfies
\begin{align*}|h(x_{k})-h(y_{k})|=\frac{d(x_k,y_k)}{9}  \end{align*}
for all $k\in\N$.
 \end{proof}

\begin{corollary}\label{cor:detect} Let $X$ be a u.l.f.\ metric space and suppose $a\in\B(\ell_2(X))$ has infinite propagation. Then there is a $1$-Lipschitz map $h\colon X\to\R$ such that $a$ has infinite $d_h$-propagation.
\end{corollary}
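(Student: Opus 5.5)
The plan is to reduce directly to Lemma \ref{lem:detect}. Since $a$ has infinite propagation, the definition of $\propg(a)$ lets us choose, for each $k\in\N$, points $x_k,y_k\in X$ with $\langle a\delta_{y_k},\delta_{x_k}\rangle\neq 0$ and $d(x_k,y_k)\geq k$. In particular $\lim_k d(x_k,y_k)=\infty$, so the hypotheses of Lemma \ref{lem:detect} are satisfied; this is where $X$ being a u.l.f.\ metric space enters.

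Lemma \ref{lem:detect} then gives an increasing sequence $(n_k)_k$ in $\N$ and a $1$-Lipschitz map $h\colon X\to\R$ such that
\[d_h(x_{n_k},y_{n_k})=|h(x_{n_k})-h(y_{n_k})|\geq \frac{d(x_{n_k},y_{n_k})}{9}\geq \frac{n_k}{9}\]
for all $k\in\N$. Since each pair $(x_{n_k},y_{n_k})$ lies in $\supp(a)$, we get
\[d_h\text{-}\propg(a)\geq \sup_k d_h(x_{n_k},y_{n_k})=\infty.\]
Hence $a$ has infinite $d_h$-propagation. This finishes the argument, and $h$ is automatically coarse because it is $1$-Lipschitz.

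There is no real obstacle here, since all the combinatorial work (passing to subsequences and the Ramsey argument that separates the pairs) is already done inside Lemma \ref{lem:detect}. The only point to check is that the support of $a$ supplies pairs at unbounded distance, and that is exactly what infinite propagation means.
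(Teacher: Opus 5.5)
Your proposal is correct and follows essentially the same argument as the paper. You pick pairs in $\supp(a)$ at unbounded distance and apply Lemma \ref{lem:detect} to get a $1$-Lipschitz $h$. Since $n_k\geq k$, the bound $d_h(x_{n_k},y_{n_k})\geq n_k/9$ diverges, so $a$ has infinite $d_h$-propagation.
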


\begin{proof}
If  $a$ has infinite propagation, then there are   sequences $(x_k)_k$ and $(y_k)_k$ in $X$ such that
\begin{equation*}
a_{x_k,y_k}\neq 0\ \text{ for all }\ k\in \N\ \text{ and }\lim_kd(x_k,y_k)=\infty.
\end{equation*}
By Lemma \ref{lem:detect}, there is a $1$-Lipschitz map $h\colon X\to \R$ such that \[\lim_kd_h(x_{n_k},y_{n_k})=\infty.\] As $a_{x_k,y_k}\neq 0$ for all $k\in\N$, $a$ has infinite $d_h$-propagation.
\end{proof}

\begin{theorem}\label{thm:roeentire}
Let $X$ be a u.l.f.\ metric space and $a\in\B(\ell_2(X))$. The following are equivalent:
\begin{enumerate}
\item \label{Item.thm:roeentire.1} The operator $a$ has finite propagation.
\item \label{Item.thm:roeentire.2}   For every coarse $h\colon X\to\R$, $a$ is   analytic of  exponential type for $\sigma_h$.
\end{enumerate}
In particular,
$
\cstu(X) =\mathrm{AP}_{\exp}(X).
$
\end{theorem}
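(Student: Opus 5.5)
The plan is to reduce everything to Proposition \ref{Prop.analitic.iff.finite.dh.prop}, which already says that, for a single map $h$, analyticity of exponential type for $\sigma_h$ is the same as finite $d_h$-propagation. With that in hand, the theorem becomes a comparison between finite propagation for $d$ and finite $d_h$-propagation for every coarse $h$.

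For \eqref{Item.thm:roeentire.1}$\Rightarrow$\eqref{Item.thm:roeentire.2}, suppose $\propg(a)=r<\infty$ and let $h\colon X\to\R$ be coarse. Whenever $\langle a\delta_y,\delta_x\rangle\neq0$ we have $d(x,y)\le r$, and hence $d_h(x,y)=|h(x)-h(y)|\le\omega_h(r)<\infty$. So $a$ has finite $d_h$-propagation, at most $\omega_h(r)$. Proposition \ref{Prop.analitic.iff.finite.dh.prop} then shows that $a$ is analytic of exponential type for $\sigma_h$. This direction does not use that $X$ is u.l.f.

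For \eqref{Item.thm:roeentire.2}$\Rightarrow$\eqref{Item.thm:roeentire.1}, I argue by contraposition. If $a$ has infinite propagation, Corollary \ref{cor:detect} gives a $1$-Lipschitz map $h$ such that $a$ has infinite $d_h$-propagation. A $1$-Lipschitz map is coarse, since $\omega_h(r)\le r$. By Proposition \ref{Prop.analitic.iff.finite.dh.prop}, $a$ is then not analytic of exponential type for this $\sigma_h$, so \eqref{Item.thm:roeentire.2} fails. The genuine difficulty here, producing a single coarse map that detects unbounded propagation, is exactly what Lemma \ref{lem:detect} and Corollary \ref{cor:detect} were built for, and this is where the u.l.f.\ hypothesis enters.

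For the final statement, the two implications together show that the set of operators with finite propagation equals $\bigcap_h \AP_{\exp}[X,h]$, where $h$ ranges over coarse maps. Taking norm closures of both sides gives $\cstu(X)=\AP_{\exp}(X)$.
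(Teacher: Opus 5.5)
Your proposal is correct and follows the paper's proof essentially verbatim: (1)$\Rightarrow$(2) bounds the $d_h$-propagation by $\omega_h(r)$ and applies Proposition~\ref{Prop.analitic.iff.finite.dh.prop}, and (2)$\Rightarrow$(1) argues by contraposition through Corollary~\ref{cor:detect} and the same proposition. You also spell out the ``in particular'' that the paper leaves implicit: the finite-propagation operators are exactly $\bigcap_h \AP_{\exp}[X,h]$, so taking closures gives $\cstu(X)=\AP_{\exp}(X)$.
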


\begin{proof}
\eqref{Item.thm:roeentire.1}$\Rightarrow$\eqref{Item.thm:roeentire.2}: Let $r=\propg(a)$. If $h\colon X\to \R$ is coarse,    then $\omega_h(r)<\infty$ and we must have that $d_h$-$\propg(a)\leq \omega_h(r)<\infty$. By Proposition  \ref{Prop.analitic.iff.finite.dh.prop}, $a$ is analytic and of exponential type for $\sigma_h$.

\eqref{Item.thm:roeentire.2}$\Rightarrow$\eqref{Item.thm:roeentire.1}: Suppose towards a contradiction that $a$ has unbounded propagation. Then, Corollary \ref{cor:detect} produces a coarse map $h\colon X\to\R$  such that $a$ has infinite $d_h$-propagation. By Proposition \ref{Prop.analitic.iff.finite.dh.prop}, this implies that $a$ is not analytic of exponential type   for $\sigma_h$.
\end{proof}

Now that we have characterized finite propagation in terms of analyticity of exponential type, we proceed to show that, for the closure of the algebras of such operators, exponential type does not make a difference. For that, we start noticing that, with out loss of generality, we can assume our u.l.f. metric spaces to have \emph{grow at most exponentially}: 

\begin{definition}\label{Defi.Exp.Growth}
    Let $X$ be a u.l.f.\ metric space. For each $m\in\N\cup\{0\}$, let 
    \[N_X(m)=\sup_{x\in X}|B(x,m)|.\]
    We say that $X$ \emph{grows at most exponentially} if there is $\lambda>1$ such that \[N_X(m)\leq \lambda^m\ \text{ for all }\  m\in\N.\]
\end{definition}

\begin{example}\label{Defi.21.sep.26.1.qqq}
Cayley graphs or, more generally, large scale geodesic spaces clearly grow at most exponentially.  
\end{example}

As the next result shows, every u.l.f.\ metric space is coarsely equivalent to a metric space which grows at most exponentially.

\begin{proposition}\emph{(}\cite[Proposition 5.1]{DranishnikovGongLafforgueYu2002CanMathJ}{)}.Every uniformly locally finite metric space coarsely embeds into
a connected graph with vertices of degree at most three.\label{DranishnikovGongLafforgueYu2002CanMathJ.11}
\end{proposition}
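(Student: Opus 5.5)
The plan is to embed $X$ into a bounded-degree graph by a direct construction. Enumerate $X=\{x_1,x_2,\dots\}$; we may assume $X$ is countable, since it is u.l.f. and hence countable whenever the metric takes finite values. Replace each point $x\in X$ by a vertex of a graph $G$ and join points at distance at most $1$ by edges. This gives bounded degree, but not connectedness, and it need not produce a coarse embedding when $X$ is not large scale geodesic. The standard fix is to add, for each dyadic scale $2^k$ and each pair $x,y$ with $d(x,y)\le 2^k$, a path of length roughly $2^k$ between $x$ and $y$. These paths are replaced by binary-tree-like gadgets so that degrees stay bounded by three.

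The concrete steps are as follows.

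First, for each $k\ge 0$, the set $E_k=\{(x,y)\mid 2^{k-1}<d(x,y)\le 2^k\}$ has bounded vertex multiplicity, at most $N_X(2^k)$, by uniform local finiteness. For each $(x,y)\in E_k$, add a new path $P_{x,y}$ of length $2^k$ from $x$ to $y$, with fresh interior vertices.

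Second, each original vertex $x$ now has degree $\sum_k |(E_k)_x|$. This is finite for each fixed $k$ but unbounded in $k$, since $x$ participates at infinitely many scales. To repair this, hang at $x$ an infinite ray $x=v_0,v_1,v_2,\dots$. At the vertex $v_{j}$, with $j$ chosen around $2^{k-2}$ (so the extra length is comparable to the path length), attach the paths of scale $k$ through a finite binary tree of depth $\log_2 N_X(2^k)$. This is at most $C2^k$ when $X$ has polynomial or exponential growth; in general, let the path length be $\max\{2^k,\log_2 N_X(2^k)\}$ instead. Every vertex then has degree at most three.

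Third, check that $G$ is connected. Each pair $x,y$ with $d(x,y)<\infty$ lies in some $E_k$, and if $d$ is allowed to be infinite we simply join components arbitrarily along a ray. Then verify that the inclusion $\iota\colon X\to G$ is a coarse embedding.

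For the upper bound, if $d(x,y)\le 2^k$, the route from $x$ up its ray, through its tree and $P_{x,y}$, then down $y$'s tree and ray has length at most $\rho_+(2^k)$. Here $\rho_+(s)=C\max\{s,\log N_X(s)\}$ is a finite function of $s$, which gives the control function.

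For the lower bound, we show that any path in $G$ of length $L$ between original vertices $x,y$ forces $d(x,y)\le \rho_-^{-1}(L)$. Decompose the path into segments between consecutive original vertices. Each segment passes through some gadget of scale $k$, which costs length at least $2^{k-2}$, and moves $d$-distance at most $2^k$. Summing over the segments gives $d(x,y)\le 4L$. The one subtle case is a segment that goes up one ray and comes back down the same ray without reaching another original vertex; such a segment moves $d$-distance zero, so it does not affect the sum.

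I expect the main obstacle to be the bookkeeping in the second step. The degree-three requirement and the length lower bound pull against each other: trees must be shallow to keep the upper control finite, while gadget lengths must grow with $k$ so that cheap shortcuts between far-apart points cannot appear. I would first try taking every gadget at scale $k$ to have length exactly $\ell_k=\max\{2^k,\lceil\log_2 N_X(2^k)\rceil+2^k\}$. Sharing ray segments between scales should be harmless, because moving along a ray never changes the base point.
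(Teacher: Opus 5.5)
The paper does not prove this proposition; it quotes it from Dranishnikov--Gong--Lafforgue--Yu. So your argument has to stand on its own. Your construction works: a ray at each point, a finite binary tree hung at height $j_k$ carrying the scale-$k$ connections, and a fresh path $P_{x,y}$ of length $\ell_k\ge 2^k$ for each pair at scale $k$. It gives a connected graph of degree at most three, and your upper bound is fine.

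\textbf{The gap is in the lower bound.} A segment between consecutive original vertices need not pass through a single gadget. It can climb $x$'s ray, cross $P_{x,w}$, enter $w$'s tree, leave it through another leaf into $P_{w,u}$, and so on, without ever visiting $w$. So the claim that each segment moves $d$-distance at most $2^k$ for one scale $k$ is false. The up-and-down-a-ray case you single out is not the real issue.

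\textbf{The fix: decompose at the connecting paths instead.}
\begin{enumerate}
\item Give every vertex a base point: vertices of $x$'s ray and trees get $x$, and interior vertices of $P_{u,w}$ get $u$.
\item The part of $G$ belonging to $x$ meets the rest of $G$ only through the ends of the paths $P_{x,\cdot}$, whose interior vertices have degree two.
\item Hence a walk between original vertices consists of moves inside rays and trees, which do not change the base point, and maximal excursions into the paths.
\item An excursion that leaves through the end it entered changes nothing. One that traverses $P_{u,w}$ has length at least $\ell_k\ge 2^k\ge d(u,w)$.
\item The triangle inequality then gives $d(x,y)\le d_G(x,y)$.
\end{enumerate}

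This also removes the tension you anticipate between tree depth and gadget length. Rays and trees never move the base point, so you need neither the offsets $j_k\approx 2^{k-2}$ nor the enlargement of $\ell_k$ by $\log_2 N_X(2^k)$. Any distinct heights (e.g.\ $j_k=k+1$) and $\ell_k=2^k$ work. The depths $\lceil\log_2 N_X(2^k)\rceil$ only enter the upper control, where finiteness is all that matters.

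A few small repairs are also needed:
\begin{itemize}
\item Hang each tree from $v_{j_k}$ by a single edge rather than rooting it at $v_{j_k}$, which would then have degree four.
\item Pairs with $0<d(x,y)\le 1/2$ lie in no $E_k$ with $k\ge 0$, so take $E_0=\{(x,y)\mid 0<d(x,y)\le 1\}$.
\item Drop the remark about infinite distances. Joining components by finite paths would violate the lower bound, and for the finite-valued metrics of this paper the case never arises.
\end{itemize}
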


Before proving Theorem \ref{thmB}, we need some preparatory lemmas.

\begin{lemma}\label{lem:volume}
Let $X$ be a u.l.f.\ metric space and $a\in\B(\ell_2(X))$. For $m\in\N\cup\{0\}$, consider \[\eta_a(m)=\sup\{|a_{xy}|\mid d(x,y)\ge m\}
\]
and suppose that  \begin{equation}\label{Eq.Sum.Convam.eta}\sum_{m=0}^\infty N_X(m+1)\eta_a(m)<\infty.
\end{equation} Then, for all   $k\in\N$ there is $b\in \cB(\ell_2(X))$ with finite propagation   such that \[\|a-b\|\le2\sum_{m>k}N_X(m+1)\eta_a(m).\]
In particular, $a\in \cstu(X)$.
\end{lemma}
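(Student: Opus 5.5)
The natural choice for $b$ is the truncation of $a$ at propagation $k$: let $b$ be the operator whose matrix entries are $a_{xy}$ when $d(x,y)\le k$ and $0$ otherwise. A priori this is only a matrix, so we first show it defines a bounded operator. Since $b=a-(a-b)$, it suffices to show that the tail $c=a-b$, whose entries are $a_{xy}$ when $d(x,y)>k$ and $0$ otherwise, is bounded with $\|c\|\le 2\sum_{m>k}N_X(m+1)\eta_a(m)$. Then $b$ is bounded, has propagation at most $k$, and satisfies the required estimate. The ``in particular'' statement then follows because the hypothesis \eqref{Eq.Sum.Convam.eta} makes this tail sum tend to $0$ as $k\to\infty$, so $a$ is a norm limit of finite propagation operators.

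To bound $c$, decompose it into shells. For integers $m\ge k$, let $c_m$ be the matrix keeping only the entries with $m< d(x,y)\le m+1$; then $c=\sum_{m\ge k}c_m$ entrywise. Each $c_m$ is bounded by the Schur test. Every row of $c_m$ has at most $|B(x,m+1)|\le N_X(m+1)$ nonzero entries, each of absolute value at most $\eta_a(m)$, since $d(x,y)>m$ forces $|a_{xy}|\le\eta_a(m)$. The same holds for columns. Hence both the row sums and the column sums of $|c_m|$ are at most $N_X(m+1)\eta_a(m)$, and the Schur test gives $\|c_m\|\le N_X(m+1)\eta_a(m)$.

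Now sum the shells. By \eqref{Eq.Sum.Convam.eta}, the series $\sum_m c_m$ converges absolutely in norm. Its norm limit agrees entrywise with $c$, because for each fixed pair $(x,y)$ only one $c_m$ contributes. Therefore $c$ is bounded with $\|c\|\le\sum_{m\ge k}N_X(m+1)\eta_a(m)$.

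The only delicate point is bookkeeping at the boundary index. The statement sums over $m>k$ with a factor $2$, whereas the natural shell decomposition produces a sum starting at $m=k$. To match the statement, I would instead truncate $b$ at propagation $k+1$, so that the tail starts at shell $m=k+1$; the factor $2$ then simply absorbs any slack. Alternatively, one can use shells $m\le d(x,y)<m+1$ together with $\eta_a(m)$ and absorb the overlap into the factor $2$. Either way the estimate has the stated form, and I do not expect any genuine obstacle here.
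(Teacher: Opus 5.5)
Your proof is correct once you commit to truncating at propagation $k+1$. The tail then consists of the entries with $d(x,y)>k+1$, the shells are indexed by $m\ge k+1$, and you get $\|a-b\|\le\sum_{m>k}N_X(m+1)\eta_a(m)$, which is the statement even without the factor $2$. This switch is needed, not cosmetic: truncating at $k$ fails if $a$ has a nonzero entry at some distance in $(k,k+1)$ and none at distance $\ge k+1$, since then the right-hand side is $0$.

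The paper uses the same shell decomposition, with half-open shells $m\le d(x,y)<m+1$ and $b=\sum_{m\le k}a^m$, so its tail also starts at $m=k+1$. The factor $2$ there comes from how each shell is bounded, not from the index bookkeeping. Instead of the Schur test, the paper partitions each shell into $2N_X(m+1)$ graphs of partial bijections, citing \cite[Lemma 2]{Ozawa2023}. It then notes that the part of $a$ supported on a partial bijection is a weighted partial isometry of norm at most $\eta_a(m)$, and sums.

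Your Schur-test estimate uses only the row and column counts $|B(x,m+1)|\le N_X(m+1)$. It therefore avoids the combinatorial decomposition lemma and gives a sharper constant. The paper's route writes $a$ explicitly as a norm-convergent series of weighted partial translations, the usual building blocks of $\cstu(X)$, but nothing in this lemma needs that extra structure.
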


\begin{proof}
As $X$ is u.l.f.,   for each $m\in\N\cup\{0\}$, there is a partition
\[\{(x,y)\in X\times X\mid d(x,y)\in [m,m+1) \}=\bigsqcup_{i=1}^{2N_X(m+1)}E^m_i\]
into graphs of partial bijections (this is standard, see for instance   \cite[Lemma 2]{Ozawa2023}). For each $m\in\N\cup\{0\}$ and $i\in \{1,\ldots, 2N_X(m+1)\}$, let $a_i^m$ be the operator obtained from $a$ by considering only its entries in $E^m_i$, i.e.,  \[a_i^m=\SOTh\sum_{(x,y)\in E^m_i}\chi_{\{x\}}a\chi_{\{y\}}.\]
So, $\|a^m_i\|\leq \eta_a(m)$ and, letting \[a^m=\sum_{i=1}^{2N_X(m+1)}a^m_i,\]
we have that \[\|a^m\|\leq 2N_X(m+1)\eta_a(m).\]
As the sum in \eqref{Eq.Sum.Convam.eta} converges, the sum $\sum_{m=0}^\infty a^m$ also converges in norm and, as coordinate-wise it converges to $a$, we have that
\[a=\sum_{m=0}^\infty a^m.\]
It is immediate then that
\[\left\|a-\sum_{m=0}^ka^m\right\|\leq 2\sum_{m>k}N_X(m+1)\eta_a(m)\]
and we are done.\end{proof}

\begin{lemma}\label{lem:allrates}
Let $X$ be a u.l.f.\ metric space and let $a\in \cB(\ell_2(X))$ be such that $a$ is analytic for $\sigma_h$ for all $1$-Lipschitz maps $h\colon X\to \R$.  Then \[\sup_{x,y\in X}e^{c\,d(x,y)}|\langle a\delta_y,\delta_x\rangle|<\infty\ \text{ for all }\ c>0.\]
\end{lemma}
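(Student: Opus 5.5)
We argue by contradiction, using entrywise formulas for analytic extensions together with a Baire-type or diagonal construction of a single $1$-Lipschitz map. By Proposition \ref{Prop.xy.coordinates.ana.extension}, if $F_h$ witnesses analyticity of $a$ for $\sigma_h$, then for every $z\in\C$ and $x,y\in X$,
\[
|\langle a\delta_y,\delta_x\rangle|\,e^{-\mathrm{Im}(z)(h(x)-h(y))}=|\langle F_h(z)\delta_y,\delta_x\rangle|\le \|F_h(z)\|.
\]
Taking $z=-is$ and $z=is$ with $s>0$ and using continuity of $F_h$ (hence boundedness on compact sets), we get
\[
|a_{xy}|\,e^{s|h(x)-h(y)|}\le C_h(s):=\max\{\|F_h(is)\|,\|F_h(-is)\|\}<\infty
\]
for all $x,y$. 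So for each fixed $1$-Lipschitz $h$ and every $s>0$, $\sup_{x,y}e^{s\,d_h(x,y)}|a_{xy}|<\infty$. The task is therefore to transfer decay in $d_h$ to decay in $d$.

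Suppose the conclusion fails for some $c>0$. Then there are pairs $(x_k,y_k)$ with $e^{c\,d(x_k,y_k)}|a_{x_ky_k}|\to\infty$. Since $|a_{xy}|\le\|a\|$, we must have $d(x_k,y_k)\to\infty$. Apply Lemma \ref{lem:detect} to obtain a subsequence $(n_k)_k$ and a $1$-Lipschitz $h$ with $d_h(x_{n_k},y_{n_k})\ge d(x_{n_k},y_{n_k})/9$. Now choose $s=9c$ in the bound above:
\[
e^{c\,d(x_{n_k},y_{n_k})}|a_{x_{n_k}y_{n_k}}|\le e^{9c\,d_h(x_{n_k},y_{n_k})}|a_{x_{n_k}y_{n_k}}|\le C_h(9c).
\]
This contradicts the unboundedness along the subsequence. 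Since $c>0$ was arbitrary, the lemma follows.

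The point needing care is that the map $h$ and the constant $s$ are chosen after the bad sequence is fixed. This ordering is legitimate: analyticity is assumed for every $1$-Lipschitz $h$, and the bound holds for every $s$ once $h$ is fixed. The only other check is that Lemma \ref{lem:detect} applies. It only needs $d(x_k,y_k)\to\infty$, which we have, and it also covers the case where one of the sequences repeats. I expect no serious obstacle beyond confirming that the constant $9$ is absorbed by rescaling $s$. Continuity of $F_h$ on the closed strip ensures $\|F_h(\pm is)\|<\infty$, although only finiteness at those two points is actually used.
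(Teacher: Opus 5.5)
Your proof is correct and follows the paper's argument essentially step for step: extract a bad sequence with $d(x_k,y_k)\to\infty$, apply Lemma \ref{lem:detect} to get a single $1$-Lipschitz $h$ with $d_h\geq d/9$ along a subsequence, and bound the entries by $\max\{\|F(9ic)\|,\|F(-9ic)\|\}$ via Proposition \ref{Prop.xy.coordinates.ana.extension}. Your opening mention of a ``Baire-type'' construction is unnecessary, since, as in the paper, no Baire argument is used.
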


\begin{proof}
Suppose towards a contradiction that the lemma fails and fix a witness $c>0$. Then there are sequences $(x_k)_k$ and $(y_k)_k$ in $X$ such that
\begin{equation}\label{Eq.lem.allrates.1}e^{cd(x_k,y_k)}|\langle a\delta_{y_k},\delta_{x_k}\rangle|\geq k \text{ for all }\  k\in\N.
\end{equation} As each of the entries of $a$ has absolute value at most $\|a\|$,  this forces   $\lim_kd(x_k,y_k)=\infty$. By Lemma \ref{lem:detect}, going to subsequences if necessary, we  pick a  $1$-Lipschitz function $h\colon X\to \R$ such that
\begin{equation}\label{Eq.lem:allrates.1.1.} |h(x_{k})-h(y_{k})|\geq \frac{d(x_k,y_k)}{9}\end{equation}
for all $k\in\N$.

By the hypothesis of the lemma, there is an analytic function $F\colon \C\to \cB(\ell_2(X))$ which extends $t\in \R\mapsto \sigma_{h,t}(a)\in \cB(\ell_2(X))$. For each $z\in \C$, each of the entries of such map must be given by
\[\langle F(z)\delta_y,\delta_x\rangle=e^{iz(h(x)-h(y))}\langle a\delta_y,\delta_x\rangle\]
(see Proposition \ref{Prop.xy.coordinates.ana.extension}).
As the absolute value of these coordinates cannot be more than $\|F(z)\|$, this implies that
\[e^{s|h(x)-h(y)|} |\langle a\delta_y,\delta_x\rangle|\leq \max\left\{\|F(is)\|,\|F(-is)\|\right\}\]
for all $s>0$ and all $x,y\in X$. Letting $s=9c$, the previous inequality together with \eqref{Eq.lem.allrates.1} and \eqref{Eq.lem:allrates.1.1.} gives that
\begin{align*}k\leq e^{9c|h(x_k)-h(y_k)|}|\langle a\delta_{y_k},\delta_{x_k}\rangle|\leq \max\{\|F(9ic)\|,\|F(-9ic)\|\}
\end{align*}
for all $k\in\N$; contradiction.
\end{proof}

\begin{proof}[Proof of Theorem \ref{thmB}]
The inclusion ``$\subseteq$''  is immediate from Theorem \ref{thm:roeentire}.

 Let us show the inclusion ``$\supseteq$''  also holds. Firstly, by Proposition \ref{DranishnikovGongLafforgueYu2002CanMathJ.11}, $X$ is coarsely equivalent to a u.l.f.\ metric space which grows at most exponentially. As $\mathrm{AP}(X)$ depends only on the coarse type of $X$, we can then assume that $X$ grows at most exponentially. So, pick $\lambda>1$ such that $N_X(m)\leq \lambda^m$ for all $m\in\N$. Let now $a\in \cB(\ell_2(X))$ be analytic for $\sigma_h$ for all coarse maps $h\colon X\to \R$. By Lemma \ref{lem:allrates} applied to $c=\log(\lambda)+1$, we obtain that
 \[C=\sup_{x,y\in X}e^{cd(x,y)}{|\langle a\delta_y,\delta_x\rangle|}<\infty.\]
Letting $\eta_a(m)$ be as in Lemma \ref{lem:volume}  for each $m\in\N\cup\{0\}$, this implies that  \[\eta_a(m)\leq Ce^{-cm}\ \text{ for all }\ m\in\N.\]
By our choice of $c$, this implies that
\[\sum_{m=0}^\infty N_X(m+1)\eta_a(m)<\infty.\]
By  Lemma \ref{lem:volume}, this shows that $a$ is in $\cstu(X)$.
\end{proof}

\begin{remark}  In the light of Theorem \ref{thm:roeentire}, it is natural to wonder if $\cstu(X)$ also equals $\mathrm{AP}_{\mathrm{strip}}(X)$,     the \cstar-subalgebra of $\cB(\ell_2(X))$ generated by the elements which are analytic on a strip for $\sigma_h$ for every coarse $h$.   Firstly, it is worth pointing out that the proof of Theorem \ref{thmB} fails to give that since, in order to use  Lemma \ref{lem:volume}, we need the
 imaginary parts of the elements in the domain of the extension $F$ to be arbitrarily large. Moreover, this is actually false in general and we show this in Theorems \ref{Thm.Exp.decay.Still.Contains} and \ref{Thm.Led.In.Band} below. Precisely, we show that, if $X$ is a coarse disjoint union of expander graphs, then $\mathrm{AP}_{\mathrm{strip}}(X)$ contains the product $\prod_n\mathrm M_n$.
\end{remark}

We finish this section with an immediate consequence of Theorems \ref{thm:roeentire} and \ref{thmB}.
 
 \begin{corollary}
     Let $X$ be a u.l.f.\ metric space. Then $\mathrm{AP}(X)=\mathrm{AP}_{\exp}(X)$.\qed
 \end{corollary}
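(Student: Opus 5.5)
The plan is to sandwich both algebras between copies of $\cstu(X)$, using the two characterizations already proved. By Theorem \ref{thm:roeentire}, an operator has finite propagation if and only if it is analytic of exponential type for $\sigma_h$ for every coarse map $h\colon X\to\R$. Hence the $^*$-algebra $\bigcap_h \AP_{\exp}[X,h]$ is exactly the $^*$-algebra of finite propagation operators, and taking norm closures gives $\AP_{\exp}(X)=\cstu(X)$. This is the ``in particular'' clause of that theorem.

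Next, I compare the two dense $^*$-algebras directly. Analyticity of exponential type for $\sigma_h$ is by definition a strengthening of analyticity for $\sigma_h$: the witnessing entire map $F$ is the same, with an extra growth bound. So $\AP_{\exp}[X,h]\subseteq \AP[X,h]$ for every coarse $h$. Intersecting over all coarse $h$ and taking norm closures yields $\AP_{\exp}(X)\subseteq \AP(X)$.

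For the reverse inclusion, I invoke Theorem \ref{thmB}, which gives $\AP(X)=\cstu(X)$ for u.l.f.\ metric spaces; its proof used Proposition \ref{DranishnikovGongLafforgueYu2002CanMathJ.11} to reduce to exponential growth, together with Lemmas \ref{lem:allrates} and \ref{lem:volume}. Combining this with the first step gives
\[
\AP(X)=\cstu(X)=\AP_{\exp}(X).
\]

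I do not expect any real obstacle, since every step is a direct citation. The only point worth checking is that both $\AP(X)$ and $\AP_{\exp}(X)$ are taken with respect to the same class of coarse maps $h\colon X\to\R$ for the given metric, which they are by Definition \ref{Def.AP.algebra.defi}. The reduction in Theorem \ref{thmB} to spaces of at most exponential growth is internal to its proof and does not affect the statement being used here.
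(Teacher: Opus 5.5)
Your proposal is correct and is exactly the paper's argument: the corollary is stated as an immediate consequence of Theorem~\ref{thm:roeentire}, which gives $\cstu(X)=\mathrm{AP}_{\exp}(X)$, and Theorem~\ref{thmB}, which gives $\cstu(X)=\mathrm{AP}(X)$. Your intermediate observation that $\mathrm{AP}_{\exp}[X,h]\subseteq\mathrm{AP}[X,h]$ is true but not needed once both equalities are available.
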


\section{The asymptotics of the quasi-locality  modulus and subalgebras generated by it}\label{Section.QL.algebras}

 In this section, we move away from dynamics and instead we study variations of the quasi-local algebra given by different asymptotic conditions on the quasi-locality modulus of operators (see Definition \ref{Defi.QL.Decay.modulus}). Each asymptotic behavior assumption for these moduli will lead to a \cstar-algebra lying in between the uniform Roe and the quasi-local algebra. Moreover, in the case of expander graphs, this will produce a continuum of pairwise distinct \cstar-algebras. We must notice however that, despite their natural definitions, these  are \emph{metric} constructions and not \emph{coarse} ones; indeed, these moduli are not invariant under bijective  coarse equivalences. In Section \ref{Section.Dynamical.algebra.between.URA.QL}, we return to the topic   of dynamical \cstar-algebras and show how the results obtained here can be used to construct a dynamical \cstar-algebra strictly between the uniform Roe and the quasi-local algebra; as expected, this will be a purely coarse construction. \\

We start with a definition which quantifies asymptotically the quasi-locality of an operator.

\begin{definition}\label{Defi.QL.Decay.modulus}
    Let $(X,d)$ be a metric space and $a\in \cB(\ell_2(X))$. The \emph{quasi-locality modulus of $a$} is the map $\eps_a\colon [0,\infty)\to [0,\infty)$ given by
    \[\eps_a(r)=\sup\left\{\|\chi_Aa\chi_B\|\mid A,B\subseteq X \text{ with } d(A,B)\geq r\right\}\]
    for all $r\geq 0$.
\end{definition}

\begin{example}\label{Example.decay.uRa}
    Given a metric space $X$, an operator $a\in \cB(\ell_2(X))$ has finite propagation if and only if $\eps_a(r)=0$ for $r$ sufficiently large.
\end{example}

\begin{example}\label{Example.decay.ql}
    Given a metric space $X$, an operator $a\in \cB(\ell_2(X))$ is in the quasi-local algebra of $X$  if and only if $\lim_{r\to \infty}\eps_a(r)=0$.
\end{example}

Between the asymptotic behaviors of $\eps_a$ being eventually zero and the one of $\eps_a$ simply approaching zero, there are many other possible variants. We will study two kinds of behavior: (1) polynomial decay and (2) exponential decay. We start properly defining the former.

\begin{definition}\label{Defi.QLalpha}
    Let $(X,d)$ be a u.l.f.\ metric space and $\alpha\in (0,\infty)$.
    Given $a\in\cB(\ell_2(X))$, we say that $\eps_a$ has \emph{$\alpha$-polynomial decay} if $\eps_a(r)=O(r^{-\alpha})$, i.e., if there is $C>0$ such that \[\eps_a(r)\leq Cr^{-\alpha}\ \text{  for all }\ r>0.\] We define
    \[\mathrm{QL}_{\alpha}(X)=\overline{\left\{a\in \cB(\ell_2(X))\mid \eps_a(r)= O(r^{-\alpha})\right\}}^{\|\cdot\|}.\]
    \end{definition}

\begin{definition}\label{Defi.QL.exp}
    Let $(X,d)$ be a u.l.f.\ metric space.
 Given $a\in\cB(\ell_2(X))$, we say that $\eps_a$ has \emph{exponential decay} if  there are $c,C>0$ such that
    \[\eps_a(r)\leq Ce^{-cr}\ \text{ for all }\ r\geq 0.\] We define
    \[\mathrm{QL}_{\exp}(X)=\overline{\left\{a\in \cB(\ell_2(X))\mid \eps_a\text{ decays exponentially}\right\}}^{\|\cdot\|}.\]
\end{definition}

In  view of Definitions \ref{Defi.QLalpha} and \ref{Defi.QL.exp}, some comments are in order.
Notice that, for $a,b\in \cB(\ell_2(X))$,  \[\eps_a=\eps_{a^*},\ \eps_{a+b}\leq \eps_a+\eps_b,\ \text{ and }\ \eps_{ab}(2r)\leq \eps_{a}(r)\|b\|+\eps_b(r)\|a\|.\] So, the elements of $\cB(\ell_2(X))$ with $\alpha$-polynomial decay, for  $\alpha>0$, and the ones with exponential decay form  $^*$-subalgebras of $\cB(\ell_2(X))$. So, $\mathrm{QL}_\alpha(X)$ and   $\mathrm{QL}_{\exp}(X)$ are \cstar-algebras. Moreover, by Examples \ref{Example.decay.uRa} and \ref{Example.decay.ql},
\[\cstu(X)\subseteq \mathrm{QL}_{\exp}(X) \subseteq \mathrm{QL}_{\beta}(X)\subseteq \mathrm{QL}_\alpha(X)\subseteq\cstql(X)\]
for all  $\alpha,\beta>0$ with  $\alpha<\beta$. For the remainder of this section, we show that  all inclusions above can be strict  (Theorems \ref{Thm.Exp.decay.Still.Contains} and \ref{Thm.Inclusion.QL.Algebras}).

\subsection{Embeddability of product of matrix algebras}
In this subsection, we show that if $X$ is the coarse disjoint union of expander graphs, then the product of matrix algebras $\prod_n\mathrm M_n$ embeds into $\mathrm{QL}_{\exp}(X)$ (Theorem \ref{Thm.Exp.decay.Still.Contains}). As $\mathrm{QL}_{\exp}(X)
\subseteq \cstql(X)$, this is a strengthening of the result of Ozawa showing that  $\prod_n\mathrm M_n$  embeds into $\cstql(X)$ (see \cite[Theorem B]{Ozawa2023}).

The proof of the embeddability of $\prod_n\mathrm M_n$ in $\mathrm{QL}_{\exp}(X)$ uses the   concentration of measure phenomenon. The first draft of this paper had the complete arguments for that. Luckily, a recent preprint \cite{LiZhangZhu2026} takes care of the gory details and the next lemma can be obtained as a corollary of their work.  In order not to break the flow of this piece, we make the didactic choice of postponing the proof of the next lemma to  Appendix \ref{Appendix}. 

Given $n\in\N$ and a subspace $W\subseteq \C^n$, we let $p_W$ denote the orthogonal projection $\C^n\to W$.  The next lemma is a modification of   \cite[Lemma 4]{Ozawa2023}.

\begin{lemma}\emph{(Proved in  Appendix \ref{Appendix})}.  There is a universal constant $C\geq 1$ such that, for all $n\in\N$, there is a   subspace $W\subseteq\C^n$ with $\dim(W)=\lceil n^{1/4}\rceil$ such that
\[
\|\chi_Ap_W\|\leq C\sqrt{\delta\log\Big(\frac1\delta\Big)}
\]
for all $\delta\in[\dim(W)^{-1},1/2]$ and all $A\subseteq\{1,\dots,n\}$ with $|A|\leq n\delta$.\label{LemmaGoodSubspacesQuarterPower}
\end{lemma}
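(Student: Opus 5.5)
The plan is to take $W$ to be a uniformly random subspace of $\C^n$ of dimension $k=\lceil n^{1/4}\rceil$, distributed according to Haar measure on the Grassmannian, and to show that with positive probability it satisfies the bound simultaneously for every admissible $\delta$ and every $A$ with $|A|\le n\delta$. Note that $\|\chi_Ap_W\|=\|p_W\chi_A\|$ is the largest cosine of the angle between $W$ and $\ell_2(A)$, i.e.\ $\sup\{\|\chi_A\xi\|\mid \xi\in W,\ \|\xi\|=1\}$. For a \emph{fixed} unit vector $\xi$ uniformly distributed on the sphere $S^{2n-1}\subseteq\C^n$ and a fixed $A$ with $|A|\le n\delta$, the function $\xi\mapsto\|\chi_A\xi\|$ is $1$-Lipschitz with mean at most $\sqrt{|A|/n}\le\sqrt\delta$. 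Lévy's concentration inequality on the sphere then gives
\[
\mathbb P\big(\|\chi_A\xi\|\ge \sqrt\delta+t\big)\le 2e^{-cnt^2}
\]
for some universal $c>0$.

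The next step is to pass from single vectors to all of $W$ through an $\varepsilon$-net. Write $W=U(\C^k)$ with $U$ a Haar-random isometry, and fix a $\tfrac12$-net $\mathcal N$ of the unit sphere of $\C^k$ with $|\mathcal N|\le 5^{2k}$. The standard argument gives $\|\chi_Ap_W\|\le 2\max_{\eta\in\mathcal N}\|\chi_AU\eta\|$, and each $U\eta$ is uniformly distributed on $S^{2n-1}$.

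A union bound over $\eta\in\mathcal N$ and over all $A$ of size $m=\lfloor n\delta\rfloor$ (it suffices to treat the maximal size) gives a failure probability of at most
\[
\binom{n}{m}\,5^{2k}\,2e^{-cnt^2}\le \exp\big(n\delta\log(e/\delta)+2k\log 5-cnt^2\big).
\]
Choosing $t=C'\sqrt{\delta\log(1/\delta)}$ with $C'$ large makes the exponent at most $-n\delta\log(1/\delta)$. Here we use $k\le n^{1/4}+1\ll n\delta$, which holds since $\delta\ge 1/k$ forces $n\delta\ge n/k\gtrsim n^{3/4}$. The bound is also $\ge\sqrt\delta$, so $\sqrt\delta+t\le (1+C')\sqrt{\delta\log(1/\delta)}$ on the range $\delta\le 1/2$.

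To handle all $\delta\in[k^{-1},1/2]$ at once, it suffices to restrict to dyadic values $\delta_j=2^{-j}$. Monotonicity in $A$, together with the fact that $\delta\log(1/\delta)$ changes by at most a constant factor between consecutive dyadic values, absorbs the remaining $\delta$ into the constant $C$. Summing the failure probabilities over the $O(\log k)$ dyadic scales gives a total of at most $\sum_j e^{-n\delta_j\log(1/\delta_j)}\le O(\log n)\,e^{-n^{3/4}/2}<1$ for $n$ large. For the finitely many small $n$, the trivial bound $\|\chi_Ap_W\|\le1$ is absorbed by enlarging $C$, since $\sqrt{\delta\log(1/\delta)}$ is bounded below on the relevant range.

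The main obstacle I expect is the bookkeeping in the union bound: the entropy term $\log\binom{n}{n\delta}\approx n\delta\log(1/\delta)$ is exactly what forces the $\sqrt{\log(1/\delta)}$ factor. One also has to make sure that the net cardinality $e^{O(k)}$ stays negligible, which is precisely why the dimension $k=\lceil n^{1/4}\rceil$ and the lower cutoff $\delta\ge\dim(W)^{-1}$ are chosen. Alternatively, as the paper indicates, this whole computation can be quoted from the concentration results of \cite{LiZhangZhu2026}, adapting \cite[Lemma 4]{Ozawa2023}.
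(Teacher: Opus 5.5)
Your argument is correct, but it takes a different route from the paper. The paper runs no probabilistic argument itself. It quotes \cite[Proposition 6.3]{LiZhangZhu2026} (Lemma \ref{cor.lem:frame}), which gives, for every $k\le n$, a $k$-dimensional $W$ with $\|\chi_Ap_W\|\le C\sqrt{k/n+\frac{|A|}{n}\log(\frac{en}{|A|})}$ simultaneously for all nonempty $A$. It then specializes to $k=\lceil n^{1/4}\rceil$ using two elementary estimates:
\begin{itemize}
\item $k/n\le 4\delta\le\frac{4}{\log 2}\,\delta\log(1/\delta)$, which follows from $k\le 2n^{1/4}$ and $\delta\ge 1/k$;
\item $\frac{|A|}{n}\log(\frac{en}{|A|})\le\delta\log(e/\delta)\le(1+\frac{1}{\log 2})\,\delta\log(1/\delta)$, by monotonicity of $t\mapsto t\log(e/t)$ on $(0,1]$.
\end{itemize}

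You instead reprove the needed concentration estimate from scratch: a Haar-random subspace, Lévy's inequality for a single vector, a $\frac12$-net, a union bound over sets of maximal size, and a dyadic discretization in $\delta$.

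What each route buys:
\begin{itemize}
\item The paper's route takes two lines and gives a bound uniform in $A$ with no discretization in $\delta$. The same quoted lemma is reused for Lemma \ref{LemmaGoodSubspacesQuarterPower.LARGEDIM}. There $k=\lfloor n/(\log n)^{\alpha}\rfloor$ is nearly proportional to $n$, and the $k/n$ term (the price of the net entropy) is the dominant one.
\item Your route is self-contained. It also makes the role of the lower cutoff $\delta\ge 1/\dim(W)$ explicit: the cutoff guarantees $n\delta\log(1/\delta)\gtrsim n^{3/4}$. This dominates both the net entropy $2k\log 5$ and the $O(\log n)$ dyadic scales in the final union bound.
\end{itemize}

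One cosmetic slip: $\sqrt{\delta\log(1/\delta)}\ge\sqrt\delta$ holds only for $\delta\le 1/e$. On $(1/e,1/2]$, use $\log(1/\delta)\ge\log 2$ to get $\sqrt\delta\le(\log 2)^{-1/2}\sqrt{\delta\log(1/\delta)}$. This only changes the constant.
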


\begin{theorem}\label{Thm.Exp.decay.Still.Contains}
    Let $X$ be a coarse disjoint union of expander graphs. Then $\prod_n\mathrm M_n$ embeds into $\mathrm{QL}_{\exp}(X)$.
\end{theorem}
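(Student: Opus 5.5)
We follow Ozawa's strategy from \cite[Theorem B]{Ozawa2023} and use Lemma \ref{LemmaGoodSubspacesQuarterPower} to upgrade the decay. Write $X=\bigsqcup_n X_n$ with each $X_n$ a $(k,\gamma)$-expander, and let $\kappa>1$ be as in Subsection \ref{Subsection.Coarse.Disj.Union}. Passing to a subsequence, we may assume $d_n:=\lceil |X_n|^{1/4}\rceil$ is strictly increasing; it is enough to embed $\prod_n \mathrm M_{d_n}$, since $\prod_n\mathrm M_n$ embeds into it. To see this, fix for each $m$ an index $n(m)$ with $d_{n(m)}\geq m$, chosen injectively, and use a corner embedding $\mathrm M_m\hookrightarrow \mathrm M_{d_{n(m)}}$. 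For each $n$, Lemma \ref{LemmaGoodSubspacesQuarterPower} (with $|X_n|$ in place of $n$, after identifying $X_n$ with $\{1,\dots,|X_n|\|\}$) gives a subspace $W_n\subseteq \ell_2(X_n)$ of dimension $d_n$. Fix an isometry $v_n\colon \C^{d_n}\to \ell_2(X_n)$ with range $W_n$, and define
\[\Phi\colon \prod_n \mathrm M_{d_n}\to \cB(\ell_2(X)),\qquad \Phi((b_n)_n)=\SOTh\sum_n v_nb_nv_n^*.\]
The operators $v_nb_nv_n^*$ live on the orthogonal pieces $\ell_2(X_n)$, so $\Phi$ is an injective $^*$-homomorphism. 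It remains to show that the range of $\Phi$ lies in $\mathrm{QL}_{\exp}(X)$. In fact we aim to show more: every $\Phi(b)$ with $\|b\|\le 1$ has exponentially decaying $\eps_{\Phi(b)}$, with constants independent of $b$.

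Fix $b=(b_n)_n$ with $\|b\|\leq 1$, write $a=\Phi(b)$, and take $A,B\subseteq X$ with $d(A,B)\geq r$. Put $A_n=A\cap X_n$ and $B_n=B\cap X_n$. Since $d(X_n,X_m)\to\infty$, for $r$ large only diagonal blocks contribute, and these blocks are orthogonal. Hence
\[\|\chi_A a\chi_B\|=\sup_n\|\chi_{A_n}v_nb_nv_n^*\chi_{B_n}\|\leq \sup_n \|\chi_{A_n}p_{W_n}\|\,\|p_{W_n}\chi_{B_n}\|.\]
By the expander inequality, $\min\{|A_n|,|B_n|\}\leq \kappa^{-r/2}|X_n|$. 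Suppose the minimum is attained at $A_n$, and set $\delta=\kappa^{-r/2}$. There are two cases.

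\emph{First case: $\delta\geq d_n^{-1}$.} Lemma \ref{LemmaGoodSubspacesQuarterPower} applies and gives
\[\|\chi_{A_n}p_{W_n}\|\leq C\sqrt{\delta\log(1/\delta)}\lesssim \sqrt{r}\,\kappa^{-r/4},\]
which is exponential decay in $r$.

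\emph{Second case: $\delta<d_n^{-1}$.} Then $|A_n|\leq |X_n|/d_n$. Apply the lemma with $\delta'=d_n^{-1}$ instead; this is allowed because $|A_n|\le |X_n|\delta'$. We get the bound $C\sqrt{d_n^{-1}\log d_n}$. This is where the obstacle lies: we must convert smallness in $d_n$ into smallness in $r$. We use the diameter of the expander. Since $X_n$ is a $(k,\gamma)$-expander, $\mathrm{diam}(X_n)\leq c_0\log|X_n|$ for a constant $c_0$. So $A_n,B_n\neq\emptyset$ and $d(A_n,B_n)\ge r$ force $r\le c_0\log|X_n|$, that is, $|X_n|\geq e^{r/c_0}$. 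Then $d_n\geq e^{r/(4c_0)}$, and
\[C\sqrt{d_n^{-1}\log d_n}\lesssim \sqrt{r}\,e^{-r/(8c_0)},\]
again exponential decay. (If either $A_n$ or $B_n$ is empty, the block is zero.) The case where the minimum is attained at $B_n$ is symmetric, using $\|p_{W_n}\chi_{B_n}\|=\|\chi_{B_n}p_{W_n}\|$.

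Combining the cases, $\eps_a(r)\leq C' r^{1/2}e^{-c r}$ for $r$ large, which is bounded by $C''e^{-c'r}$. For small $r$ the trivial bound $\eps_a(r)\leq 1$ suffices. Hence $a\in\mathrm{QL}_{\exp}(X)$, and $\Phi$ gives the required embedding. The step needing the most care is the second case, i.e. the range of $\delta$ not covered by Lemma \ref{LemmaGoodSubspacesQuarterPower}. Its resolution rests on the logarithmic diameter bound for expanders; if one prefers to use only the displayed expander inequality, the same conclusion follows from $\kappa^{-r/2}\geq 1/|X_n|$ whenever $A_n,B_n\neq\emptyset$.
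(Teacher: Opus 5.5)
Your proposal is correct and follows essentially the paper's proof: the paper also works with $\prod_n\cB(W_n)$ for the subspaces of Lemma \ref{LemmaGoodSubspacesQuarterPower}, and it uses $1/|X_n|\leq\kappa^{-r/2}$ (your fallback to the diameter bound) to get $d_n\geq\kappa^{r/8}$. It then merges your two cases by taking $\delta=\max\{\kappa^{-r/2},1/d_n\}$ and uses $\log t\leq 3t^{1/3}$ to obtain the clean bound $\eps_a(r)\leq 2C\kappa^{-r/24}$. Your explicit corner embedding of $\prod_n\mathrm M_n$ into $\prod_n\mathrm M_{d_n}$ spells out a step the paper leaves implicit; in addition, block-diagonality alone gives $\|\chi_Aa\chi_B\|=\sup_n\|\chi_{A_n}a\chi_{B_n}\|$ for every $r$, so $d(X_n,X_m)\to\infty$ is not needed there, and your first case tacitly also needs $\kappa^{-r/2}\leq 1/2$, which your restriction to large $r$ covers.
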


\begin{proof}
  Write  $X=\bigsqcup_nX_n$, where $(X_n)_n$ is a sequence of expander graphs.  Fix $\kappa>1$ such that
   \begin{equation}\label{Eq.asympt.exp.eq}\min\left\{\frac{|A|}{|X_{n}|},\frac{|B|}{|X_{n}|}\right\}\leq \kappa^{ -\frac{d(A,B)}{2}}\end{equation}
  for all $n\in\N$ and all $A,B\subseteq X_n$ (see Subsection \ref{Subsection.Coarse.Disj.Union}).

  For each $n\in\N$, let $W_n\subseteq \C^{|X_n|}$ be the   subspace given by Lemma \ref{LemmaGoodSubspacesQuarterPower} for $n=|X_n|$ therein. Identify each $\C^{|X_n|}$ with $\ell_2(X_n)$.  We now gather the properties of these spaces given by this lemma. For each $n\in\N$, let
  \[m_n=\dim(W_n).\]
In particular,
\[\lim_nm_n=\lim_n\left\lceil |X_n|^{1/4}\right\rceil=\infty.\]
By  the conclusion of Lemma \ref{LemmaGoodSubspacesQuarterPower} and letting $C\geq 1$ be the universal constant therein, we have that
\begin{equation}\label{Eq.GoodSubspace.Complex}
        \|\chi_Ap_{W_n}\|\leq C\sqrt{\delta\log\left(\frac{1}{\delta}\right)}
    \end{equation}
  for all $n\in\N$, all $\delta\in[1/m_n,1/2]$, and all $A\subseteq X_n$ with $|A|\leq \delta |X_n|$.

  Fix a contraction
  \[a=\SOTh\sum_na_n\in \prod_n\cB(W_n)\subseteq \cB(\ell_2(X))\]
  and let us show that
   \begin{equation}\label{Eq.UnifExpDecay}
   \eps_a(r)\leq 2C\kappa^{-r/24}\ \text{ for all }\ r>0.
   \end{equation}
  In particular, this will imply that $a\in\mathrm{QL}_{\exp}(X)$. For that, fix $r>0$ and let $A,B\subseteq X$ be such that $d(A,B)\geq r$. For each $n\in\N$, let
 \[A_n=A\cap X_{n}\ \text{ and }\ B_n=B\cap X_{n}.\]
  As $a$ is in $\prod_n\cB(\ell_2(X_n))$,  we have
 \[\|\chi_Aa\chi_B\|=\sup_n\|\chi_{A_n}a_n\chi_{B_n}\| .\]
So, we are left to estimate each
$\|\chi_{A_n}a_n\chi_{B_n}\|$.

Without loss of generality, we can assume   that  $\kappa^{-r/2}\leq 1/2$. Indeed,   if $\kappa^{-r/2}>1/2$, then $\kappa^{-r/24}>2^{-1/12}$ and  the right-hand side of \eqref{Eq.UnifExpDecay} is larger than $2C\cdot 2^{-1/12}$.  Since the latter is larger than $1$, it follows immediately that \eqref{Eq.UnifExpDecay}  holds for such $r$'s. From now on, we assume
\begin{equation}\label{Eq.21.aug.26.1}
\kappa^{-r/2}\leq 1/2.
\end{equation}

 Fix $n\in\N$. If either $A_n$ or $B_n$ is empty, then $\chi_{A_n}a\chi_{B_n}$ is zero and we are done. Otherwise, notice that $m_n\geq \kappa^{r/8}$. Indeed, fix $x\in A_n$ and $y\in B_n$. Then $d(x,y)\geq r$ and, by  \eqref{Eq.asympt.exp.eq}, we have that
 \[\frac{1}{|X_n|}\leq \kappa^{-\frac{d(x,y)}{2}}\leq \kappa^{-\frac{r}{2}}.\]
 Therefore, $\kappa^{r/2}\leq |X_n|$ and, as $m_n\geq |X_n|^{1/4}$, we obtain that \begin{equation}\label{Eq.s.uva.1}
 m_n\geq  \kappa^{r/8},
 \end{equation}
 as desired. Applying \eqref{Eq.asympt.exp.eq} again, we can assume without loss of generality that  \begin{equation}\label{Eq.21.aug.26.2}|A_n|\leq \kappa^{-r/2}|X_n|.\end{equation}

Define
 \[\delta=\max\left\{\kappa^{-r/2},\frac{1}{m_n}\right\}.\] By \eqref{Eq.21.aug.26.1},   $\delta\in[1/m_n,1/2]$ and, by \eqref{Eq.21.aug.26.2},  $|A_n|\leq \delta|X_n|$. Therefore, estimate \eqref{Eq.GoodSubspace.Complex} gives that
\begin{align}\label{Eq.21.Aug.26.lb.1}
\|\chi_{A_n}a_n\chi_{B_n}\|\leq \|\chi_{A_n}a_n\|\leq \|\chi_{A_n}p_{W_n}\| \leq C\sqrt{\delta\log\left(\frac1\delta\right)},
\end{align}
where here we are using that, since $a_n\in \cB(W_n)$, we have  $a_n=p_{W_n}a_n$. In order to estimate the last term in \eqref{Eq.21.Aug.26.lb.1}, recall that $\log(t)\leq 3t^{1/3}$ for $t\geq 1$. Then, as $\delta\leq 1$,
\begin{align}\label{Eq.21.Aug.26.lb.2}
\|\chi_{A_n}a_n\chi_{B_n}\|
\leq 2C\delta^{1/3}.\end{align}
Finally, from the definition of $\delta$, we have $\delta\leq  \kappa^{-r/8}$. Indeed, if   $\delta=\kappa^{-r/2}$, there is nothing to be said and, if $\delta=1/m_n$, this follows from  \eqref{Eq.s.uva.1}. Putting this all together, we conclude that
\[\|\chi_{A_n}a_n\chi_{B_n}\|\leq 2C\kappa^{-r/24}\]
as desired.\end{proof}

\subsection{Strict inclusions of algebras of operators with polynomial decay moduli}

The following is the main result of this section.

\begin{theorem}\label{Thm.Inclusion.QL.Algebras}
    Let $X$ be a coarse disjoint union of expander graphs. Then
    \begin{equation}
        \mathrm{QL}_{\exp}(X)\subsetneq \mathrm{QL}_\beta(X)\subsetneq \mathrm{QL}_\alpha(X)  \ \text{ for all }\ \alpha,\beta>0\ \text{ with }\ \alpha<\beta. \end{equation}
\end{theorem}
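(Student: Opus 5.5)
The plan is to prove the two strict inclusions separately. The non-strict inclusions were already noted after Definition \ref{Defi.QL.exp}. For each one I would exhibit an explicit block-diagonal operator $a=\SOTh\sum_n a_n$, with $a_n\in\cB(\ell_2(X_n))$, that lies in the larger algebra, and then find a quantity that stays under control after small norm perturbations and rules $a$ out of the smaller algebra. Throughout I would only use the expander inequality \eqref{Eq.asympt.exp.eq} and the good subspaces of Lemma \ref{LemmaGoodSubspacesQuarterPower}, as in the proof of Theorem \ref{Thm.Exp.decay.Still.Contains}.

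\emph{Step 1 (construction).} Fix $\alpha>0$. In $X_n$, whose diameter $D_n$ is of order $\log|X_n|$, choose for each dyadic scale $2^j\le D_n$ a partial permutation $v_{n,j}$ of $X_n$. It should move every point of a large subset to a point at distance between $2^{j}$ and $2^{j+1}$; for expanders such a bijection should come from Hall's theorem, since balls are exponentially small relative to $|X_n|$. I would then sandwich these between the projections $p_{W_n}$, so as to inherit the spreading estimate \eqref{Eq.GoodSubspace.Complex}, and set
\[
a_n=\sum_{j} 2^{-j\alpha}\,p_{W_n}v_{n,j}p_{W_n}.
\]
The geometric weights keep $\|a\|$ bounded. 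The bound $\eps_a(r)=O(r^{-\alpha})$ would follow by splitting $\chi_Aa\chi_B$ into two parts. The scales $2^j\ge r/2$ contribute at most $\sum_{2^j\ge r/2}2^{-j\alpha}\lesssim r^{-\alpha}$. The scales $2^j<r/2$ contribute an exponentially small amount, by the argument of Theorem \ref{Thm.Exp.decay.Still.Contains}: one of $A_n$, $B_n$ has density at most $\kappa^{-r/2}$, and \eqref{Eq.GoodSubspace.Complex} then applies. So $a\in\mathrm{QL}_\alpha(X)$. For the inclusion $\mathrm{QL}_{\exp}(X)\subsetneq\mathrm{QL}_\beta(X)$ I would use the same operator with $\alpha=\beta$.

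\emph{Step 2 (the obstruction).} Suppose $b$ satisfies $\eps_b(r)\le Cr^{-\beta}$ with $\beta>\alpha$, or decays exponentially in the second case, and $\|a-b\|<\eta$. Working in the normalized trace $\tau_n$ on $\cB(W_n)\cong \mathrm M_{m_n}$, I would compare the growth of $\tau_n\bigl((s^*s)^k\bigr)$ for $s=a_n$ and for $s=p_{W_n}b\,p_{W_n}$, with $k\to\infty$ chosen together with $n$. This is the trace argument announced in the introduction: the moments of a self-adjoint contraction record how much of its mass sits at distance $\sim 2^j$, because the support of $(s^*s)^k$ is governed by $k$-fold compositions of the $v_{n,j}$. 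The decay of $\eps_b$ forces the scale-$2^j$ contribution to $b$ to be $O(2^{-j\beta})$, whereas $a$ has contribution $2^{-j\alpha}$. Raising to the power $k$ should turn the ratio $2^{j(\beta-\alpha)}$ into a discrepancy that a perturbation of size $\eta$ cannot absorb, since a small perturbation changes the relevant moments by $O(k\eta)$ at most.

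\emph{Main obstacle.} The hard part is making Step 2 robust. Without the powers, every scale-$2^j$ quantity of $a$ is already $o(1)$ as $j\to\infty$, so a fixed-$\eta$ approximation is not contradicted. The trace-of-powers amplification must therefore be calibrated so that $k$, $j$ and $n$ grow together while $k\eta$ stays small. If this direct approach fails, I would instead choose the weights blockwise, replacing $2^{-j\alpha}$ by a single scale $D_n^{-\alpha}$ per block, with $a_n$ a normalized self-adjoint element of $\cB(W_n)$ of norm one. I would then use the embedding of $\prod_n\mathrm M_n$ into these algebras to reduce the obstruction to a statement about norms of blocks, which is a scale-invariant formulation.
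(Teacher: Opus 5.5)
There is a genuine gap, and it is already in Step 1: your operator lies in $\mathrm{QL}_{\exp}(X)$. It therefore cannot witness either strict inclusion, and Step 2 is trying to prove a false statement.

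Since $a_n=p_{W_n}a_np_{W_n}$ and $\sup_n\|a_n\|\le(1-2^{-\alpha})^{-1}$, you have $a\in\prod_n\cB(W_n)$. The argument of Theorem \ref{Thm.Exp.decay.Still.Contains} then applies to the whole block, not only to the scales $2^j<r/2$. If $d(A_n,B_n)\ge r$ and $r$ is large, then
\[
\|\chi_{A_n}a_n\chi_{B_n}\|\le\|a_n\|\min\bigl\{\|\chi_{A_n}p_{W_n}\|,\|\chi_{B_n}p_{W_n}\|\bigr\}\le 2C\|a_n\|\kappa^{-r/24}.
\]
So $\eps_a$ decays exponentially, whatever the weights $2^{-j\alpha}$ and the partial permutations $v_{n,j}$ are. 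Your fallback has the same defect; if there $\|a_n\|\to0$, then $a$ is even compact.

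The root cause is that subspaces of dimension $\approx|X_n|^{1/4}$ make the $k/n$ term in Lemma \ref{cor.lem:frame} of size $|X_n|^{-3/4}$. This is exponentially small in $\mathrm{diam}(X_n)\lesssim\log|X_n|$, so compressing by $p_{W_n}$ forces exponential decay. For the same reason, Step 2 cannot work. After compression by $p_{W_n}$, supports of $(s^*s)^k$ are no longer governed by compositions of the $v_{n,j}$. Moreover, moments of $p_{W_n}bp_{W_n}$ in the normalized trace of $\cB(W_n)\cong\mathrm M_{m_n}$ are unitary invariants that see none of the geometry of $X_n$, nor the locality of $b$.

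The missing idea is to encode the polynomial rate in the \emph{relative rank} of a projection.
\begin{itemize}
\item \emph{Membership.} The paper uses Lemma \ref{LemmaGoodSubspacesQuarterPower.LARGEDIM} to get $p_{n,\alpha}$ of rank $\approx|X_n|/(\log|X_n|)^{2\alpha}$ with
\[
\|\chi_Ap_{n,\alpha}\|\le C\Bigl((\log|X_n|)^{-2\alpha}+\tfrac{|A|}{|X_n|}\log\tfrac{e|X_n|}{|A|}\Bigr)^{1/2}.
\]
Since $d(A_n,B_n)\ge r$ forces $r\le\mathrm{diam}(X_n)\le2\log|X_n|/\log\kappa$, this gives $\eps_{p_\alpha}(r)=O(r^{-\alpha})$.
\item \emph{Obstruction.} One uses the full trace on $\ell_2(X_n)$, computed in the basis $(\delta_x)$, where the locality of an approximant $b$ is visible. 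Take $b$ self-adjoint and block diagonal with $\|b-p_\alpha\|\le\delta$ and $\eps_b(r)\le Cr^{-\beta}$. Lemma \ref{lem:trace} gives the multiplicative lower bound $\mathrm{Tr}(b_n^{2m})\ge\rank(p_{n,\alpha})(1-\delta)^{2m}$; your additive $O(k\eta)$ control would not survive $m\to\infty$. Lemma \ref{lem:compression} gives $\|b_n^m\delta_x\|\le\|\chi_Ab_n\chi_A\|^m+m(1+\delta)^{m-1}Cr^{-\beta}$ with $A=B(x,mr)$. When $|A|\le k\sqrt{|X_n|}$, the compression $\chi_Ap_{n,\alpha}\chi_A$ is negligible, so $\|\chi_Ab_n\chi_A\|\le2\delta$.
\item \emph{Choice of parameters.} Take $m_n\approx\beta'\log\log|X_n|/\log\frac{1+\delta}{\delta}$ with $\beta'\in(\alpha,\beta)$, and $r_n$ so that the second term is at most $\delta^{m_n}$. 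The requirement $m_nr_n\lesssim\log|X_n|$ is where $\beta'<\beta$ is used. This yields $\bigl(\tfrac{1-\delta}{2\delta}\bigr)^{2m_n}\le8(\log|X_n|)^{2\alpha}$, which forces $\beta'\le\alpha$ for small $\delta$, a contradiction.
\item \emph{First inclusion.} $\mathrm{QL}_{\exp}(X)\subsetneq\mathrm{QL}_\beta(X)$ follows because $p_\beta\in\mathrm{QL}_\beta(X)\setminus\mathrm{QL}_\gamma(X)$ for any $\gamma>\beta$, while $\mathrm{QL}_{\exp}(X)\subseteq\mathrm{QL}_\gamma(X)$.
\end{itemize}
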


In order to prove Theorem \ref{Thm.Inclusion.QL.Algebras}, we will once again rely on the concentration of measure phenomenon. As in the previous subsection, we do not present the proof of the precise consequence of this phenomenon we need here. Instead, we postpone the details to Appendix \ref{Appendix}. Recall, given a subspace $W\subseteq \C^n$, $p_W$ denotes the orthogonal projection $\C^n\to W$.

\begin{lemma}\emph{(Proved in Appendix \ref{Appendix}).}  There is a universal constant $C\geq 1$ such that for all $n\in \N$ and $\alpha>0$ with $1\leq(\log(n))^\alpha<n$, there is a subspace $W\subseteq\C^n$  with $\dim(W)=\lfloor \tfrac{n}{(\log{n})^\alpha}\rfloor$ such that
\[
\|\chi_Ap_W\|\leq C\sqrt{\frac{1}{(\log{n})^\alpha}+\frac{|A|}{n}\log\left(\frac{en}{|A|}\right)}
\]
for all nonempty $A\subseteq \{1,\ldots, n\}$.\label{LemmaGoodSubspacesQuarterPower.LARGEDIM}
\end{lemma}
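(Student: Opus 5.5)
A random subspace does the job: take $W$ uniformly distributed with respect to the invariant measure on the Grassmannian of $k$-dimensional subspaces of $\C^n$, where $k=\lfloor n/(\log n)^\alpha\rfloor$. I will show that, with positive probability, the bound in Lemma \ref{LemmaGoodSubspacesQuarterPower.LARGEDIM} holds for every nonempty $A$ simultaneously. Since the bound depends on $A$ only through $m=|A|$, I fix $m\in\{1,\dots,n\}$, control all $\binom{n}{m}$ sets of that size with one union bound, and then sum over $m$.

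\textbf{Single set.} For fixed $A$ with $|A|=m$, note that $\|\chi_A p_W\|=\|p_W\chi_A\|=\sup_{\xi\in S(\ell_2(A))}\|p_W\xi\|$.
\begin{enumerate}
\item For a fixed unit vector $\xi$, the map $W\mapsto \|p_W\xi\|$ has the same law as $\|p_{E}\eta\|$, where $E$ is a fixed $k$-dimensional subspace and $\eta$ is uniform on the sphere of $\C^n\cong\R^{2n}$. Its mean is about $\sqrt{k/n}\le (\log n)^{-\alpha/2}$.
\item The map $\eta\mapsto\|p_E\eta\|$ is $1$-Lipschitz, so Lévy concentration on $S^{2n-1}$ gives
\[
\mathbb P\bigl(\|p_W\xi\|>\sqrt{k/n}+t\bigr)\le 2e^{-cnt^2}.
\]
\item A $1/2$-net of the unit sphere of $\ell_2(A)\cong\C^m$ has at most $5^{2m}$ points. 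The standard net argument then gives
\[
\mathbb P\bigl(\|p_W\chi_A\|>2(\sqrt{k/n}+t)\bigr)\le 2\cdot 25^m e^{-cnt^2}.
\]
\end{enumerate}

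\textbf{Union bound over $|A|=m$.} I multiply by $\binom nm\le (en/m)^m$ and choose
\[
t=t_m=K\sqrt{\frac mn\log\frac{en}{m}+\frac{\log(2n)}{n}}
\]
with $K$ a large universal constant. Then $cnt_m^2$ dominates $m\log(25en/m)+\log(4n)$, so the probability that some $A$ of size $m$ fails is at most $1/(2n)$. Summing over the $n$ possible sizes gives total failure probability at most $1/2<1$, so some $W$ works. For every nonempty $A$ with $|A|=m$ this $W$ satisfies
\[
\|\chi_Ap_W\|\le 2\sqrt{k/n}+2t_m .
\]

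\textbf{Cleaning up the bound (main obstacle).} The first term gives $2\sqrt{k/n}\le 2(\log n)^{-\alpha/2}$, which matches the target. The remaining issue is the extra term $\log(2n)/n$ coming from the union over sizes. Since $m\ge1$, we have $\frac mn\log\frac{en}{m}\ge\frac{\log(en)}{n}\ge \frac12\cdot\frac{\log(2n)}{n}$, so this term is absorbed into the $|A|$-dependent term, and the stated inequality holds with a universal $C$. The hypothesis $1\le(\log n)^\alpha<n$ is used only to guarantee $k\ge1$, so that $W$ is a genuine subspace; for very small $n$ the bound is trivial after enlarging $C$.

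The real care lies in getting the concentration constant uniform in $k$ and in checking that the net cardinality $25^m$ is beaten by $e^{cnt_m^2}$ for all $m$, including $m$ comparable to $n$. That last case is harmless: there $t_m\gtrsim1$ and the bound exceeds $1\ge\|\chi_Ap_W\|$. Alternatively, the whole estimate can be imported from the concentration results of \cite{LiZhangZhu2026}, exactly as is done for Lemma \ref{LemmaGoodSubspacesQuarterPower}.
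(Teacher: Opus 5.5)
Your proof is correct, but it takes a different route from the paper. The paper does no probability here. It takes the subspace $W$ with $\dim W=k=\lfloor n/(\log n)^\alpha\rfloor$ given by Lemma \ref{cor.lem:frame} (i.e.\ \cite[Proposition 6.3]{LiZhangZhu2026}), whose bound is $C\sqrt{k/n+\frac{|A|}{n}\log\frac{en}{|A|}}$, and simply observes that $k/n\le(\log n)^{-\alpha}$.

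What you have written amounts to a self-contained proof of that cited proposition, valid for every $k\in\{1,\dots,n\}$. It has four ingredients:
\begin{enumerate}
\item a Haar-random $W$;
\item L\'evy concentration for the $1$-Lipschitz map $\eta\mapsto\|p_E\eta\|$ on $S^{2n-1}$, whose mean is at most $\sqrt{k/n}$ by Jensen;
\item a $1/2$-net of size $25^m$ on the sphere of $\ell_2(A)$;
\item a union bound over the $\binom nm\le(en/m)^m$ sets of each size and then over the $n$ sizes.
\end{enumerate}
Your constants check out. The extra term $\log(2n)/n$ is absorbed as you say, and in fact directly: $\frac{\log(2n)}{n}\le\frac{\log(en)}{n}\le\frac mn\log\frac{en}{m}$, by monotonicity of $s\mapsto s\log(e/s)$ on $(0,1]$.

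The paper's route is shorter and outsources the measure-concentration work to the preprint. Yours makes the lemma independent of that preprint. It also shows that the entropy term $\frac{|A|}{n}\log\frac{en}{|A|}$ is exactly the cost of the union bound over supports of size $|A|$. Since it works for arbitrary $k$, it also yields Lemma \ref{LemmaGoodSubspacesQuarterPower} by the same specialization used in the appendix.

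Two of the caveats in your last paragraph are unnecessary. Uniformity in $k$ is automatic, because L\'evy's inequality gives an exponent $cnt^2$ with a universal $c$ for every $1$-Lipschitz function. The regime $m\sim n$ needs no separate treatment, since your union-bound estimate already holds uniformly in $m$. One small correction: the hypothesis $(\log n)^\alpha<n$ gives $k\ge1$, while $(\log n)^\alpha\ge1$ is what gives $k\le n$. So the hypothesis is not used only for $k\ge1$.
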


The strict inclusions in Theorem \ref{Thm.Inclusion.QL.Algebras} will be obtained by showing that certain projections, which we call $p_\alpha$, are in $\mathrm{QL}_\alpha(X)$ but not in $\mathrm{QL}_\beta(X)$ as long as $\alpha<\beta$. The definition of these projections is rather technical and it makes use of Lemma \ref{LemmaGoodSubspacesQuarterPower.LARGEDIM}. For simplicity of the statements which follow, we shall isolate our hypothesis now and define these projections properly.

\begin{assumption}\label{Assumption.1}
    Let $X$ be a u.l.f.\ metric space which is the  coarse disjoint union of connected finite graphs, say $X=\bigsqcup_n X_n $. Assume that $\lim_n|X_n|=\infty$. Fix $\alpha>0$ and $n_0\in\N$ such that $n\geq n_0$ implies $1\leq(\log(|X_n|))^ {2\alpha}<|X_n|$. For each $n\geq n_0$, let $p_{n,\alpha}$ be a projection in $\cB(\ell_2(X_n))$ such that $\mathrm{rank}(p_{n,\alpha})$ is $\left\lfloor \tfrac{|X_n|}{(\log{(|X_n|)})^ {2\alpha}}\right\rfloor$ and
    \[
\|\chi_Ap_{n,\alpha}\|\leq C\sqrt{\frac{1}{(\log{(|X_n|)})^{2\alpha}}+\frac{|A|}{|X_n|}\log\left(\frac{e|X_n|}{|A|}\right)}
\]
for all nonempty $A\subseteq X_n$; the existence of such $p_{n,\alpha}$ and of such $C\geq 1$ is given by Lemma \ref{LemmaGoodSubspacesQuarterPower.LARGEDIM}. Define then
\[p_\alpha=\SOTh\sum_{n\geq n_0}p_{n,\alpha}.\]
So, $p_\alpha$ is a projection in $\prod_n\cB(\ell_2(X_n))$.
\end{assumption}

We now prove the easier part of Theorem \ref{Thm.Inclusion.QL.Algebras}: we show that $p_\alpha$ is in $\mathrm{QL}_\alpha(X)$ as long as $X$ in Assumption \ref{Assumption.1} is a coarse disjoint union of expander graphs. The proof is inspired by \cite[Theorem B]{Ozawa2023}.

\begin{proposition}\label{Prop.palpha.is.inQLalpha.exp}
   Assume the  setting of Assumption \ref{Assumption.1} and moreover that  $X=\bigsqcup_nX_n$ is a coarse disjoint union of expander graphs. Then,  $p_\alpha$ is in $\mathrm{QL}_\alpha(X)$.
\end{proposition}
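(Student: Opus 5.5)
We will show directly that $\eps_{p_\alpha}(r)=O(r^{-\alpha})$, which is stronger than membership in $\mathrm{QL}_\alpha(X)$. As in the proof of Theorem \ref{Thm.Exp.decay.Still.Contains}, fix $\kappa>1$ with
\[\min\left\{\frac{|A|}{|X_n|},\frac{|B|}{|X_n|}\right\}\leq \kappa^{-\frac{d(A,B)}{2}}\]
for all $n$ and all $A,B\subseteq X_n$. Since $p_\alpha\in\prod_n\cB(\ell_2(X_n))$, for $A,B\subseteq X$ with $d(A,B)\geq r$ we have
\[\|\chi_Ap_\alpha\chi_B\|=\sup_{n\geq n_0}\|\chi_{A_n}p_{n,\alpha}\chi_{B_n}\|,\]
where $A_n=A\cap X_n$ and $B_n=B\cap X_n$. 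It therefore suffices to bound each block by $C'r^{-\alpha}$ with $C'$ independent of $n$, $A$ and $B$. For small $r$ (say $r\leq r_0$) this is trivial, since $\|p_\alpha\|\leq 1$; so we may assume $r$ is large.

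Fix $n$ with $A_n,B_n$ both nonempty. First, the presence of a pair $x\in A_n$, $y\in B_n$ with $d(x,y)\geq r$ forces $|X_n|\geq\kappa^{r/2}$, exactly as in \eqref{Eq.s.uva.1}, and hence
\[\log|X_n|\geq \tfrac{r}{2}\log\kappa.\]
Second, by the expander inequality we may swap roles if necessary (using $\|\chi_{A_n}p\chi_{B_n}\|=\|\chi_{B_n}p\chi_{A_n}\|$, as $p$ is self-adjoint) and assume $|A_n|\leq \kappa^{-r/2}|X_n|$. Then
\[\|\chi_{A_n}p_{n,\alpha}\chi_{B_n}\|\leq\|\chi_{A_n}p_{n,\alpha}\|\leq C\sqrt{\frac{1}{(\log|X_n|)^{2\alpha}}+\frac{|A_n|}{|X_n|}\log\left(\frac{e|X_n|}{|A_n|}\right)}\]
by Assumption \ref{Assumption.1}.

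Now we estimate the two terms under the square root. Since $\log|X_n|\geq \tfrac{r}{2}\log\kappa$, the first term is at most $(\tfrac{2}{\log\kappa})^{2\alpha}r^{-2\alpha}$. For the second term, the function $t\mapsto t\log(e/t)$ is increasing on $(0,1]$, so with $t=|A_n|/|X_n|\leq\kappa^{-r/2}$ it is at most $\kappa^{-r/2}(1+\tfrac{r}{2}\log\kappa)$. This decays exponentially in $r$, so it is bounded by a constant times $r^{-2\alpha}$. Taking square roots gives $\|\chi_{A_n}p_{n,\alpha}\chi_{B_n}\|\leq C'r^{-\alpha}$ uniformly in $n$, and hence $\eps_{p_\alpha}(r)\leq C'r^{-\alpha}$ for all $r>0$, as desired.

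The step needing the most care is the first one: the polynomial rate comes entirely from the lower bound $\log|X_n|\gtrsim r$. This bound is what converts the dimension-driven term $(\log|X_n|)^{-2\alpha}$ into $r^{-2\alpha}$; the square root then halves the exponent $2\alpha$ to $\alpha$, which is exactly why Assumption \ref{Assumption.1} uses $2\alpha$. Everything else is routine bookkeeping, including absorbing the constant $e$ and the cases with small $r$ into $C'$.
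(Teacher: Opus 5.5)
Your proposal is correct and follows essentially the same route as the paper. The expander inequality applied to singletons gives $\log|X_n|\geq \tfrac{r}{2}\log\kappa$; the paper phrases this via $\mathrm{diam}(X_n)\leq 2\log|X_n|/\log\kappa$ and $r\leq\mathrm{diam}(X_n)$. This bound turns $(\log|X_n|)^{-2\alpha}$ into $O(r^{-2\alpha})$, while the swap to $|A_n|\leq\kappa^{-r/2}|X_n|$ and the monotonicity of $t\log(e/t)$ make the second term exponentially small, so taking square roots yields $\eps_{p_\alpha}(r)=O(r^{-\alpha})$ exactly as in the paper.
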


\begin{proof}
We start fixing $\kappa>1$ such that for all  $n\in\N$ and all $A,B\subseteq X_n$, we have
\[\min\left\{\frac{|A |}{|X_n|}, \frac{|B|}{|X_n|}\right\}\leq \kappa^{-\frac{d(A,B)}{2}}.\]
Applying this to two singletons in each $X_n$ realizing the diameter of $X_n$, we get that ${1}/{|X_n|}$ is at most  $\kappa^{-\mathrm{diam}(X_n)/2}$, i.e., 
\begin{equation}\label{Eq.t.t.e.tb}\mathrm{diam}(X_n)\leq \frac{2\log |X_n|}{\log(\kappa)}.\end{equation}

Fix now $r>0$ and $A,B\subseteq X$ with $d(A,B)\geq r$. For each $n\in\N$, let \[A_n=A\cap X_n\ \text{ and }\ B_n=B\cap X_n.\] Since
\[
\|\chi_Ap_\alpha\chi_B\|=\sup_{n\geq n_0}\|\chi_{A_n}p_{n,\alpha}\chi_{B_n}\|,
\]
we only need to estimate each $\|\chi_{A_n}p_{n,\alpha}\chi_{B_n}\|$.
If either $A_n$ or $B_n$ is empty,  this operator is zero. So, we  assume otherwise from now on.

  As $A_n$ and $B_n$ are nonempty subsets of $X_n$ with $d(A_n,B_n)\geq r$, we have $r\leq\mathrm{diam}(X_n)$. By \eqref{Eq.t.t.e.tb},  
\begin{equation}\label{eq:diam}
\frac{1}{(\log(|X_n|))^{2\alpha}}\leq\frac{2^{2\alpha}}{(\log(\kappa))^{2\alpha}r^{2\alpha}}
\end{equation}
for all $n\geq n_0$.

By our choice of $\kappa$,  we can assume without loss of generality that   $|A_n|/|X_n|\leq\kappa^{-r/2}$; otherwise this inequality holds with $B_n$ instead of $A_n$ and the proof proceeds analogously. As the map $t\mapsto t\log(e/t)$ is increasing on $(0,1]$,
\begin{equation}\label{Eq.ddqd.q}
\frac{|A_n|}{|X_n|}\log\left(\frac{e|X_n|}{|A_n|}\right)\leq\kappa^{-\frac r2}\log\left(e\kappa^{\frac r2}\right)=\left(1+\frac{r\log\kappa}{2}\right)\kappa^{-\frac r2}.
\end{equation}
By Assumption \ref{Assumption.1},    \eqref{eq:diam} and \eqref{Eq.ddqd.q} give that
\[\|\chi_{A_n}p_{n,\alpha}\|\leq C\sqrt{\frac{2^{2\alpha}}{(\log(\kappa))^{2\alpha}r^{2\alpha}}+ \left(1+\frac{r\log\kappa}{2}\right)\kappa^{-\frac r2}}.\]
In particular, there is $C'>0$ independent of $r$ such that
\[\|\chi_{A_n}p_{n,\alpha}\|\leq C'r^{-\alpha}\]
for all $n\geq n_0$. As $\|\chi_{A_n}p_{n,\alpha}\chi_{B_n}\|\leq \|\chi_{A_n}p_{n,\alpha}\|$, this shows that $\eps_{p_\alpha}(r)\leq C'r^{-\alpha}$ as desired.  \end{proof}

We are left to show that $p_\alpha$ does not belong to $\mathrm{QL}_\beta(X)$ if $\alpha<\beta$; this will conclude the proof of Theorem \ref{Thm.Inclusion.QL.Algebras}. We start with two lemmas; the first of them is a simple statement about traces of operators. In words, we know that traces of projections are simply their ranks. The lemma below gives an explicit approximation for the trace of a self-adjoint operator approximating  a projection.

\begin{lemma}\label{lem:trace}
Let $H$ be a finite dimensional Hilbert space, $\delta\in (0,1)$, $p\in\B(H)$ be a projection, and $a\in\B(H)$ be self-adjoint with $\|a-p\|\leq \delta$. Then
\[
\mathrm{Tr}(a^{2m})\geq \rank(p)(1-\delta)^{2m}\ \text{for all}\  m\in\N.
\]
\end{lemma}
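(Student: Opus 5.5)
The plan is to compare the eigenvalues of $a$ with those of $p$ via Weyl's perturbation inequality, and then use that $\mathrm{Tr}(a^{2m})$ is a sum of nonnegative terms. Since $a$ and $p$ are self-adjoint operators on the finite dimensional space $H$, list their eigenvalues in decreasing order, with multiplicity, as $\lambda_1(a)\geq\dots\geq\lambda_N(a)$ and $\lambda_1(p)\geq\dots\geq\lambda_N(p)$, where $N=\dim H$. As $p$ is a projection, $\lambda_j(p)=1$ for $j\leq \rank(p)$ and $\lambda_j(p)=0$ otherwise.

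By Weyl's inequality (a consequence of the Courant--Fischer min-max characterization), $|\lambda_j(a)-\lambda_j(p)|\leq\|a-p\|\leq\delta$ for every $j$. Hence $\lambda_j(a)\geq 1-\delta>0$ for all $j\leq\rank(p)$.

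Next, $a^{2m}$ has eigenvalues $\lambda_j(a)^{2m}$, all of which are nonnegative. Therefore
\[
\mathrm{Tr}(a^{2m})=\sum_{j=1}^N\lambda_j(a)^{2m}\geq\sum_{j=1}^{\rank(p)}\lambda_j(a)^{2m}\geq\rank(p)(1-\delta)^{2m}.
\]
In the last step we use $\lambda_j(a)\geq 1-\delta> 0$, which is where the hypothesis $\delta<1$ enters.

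There is no genuine obstacle here; the only point requiring care is invoking Weyl's inequality correctly. If one wishes to avoid it, an alternative is the following. Let $q$ be the spectral projection of $a$ for $[1-\delta,\infty)$. If $\rank q<\rank p$, there would be a unit vector $\xi\in pH\cap(1-q)H$. Since $a$ is self-adjoint and $\xi$ lies in the range of its spectral projection for $(-\infty,1-\delta)$, we get $\langle a\xi,\xi\rangle<1-\delta$. On the other hand, $\langle a\xi,\xi\rangle\geq\langle p\xi,\xi\rangle-\delta=1-\delta$, a contradiction. Hence $\rank q\geq\rank p$, and the trace bound follows exactly as above.
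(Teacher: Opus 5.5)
Your proof is correct, but it takes a different route from the paper. The paper never compares eigenvalues. It fixes a unit vector $\xi\in\mathrm{ran}(p)$ and notes that $\|a\xi\|\geq 1-\delta$. It then applies Jensen's inequality to the spectral measure of $a^2$ at $\xi$, obtaining $\langle a^{2m}\xi,\xi\rangle\geq\langle a^2\xi,\xi\rangle^m=\|a\xi\|^{2m}\geq(1-\delta)^{2m}$. Finally, it computes the trace in an orthonormal basis whose first $\rank(p)$ vectors span $\mathrm{ran}(p)$ and discards the remaining nonnegative diagonal terms.

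That argument is more elementary: it uses only the spectral theorem and Jensen, with no eigenvalue ordering or min-max. It also needs only the one-sided hypothesis $\|(a-p)p\|\leq\delta$, and its vector-wise estimate would survive verbatim in infinite dimensions, with the trace taking values in $[0,\infty]$.

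Your Weyl-inequality argument yields a stronger spectral conclusion: $a$ has at least $\rank(p)$ eigenvalues, counted with multiplicity, in $[1-\delta,1+\delta]$. This gives $\mathrm{Tr}(f(a))\geq\rank(p)\min_{[1-\delta,1+\delta]}f$ for any nonnegative $f$, not only $f(t)=t^{2m}$.

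Your min-max-free alternative is also sound, including the dimension count producing $\xi\in pH\cap(1-q)H$ and the strict inequality $\langle a\xi,\xi\rangle<1-\delta$, which holds because the spectrum is finite. It in fact needs only the quadratic-form bound $\langle a\xi,\xi\rangle\geq 1-\delta$ for unit $\xi\in\mathrm{ran}(p)$.
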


\begin{proof}
Let $E$ be the spectral measure given by the Spectral Theorem for the positive  operator $a^2$ (see \cite{rudin1991functional}). Then, for a unit vector $\xi\in \mathrm{ran}(p)$, we have
\[\langle a^{2k}\xi,\xi\rangle=\int_{\sigma(a^2)} t^kdE_{\xi,\xi}(t)\] for all $k\in\N$. Since \[\|a\xi\|\geq\|p\xi\|-\|(a-p)\xi\|\geq 1-\delta\] and $t\mapsto t^m$ is convex on $\sigma(a^2)\subseteq [0,\infty)$, Jensen's inequality gives
\begin{align*}
\langle a^{2m}\xi,\xi\rangle&=\int_{\sigma(a^2)} t^m\,dE_{\xi,\xi}(t)\\
&\geq\left(\int_{\sigma(a^2)} t\,dE_{\xi,\xi}(t)\right)^m\\
&=\langle a^2\xi,\xi\rangle^m\\
&=\|a\xi\|^{2m}\\
&\geq(1-\delta)^{2m}.
\end{align*}
Letting $(e_i)_{i=1}^{\dim H}$ be an orthonormal basis of $H$ whose first $\rank(p)$ elements form a basis of $\mathrm{ran}(p)$,  this gives that
\begin{align*}
\mathrm{Tr}(a^{2m})&=\sum_{i=1}^{\dim H}\langle a^{2m}e_i,e_i\rangle\\
&\geq\sum_{i=1}^{\rank(p)}\langle a^{2m}e_i,e_i\rangle\\
&\geq\rank(p)(1-\delta)^{2m}.
\end{align*}
This finishes the proof.\end{proof}

For the next lemma, we use the following standard notation: given a metric space $(X,d)$, $x\in X$, and $r>0$, we write 
\[B(x,r)=\{y\in X\mid d(x,y)\leq r\}.\]
    \begin{lemma}
\label{lem:compression}
Let $X$ be a metric space,   $a\in\B(\ell_2(X))$, $x\in X$, $r>0$, $m\in\N$, and let $A=B(x,m r)$. Then
\[
\|a^m\delta_x\|\leq\|\chi_Aa\chi_A\|^m+m\|a\|^{m-1}\varepsilon_a(r).
\]
    \end{lemma}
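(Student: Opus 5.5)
We will prove the estimate by a telescoping argument: at each step $a$ is replaced by its compression $\chi_A a\chi_A$, and the error of each replacement is controlled by the quasi-locality modulus. Put $A_j=B(x,jr)$ for $j=0,\dots,m$, so that $A_0\ni x$ and $A_m=A$. The key geometric input is the observation
\[
d(A_j, X\setminus A_{j+1})\geq r \quad\text{for } j<m .
\]
Indeed, if $d(x,y)\le jr$ and $d(x,z)>(j+1)r$, then $d(y,z)>r$. Hence
\[
\|\chi_{X\setminus A_{j+1}}\,a\,\chi_{A_j}\|\le \eps_a(r).
\]
Strictly, $\eps_a(r)$ is defined through sets at distance $\ge r$, and the inequality $d(y,z)>r$ certainly gives this.

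Write $c=\chi_A a\chi_A$. The goal is to show by induction on $k\le m$ that
\[
\|a^k\delta_x-c^k\delta_x\|\le k\|a\|^{k-1}\eps_a(r),
\]
with the additional bookkeeping fact that $c^k\delta_x$ is supported in $A_k$. The support claim is the easier half. Since $c^{k-1}\delta_x\in\ell_2(A_{k-1})$, the vector $a c^{k-1}\delta_x$ differs from $\chi_{A_k}ac^{k-1}\delta_x$ by at most $\eps_a(r)\|c^{k-1}\delta_x\|\le \eps_a(r)\|a\|^{k-1}$. Moreover, because $A_k\subseteq A$,
\[
\chi_{A_k} a c^{k-1}\delta_x=\chi_{A_k}\chi_A a\chi_A c^{k-1}\delta_x .
\]
This is awkward because it lands in $A_k$ but is not literally $c^k\delta_x$. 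So instead of projecting onto $A_k$, I will compare $ac^{k-1}\delta_x$ with $\chi_A a c^{k-1}\delta_x=c^k\delta_x$, using $A_k\subseteq A$. Then
\[
\|ac^{k-1}\delta_x-c^k\delta_x\|=\|\chi_{X\setminus A}\,a\,\chi_{A_{k-1}}c^{k-1}\delta_x\|\le \eps_a(r)\|a\|^{k-1},
\]
since $d(A_{k-1},X\setminus A)\ge d(A_{k-1},X\setminus A_k)\ge r$. The bound $\|c^{k-1}\|\le\|a\|^{k-1}$ is used here.

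For the support claim, it is not actually true that $c^k\delta_x$ lies in $A_k$: $c$ spreads over all of $A$. I expect this to be the main obstacle, and I will handle it by redefining the intermediate vectors. Set $v_0=\delta_x$ and $v_k=\chi_{A_k}av_{k-1}$, so that $v_k\in\ell_2(A_k)$ and $\|v_k\|\le\|a\|^k$. Then
\[
\|av_{k-1}-v_k\|=\|\chi_{X\setminus A_k}a\chi_{A_{k-1}}v_{k-1}\|\le \eps_a(r)\|a\|^{k-1}.
\]
Telescoping gives
\[
a^m\delta_x-v_m=\sum_{k=1}^m a^{m-k}\bigl(av_{k-1}-v_k\bigr),
\]
whence
\[
\|a^m\delta_x-v_m\|\le \sum_{k=1}^m \|a\|^{m-k}\|a\|^{k-1}\eps_a(r)=m\|a\|^{m-1}\eps_a(r).
\]

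It remains to bound $\|v_m\|$ by $\|\chi_Aa\chi_A\|^m$. Since $A_k\subseteq A$ and each $v_{k-1}$ is supported in $A_{k-1}\subseteq A$, we have
\[
v_k=\chi_{A_k}\,(\chi_Aa\chi_A)\,v_{k-1}.
\]
Therefore $\|v_k\|\le\|\chi_Aa\chi_A\|\,\|v_{k-1}\|$, and inductively $\|v_m\|\le\|\chi_Aa\chi_A\|^m$. Combining this with the telescoping bound yields the stated inequality.
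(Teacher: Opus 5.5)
Your final argument is correct and is essentially the paper's proof: the vectors $v_k=\chi_{A_k}av_{k-1}$, the estimate $\|\chi_{X\setminus A_k}a\chi_{A_{k-1}}\|\le\varepsilon_a(r)$, the telescoping identity, and the factorization $v_k=\chi_{A_k}(\chi_Aa\chi_A)v_{k-1}$ all match. You should delete the abandoned detour through $c^k\delta_x$; its support claim fails, as you noticed yourself, and it plays no role in the final proof.
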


 \begin{proof}
For each $j\in\{0,\dots,m\}$, let $A_j=B(x,j r)$; so $A_0=\{x\}$ and $A_m=A$. Define $(\xi_j)_{j=0}^m$ recursively by letting  \[\xi_0=\delta_x \ \text{ and }\ \xi_j=\chi_{A_j}a\xi_{j-1}\] for all $j\in\{1,\dots,m\}$. As each $\xi_{j-1}$ is supported in $A_{j-1}\subseteq A$, we have
\[
\xi_j=\chi_{A_j}a\chi_{A_{j-1}}\xi_{j-1}=\chi_{A_j}\big(\chi_Aa\chi_A\big)\chi_{A_{j-1}}\xi_{j-1}.
\]
Hence,   $\|\xi_j\|\leq\|\chi_Aa\chi_A\|\,\|\xi_{j-1}\|$ and  $\|\xi_j\|\leq\|a\|\,\|\xi_{j-1}\|$, and, by induction,
\begin{equation}\label{eq:xi}
\|\xi_m\|\leq\|\chi_Aa\chi_A\|^m\quad\text{and}\quad\|\xi_j\|\leq\|a\|^j
\end{equation}
for all $j\in \{0,\ldots, m\}$.
Therefore,
\begin{equation}\label{Eq.t.w.m.d.o.a}
    \|a^m\delta_x\|\leq \|\xi_m\|+\|a^m\delta_x-\xi_m\|\leq \|\chi_Aa\chi_A\|^m+\|a^m\delta_x-\xi_m\|
\end{equation}
and we are left to estimate $\|a^m\delta_x-\xi_m\|$.

A telescoping sum gives
\[
a^m\delta_x-\xi_m=\sum_{j=1}^m\big(a^{m-j+1}\xi_{j-1}-a^{m-j}\xi_j\big)=\sum_{j=1}^m a^{m-j}\big(a\xi_{j-1}-\xi_j\big).
\]
Since $\xi_{j-1}$ is supported in $A_{j-1}$,
\[
a\xi_{j-1}-\xi_j=\chi_{X\setminus A_j}a\xi_{j-1}=\chi_{X\setminus A_j}a\chi_{A_{j-1}}\xi_{j-1}
\]
and we have
\begin{equation}\label{eq:xi.1}
a^m\delta_x-\xi_m=\sum_{j=1}^m a^{m-j}\chi_{X\setminus A_j}a\chi_{A_{j-1}}\xi_{j-1}.
\end{equation}
Notice that, if $y\in A_{j-1}$ and $z\in X\setminus A_j$, then \[d(y,z)\geq d(x,z)-d(x,y)>j r-(j-1)r=r.\] Hence $d(X\setminus A_j,A_{j-1})\geq r$ and $\|\chi_{X\setminus A_j}a\chi_{A_{j-1}}\|\leq\varepsilon_a(r)$. By \eqref{eq:xi} and \eqref{eq:xi.1}, this gives that
\begin{align*}
\|a^m\delta_x-\xi_m\|&\leq\sum_{j=1}^m\|a\|^{m-j}\,\|\chi_{X\setminus A_j}a\chi_{A_{j-1}}\|\,\|\xi_{j-1}\|\\
&\leq\sum_{j=1}^m\|a\|^{m-j}\varepsilon_a(r)\|a\|^{j-1}\\
&=m\|a\|^{m-1}\varepsilon_a(r).
\end{align*}
By \eqref{Eq.t.w.m.d.o.a}, this completes the proof of the lemma.
    \end{proof}

\begin{theorem}
    \label{Thm.QLthetaDoesNotContainP} In the setting of Assumption \ref{Assumption.1}, $p_\alpha$ is not in $\mathrm{QL}_{\beta}(X)$ for any $\beta\in (\alpha,\infty)$.
\end{theorem}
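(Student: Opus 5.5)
Suppose towards a contradiction that $p_\alpha\in\mathrm{QL}_\beta(X)$ for some $\beta>\alpha$. Then for $\delta=1/4$ we can pick $b$ with $\eps_b(r)\leq Dr^{-\beta}$ for all $r>0$ and $\|p_\alpha-b\|\leq 1/4$. We first replace $b$ by $a=\frac12\sum_{n\geq n_0}\chi_{X_n}(b+b^*)\chi_{X_n}$. Compressing to the blocks $X_n$ does not increase the quasi-locality modulus; indeed $\eps$ of a block-diagonal compression is bounded by $\eps_b$, since $\chi_A(\sum\chi_{X_n}b\chi_{X_n})\chi_B$ is an orthogonal sum of the pieces $\chi_{A\cap X_n}b\chi_{B\cap X_n}$. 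Self-adjointization is also harmless, since $\eps_{b^*}=\eps_b$. Finally, since $p_\alpha$ is block diagonal and self-adjoint, $\|a-p_\alpha\|\leq 1/4$. Writing $a_n=\chi_{X_n}a\chi_{X_n}$, we then have self-adjoint $a_n\in\cB(\ell_2(X_n))$ with $\|a_n-p_{n,\alpha}\|\leq 1/4$, $\|a_n\|\leq 2$, and $\eps_{a_n}(r)\leq Dr^{-\beta}$ uniformly in $n$.

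The lower bound comes from Lemma \ref{lem:trace}: for every $m$ we get $\mathrm{Tr}(a_n^{2m})\geq \rank(p_{n,\alpha})(3/4)^{2m}\approx |X_n|(\log|X_n|)^{-2\alpha}(3/4)^{2m}$. For the upper bound we write $\mathrm{Tr}(a_n^{2m})=\sum_{x\in X_n}\|a_n^m\delta_x\|^2$ and apply Lemma \ref{lem:compression} with $A=B(x,mr)$. This gives
\[
\|a_n^m\delta_x\|\leq \|\chi_Aa_n\chi_A\|^m+m2^{m-1}Dr^{-\beta}.
\]
We bound $\|\chi_Aa_n\chi_A\|\leq \|\chi_Ap_{n,\alpha}\|+1/4$ and use Assumption \ref{Assumption.1} to estimate $\|\chi_Ap_{n,\alpha}\|$ via $|A|\leq N_X(mr)$, which is bounded by the u.l.f.\ hypothesis for a fixed radius. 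Choose the radius $mr$ so that $|A|/|X_n|$ is tiny and the first term of that bound, $(\log|X_n|)^{-2\alpha}$, is also tiny. Then $\|\chi_Aa_n\chi_A\|\leq 1/2$, say, for large $n$. Summing over $x$ gives
\[
\mathrm{Tr}(a_n^{2m})\leq 2|X_n|\bigl(2^{-2m}+m^24^{m}D^2r^{-2\beta}\bigr).
\]

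Comparing the two bounds after dividing by $|X_n|$ yields
\[
(\log|X_n|)^{-2\alpha}(3/4)^{2m}\lesssim 2^{-2m}+m^24^mD^2r^{-2\beta}.
\]
The parameters must be chosen carefully: take $m$ of order $c\log\log|X_n|$, so that $(3/4)^{2m}$ and $4^m$ are only powers of $\log|X_n|$, and take $r$ a large power of $\log|X_n|$ divided by $m$. This is the main obstacle. It needs $N_X(mr)=o(|X_n|)$ while $r^{-2\beta}$ beats $(\log|X_n|)^{-2\alpha}$ times polylog factors. Since there is no growth control on $N_X$, I would instead take $r$ fixed but large and $m$ growing. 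The intended balance is presumably that $(3/4)^{2m}\cdot 2^{2m}=(9/4)^m$ grows polynomially in $\log|X_n|$ with an exponent exceeding $2\alpha$. The defect term $m^24^mr^{-2\beta}$ would then be controlled by picking $r=(\log|X_n|)^{\gamma}$ with $\alpha<\gamma\beta$, i.e.\ $\gamma$ between $\alpha/\beta$ and $1$. If the constants do not align, I would refine the choice of $\delta$ (closer to $0$) and the bound on $\|\chi_Aa_n\chi_A\|$ (closer to $\delta$) so that the exponential rates in $m$ separate. I would also use $\mathrm{diam}(X_n)$ or the ball cardinality of $B(x,mr)$ relative to $|X_n|$, keeping $mr$ below a small multiple of $\log|X_n|$ if only the u.l.f.\ exponential bound is available. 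The final contradiction is obtained by letting $n\to\infty$.
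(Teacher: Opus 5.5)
\textbf{Verdict: genuine gap.} Your framework matches the paper's: the self-adjoint block-diagonal reduction, the trace lower bound from Lemma \ref{lem:trace}, the upper bound from Lemma \ref{lem:compression}, $m\asymp\log\log|X_n|$, and $mr\lesssim\log|X_n|$. What is missing is the quantitative balancing that is the real content of the proof, and the choices you do fix cannot reach every $\beta>\alpha$.

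The underlying issue is that, with $m\asymp\log\log|X_n|$, every factor exponential in $m$ is a power of $\log|X_n|$. Such factors are of the same order as the main terms and cannot be discarded.

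With your constants ($\delta=1/4$, $\|a_n\|\le 2$, $\|\chi_Aa_n\chi_A\|\le 1/2$), a contradiction needs two things at once:
\begin{itemize}
\item $(9/4)^m\gg(\log|X_n|)^{2\alpha}$;
\item $r^{2\beta}\gg m^2(64/9)^m(\log|X_n|)^{2\alpha}$.
\end{itemize}
Write $m=c\log\log|X_n|$ with $c>2\alpha/\log(9/4)$ and $r=(\log|X_n|)^{\gamma}$. Then the second condition forces $\gamma\beta>\alpha+c\log(64/9)/2$, which is roughly $\gamma\beta>3.4\alpha$. But $mr\lesssim\log|X_n|$ forces $\gamma<1$. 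So your argument only gives the theorem for $\beta$ above about $3.4\alpha$. Your heuristic that $\alpha<\gamma\beta$ suffices silently drops the $4^m$ factor.

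Your fallback of fixing $r$ and letting $m$ grow fails outright, because the defect term $m2^{m-1}Dr^{-\beta}$ then grows exponentially in $m$. Also, contrary to what you say, there is growth control. Each $X_n$ is a graph of degree at most $k$, so $|B(x,R)\cap X_n|\le k^{R+1}$. This is exactly what turns the requirement on $|A|$ into the constraint $mr\le \log|X_n|/(2\log k)$.

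\textbf{The missing idea.} You must use the full strength of $p_\alpha\in\mathrm{QL}_\beta(X)$: for every $\delta>0$ there is an approximant $a$ with $\|a-p_\alpha\|\le\delta$. All constants must then be tied to $\delta$. The argument runs as follows.
\begin{itemize}
\item Bound $\|a_n\|\le 1+\delta$.
\item For $|A|\le k\sqrt{|X_n|}$ and $n$ large, bound $\|\chi_Aa_n\chi_A\|\le\delta+\|\chi_Ap_{n,\alpha}\|^2\le 2\delta$.
\item Choose $r$ with $m(1+\delta)^{m-1}Cr^{-\beta}\le\delta^m$. This gives $\|a_n^m\delta_x\|\le 2(2\delta)^m$, and comparing traces yields $\bigl(\frac{1-\delta}{2\delta}\bigr)^{2m}\le 8(\log|X_n|)^{2\alpha}$.
\item Take $m=\bigl\lfloor\beta'\log\log|X_n|/\log\frac{1+\delta}{\delta}\bigr\rfloor$ for some $\beta'\in(\alpha,\beta)$. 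Then $r$ is $(\log|X_n|)^{\beta'/\beta}$ up to polylogarithmic factors, so $mr\le\log|X_n|/(2\log k)$ holds for $n$ large and $\delta$ small.
\item Letting $n\to\infty$ with $\delta$ fixed gives $\beta'\log\frac{1-\delta}{2\delta}\big/\log\frac{1+\delta}{\delta}\le\alpha$. This order of limits matters, because the constant in $\eps_a(r)\le Cr^{-\beta}$ depends on $\delta$.
\end{itemize}

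The decisive point is that the ratio of the per-step gain $\log\frac{1-\delta}{2\delta}$ to the per-step cost $\log\frac{1+\delta}{\delta}$ tends to $1$ as $\delta\to0$. Letting $\delta\to0$ therefore gives $\beta'\le\alpha$, a contradiction for every $\beta>\alpha$. Equivalently, $\delta$ must be chosen small depending on $\beta/\alpha$.

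Your closing remark about refining $\delta$ points in this direction. However, the proposal neither identifies this ratio-to-$1$ mechanism nor carries it out, and the parameter choices it actually commits to do not prove the statement.
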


\begin{proof}
    Fix $\beta\in (\alpha,\infty)$. Let us show that $p_\alpha$ is not in $\mathrm{QL}_{\beta}(X)$, i.e., that $p_\alpha$ is not in the closure of the $^*$-algebra of all operators $a\in \cB(\ell_2(X))$ such that $\eps_a(r)= O(r^{-\beta})$. For this, fix such $a$    and let us estimate $\|a-p_\alpha\|$. Recall that, by the definition of $p_\alpha$ in Assumption \ref{Assumption.1}, 
    \[p_\alpha=\sum_{n\geq n_0}p_{n,\alpha},\]
    where $n_0$ is selected precisely in the assumption. 
     We start   noticing that there is no loss of generality in assuming that $a$ is self-adjoint and that \begin{equation}\label{Eq.a.formula.self.ad.blocks}
    a=\SOTh\sum_{n\geq n_0}a_n.\end{equation}
    Indeed, replacing $a$ by $a'=\SOTh\sum_{n\geq n_0}\chi_{X_n}b\chi_{X_n}$, where $b=(a+a^*)/2$, we would  have that \[\|a'-p_\alpha\|\leq \|a-p_\alpha\| \ \text{ and }\ \eps_{a'}\leq \eps_a.\] So, we assume from now on that $a$ is self-adjoint and that \eqref{Eq.a.formula.self.ad.blocks} holds. Notice that
    \[\|a_n-p_{n,\alpha}\|\leq \|a-p_\alpha\| \ \text{ and }\ \eps_{a_n}(r)\leq \eps_a(r)\]
for all $n\geq n_0$ and $r>0$.

    Let $\delta\in(0,1/3)$ with $\|a-p_\alpha\|\leq\delta$ and let us show that $\delta$ cannot be arbitrarily small. This will be done by contradiction. Precisely, we show that if $\delta$ can be taken arbitrarily small, then $\beta\leq \alpha$.  The proof will proceed by finding, for each $n\geq n_0$, a lower and an upper estimate for the trace of powers of the operators $a_n$ and noticing that, for $n$ sufficiently large, these estimates force $\beta$ to be at most $\alpha$.  We start with the lower estimate. For that, given $n\geq n_0$ and  $m\in\N$, Lemma \ref{lem:trace} applied to the   $p_{n,\alpha}$ and $a_n$ gives that
    \begin{equation}\label{Eq.Trace.Lower.bound}\mathrm{Tr}\left(a_n^{2m}\right)\geq \mathrm{rank}(p_{n,\alpha})(1-\delta)^{2m}\geq \frac{|X_n|}{2(\log(|X_n|))^ {2\alpha}}(1-\delta)^{2m},\end{equation}
    here we use the condition on the rank of $p_{n,\alpha}$ in Assumption \ref{Assumption.1}. This is the lower estimate we shall need for each $n\geq n_0$.

We now turn to finding an upper estimate for the left-hand side of \eqref{Eq.Trace.Lower.bound}. As
    \[\mathrm{Tr}(a_n^{2m})=\sum_{x\in X_n}\langle a_n^{2m}\delta_x,\delta_x\rangle=\sum_{x\in X_n}\|a_n^m\delta_x\|^2,\]
this reduces to finding upper bounds for each   $\|a_n^m\delta_x\|$. As $\eps_a(r)= O(r^{-\beta})$,   increasing $C$ if necessary, we have that  \[\eps_a(r)\leq Cr^{-\beta} \ \text{ for all }\ r>0.\]
Then, as  \[\|a_n\|\leq \|p_{n,\alpha}\|+\|p_{n,\alpha}-a_n\|\leq 1+\delta,\]  Lemma \ref{lem:compression}
implies that, for each $n\geq n_0$ and $m\in\N$, $r>0$, and $x\in X_n$, we have
\begin{equation}\label{Eq.qweqweqwe}\|a^m_n\delta_x\|\leq\|\chi_Aa_n\chi_A\|^m+m(1+\delta)^{m-1}Cr^{-\beta},\end{equation}
where $A=B(x,mr)\cap X_n$.

In order to estimate \eqref{Eq.qweqweqwe}, we will now have to choose appropriate $m$ and $r$  for each $n\in\N$; we shall denote each of them by $m_n$ and $r_n$, respectively. We start this with a claim which estimates the first term in the right-hand side of \eqref{Eq.qweqweqwe} and provides the first condition on $m_n$ and $r_n$ in terms of the cardinality of the set \begin{equation}\label{Eq.SetAn}A_{n,x}=B(x,m_nr_n)\cap X_n.\end{equation}
For this,   fix $k\in\N$ with $k\geq2$ to be such that for each $n\in\N$, each element in $X_n$ is connected to at most $k$ other elements of $X_n$ --- such $k$ exists as $X$ is u.l.f.

\begin{claim}\label{Claim.ddd.q}
For  $n\geq n_0$ sufficiently large   and   $A\subseteq X_n$   nonempty with $|A|\leq k\sqrt{|X_n|}$, we have \[
\|\chi_Ap_{n,\alpha}\chi_A\|\leq C^2\left(\frac{1}{(\log(|X_n|))^ {2\alpha}}+\frac{k\log(e|X_n|)}{\sqrt{|X_n|}}\right).
\]
\end{claim}

\begin{proof}
 As $|A|/|X_n|\leq k/\sqrt{|X_n|}\leq 1$ and the function $t\mapsto t\log(e/t)$ is increasing on $(0,1]$, we have
\[
\frac{|A|}{|X_n|}\log\left(\frac{e |X_n|}{|A|}\right)\leq\frac{k}{\sqrt {|X_n|}}\log\left(\frac{e\sqrt{|X_n|}}{k}\right)\leq\frac{k\log(e |X_n|)}{\sqrt{|X_n|}}.
\]
Hence, by Assumption \ref{Assumption.1},
\begin{align*}
\|\chi_Ap_{n,\alpha}\chi_A\| &=\|\chi_Ap_{n,\alpha}\|^2\\
&\leq C^2\left(\frac{1}{(\log (|X_n|))^ {2\alpha}}+\frac{k\log(e|X_n|)}{\sqrt{|X_n|}}\right)
\end{align*}
and we are done.
\end{proof}

As $\lim_n|X_n|=\infty$, we can fix $n_1\in\N$ such that $n\geq n_1$ implies
\begin{equation}\label{Choiceofn}C^2\left(\frac{1}{(\log(|X_n|))^ {2\alpha}}+\frac{k\log(e|X_n|)}{\sqrt{|X_n|}}\right)\leq \delta.\end{equation}
In order to use the estimate in Claim \ref{Claim.ddd.q} for the sets $A_{n,x}$ in \eqref{Eq.SetAn},  we must choose, for each $n\geq n_1$, $m_n$ and $r_n$ such that, for all $x\in  X_n$, we have  $|A_{n,x}|\leq k\sqrt{|X_n|}$. For this, first notice that
\[|B(x,mr)\cap X_n|\leq 1+k+\ldots+k^{\lfloor mr\rfloor}\leq k^{mr+1}\]
for all $x\in X_n$, $m\in \N$ and $r>0$.
So, in order to apply  Claim \ref{Claim.ddd.q} for each of the sets $A_{n,x}$, it is sufficient to choose $m_n\in\N$ and $r_n>0$ such that
\begin{equation*} k^{m_nr_n}\leq \sqrt{|X_n|},\end{equation*}
or, equivalently, such that
\begin{equation}\label{Eq.mnrn.k.claim}{m_nr_n}\leq\frac{ \log(|X_n|)}{2\log(k)}.\end{equation}
For such $m_n$ and $r_n$, Claim \ref{Claim.ddd.q} and our choice of $n_1$ (see \eqref{Choiceofn}) give that
\begin{equation}\label{Eq.Consequence.Claim.mnrn}\|\chi_{A_{n,x}}a_n\chi_{A_{n,x}}\|\leq \|\chi_{A_{n,x}}(a_n-p_{n,\alpha})\chi_{A_{n,x}}\|+\|\chi_{A_{n,x}}p_{n,\alpha}\chi_{A_{n,x}}\|\leq 2\delta\end{equation}
for all $x\in X_n$.

Fix $n\geq n_1$ for now. Assume for the time being that $m_n\in\N$ has been chosen and let
\begin{equation}\label{Eq.Choice.of.rn.1}r_n=\left(m_n\left(\frac{1+\delta}{\delta}\right)^{m_n}C\right)^{1/\beta};\end{equation}
we will give the precise definition of $m_n$  later and show that, for this choice of $r_n$,   \eqref{Eq.mnrn.k.claim} is satisfied.  Noticing that  $r_n$ was chosen precisely so that
\begin{equation}\label{Eq.t.q.c.a.aa}m_n(1+\delta)^{m_n-1}Cr_n^{-\beta}\leq  \delta^{m_n},\end{equation}
it follows from  \eqref{Eq.qweqweqwe}, \eqref{Eq.Consequence.Claim.mnrn},  and \eqref{Eq.t.q.c.a.aa}  that
\[\|a^{m_n}_n\delta_x\|\leq 2 (2\delta)^{m_n}.\]
Therefore
\[\sum_{x\in X_n}\|a^{m_n}_n\delta_x\|^2\leq 4|X_n|(2\delta)^{2m_n}\]
and, by  \eqref{Eq.Trace.Lower.bound}, we have
\[\frac{|X_n|}{2(\log(|X_n|))^{2\alpha}}(1-\delta)^{2m_n}\leq 4|X_n|(2\delta)^{2m_n}. \]
Rearranging and simplifying the equation above, we have
\begin{equation}\label{Eq.1.sep.26.1.rain}\left(\frac{1-\delta}{2\delta}\right)^{2m_n}\leq 8(\log(|X_n|))^{2\alpha}.\end{equation}

Let now $\beta'\in (\alpha,\beta)$ and define
\[m_n=\left\lfloor \frac{\beta'\log(\log(|X_n|))}{\log\left(\frac{1+\delta}{\delta}\right)}\right\rfloor.\]
Suppose for now that $m_n$ and $r_n$ satisfy \eqref{Eq.mnrn.k.claim} and let us complete the proof. Indeed, it follows from a simple computation that the  left-hand side of
\eqref{Eq.1.sep.26.1.rain} becomes at least
\[\left(\frac{2\delta}{1-\delta}\right)^2(\log(|X_n|))^{2\beta'\frac{\log\left(\frac{1-\delta}{2\delta}\right)}{\log\left(\frac{1+\delta}{\delta}\right)}}.\]
As the term multiplying $2\beta'$ in the exponent above tends to $1$ as $\delta\to 0^ +$, \eqref{Eq.1.sep.26.1.rain} implies that $\beta'\leq \alpha$. As $\beta'$ was an arbitrary element in $(\alpha,\beta)$, we conclude that $\beta\leq \alpha$; a contradiction.

We are left to notice that for $\delta$ small enough and $n$ large enough, our choice of $m_n$ and $r_n$   satisfies \eqref{Eq.mnrn.k.claim}. We add the details here, but this is nothing but a simple estimate. Indeed, we start by fixing   $\delta_0>0$  such that
\[ \left(\frac{\beta'}{\log\left(\frac{1+\delta}{\delta}\right)}\right)^{1+1/\beta}\leq \frac{1}{2\log(k)}\]
for all $\delta\in (0,\delta_0)$. Increasing our choice of $n_1$ if necessary, we can also assume that
\[ C^{1/\beta}(\log(\log(|X_n|)))^{1+1/\beta}\leq (\log(|X_n|))^{1-\beta'/\beta}\]
for all $n\geq n_1$. Finally, since
\[\left(\frac{1+\delta}{\delta}\right)^{\frac{\beta'\log(\log(|X_n|))}{\beta \log\left(\frac{1+\delta}{\delta}\right)}}= (\log(|X_n|))^{\beta'/\beta},\]
this shows that
\[m_nr_n\leq \frac{\log(|X_n|)}{2\log(k)}\]
for all $\delta\in (0,\delta_0)$ and $n\geq n_1$.
\end{proof}

  \begin{proof}
      [Proof of Theorem \ref{Thm.Inclusion.QL.Algebras}]
      This follows from Proposition \ref{Prop.palpha.is.inQLalpha.exp} and Theorem \ref{Thm.QLthetaDoesNotContainP}.
  \end{proof}

\begin{proof}
    [Proof of Theorem \ref{thmD}]
    By Theorem \ref{Thm.Inclusion.QL.Algebras}, we have that 
    \[\mathrm{QL}_{\exp}(X)\subsetneq \mathrm{QL}_{\beta}(X)\subsetneq \mathrm{QL}_{\alpha}(X)\]
    for all $\alpha,\beta>0$ with $\alpha<\beta$. As $\mathrm{QL}_{\beta}(X)\subseteq\cstql(X)$ for all $\beta>0$, this gives that 
        \[\mathrm{QL}_{\exp}(X)\subsetneq \mathrm{QL}_{\beta}(X)\subsetneq \mathrm{QL}_{\alpha}(X)\subsetneq \cstql(X)\]
    for all $\alpha,\beta>0$ with $\alpha<\beta$. Finally, since $\prod_n\mathrm M_n$ does not embed into $\cstu(X)$ (see \cite[Theorem A]{Ozawa2023}) but it does embed into $\mathrm{QL}_{\exp}(X)$ (see Theorem \ref{Thm.Exp.decay.Still.Contains}), we conclude that 
    \[\cstu(X)\subsetneq \mathrm{QL}_{\exp}(X)\subsetneq \mathrm{QL}_{\beta}(X)\subsetneq \mathrm{QL}_{\alpha}(X)\subsetneq \cstql(X)\]
    for all $\alpha,\beta>0$ with $\alpha<\beta$ as desired.
\end{proof}

\section{A dynamical \cstar-algebra between the uniform Roe algebra and the quasi-local algebra}
\label{Section.Dynamical.algebra.between.URA.QL}
By Theorem \ref{thmB}, we have dynamical characterizations of $\cstu(X)$ and $\cstql(X)$ in terms of the regularity of the elements in $\cB(\ell_2(X))$ with respect to all coarse maps $h\colon X\to \R$. Precisely, $\cstql(X)$ is characterized as the points which are continuous for all such flows, and $\cstu(X)$ by the norm closure of the ones which are analytic  for all such flows. Between these two regularity conditions, there are of course many other conditions (e.g., Lipschitz, different orders of differentiability)  and they can be used to define dynamical \cstar-subalgebras (formally) in between $\cstu(X)$ and $\cstql(X)$.

We shall now study a dynamical  \cstar-subalgebra which is given by weakening the regularity condition which characterizes $\cstu(X)$. Moreover,   we show that this  algebra is strictly in between $\cstu(X)$ and $\cstql(X)$ for expander graphs. Contrary to the \cstar-algebras studied in Section \ref{Section.QL.algebras}, this algebra depends purely on the coarse geometry of $X$.

\begin{definition} \label{Defi.AP_band}
Let $X$ be a coarse space. For a given coarse map $h\colon X\to \R$, let $\AP_{\mathrm{strip}}[X,h]$ denote the  operators in $\cB(\ell_2(X))$ which are analytic on a strip for $\sigma_h$. We define
\[\AP_{\mathrm{strip}}(X)=\overline{\bigcap_{\substack{h\colon X\to \R\\ h\text{ is coarse}}}\AP_{\mathrm{strip}}[X,h]}^{\|\cdot\|}.\]
\end{definition}

For the same reasons as the ones presented in the paragraph immediately after Definition \ref{Def.AP.algebra.defi}, the   set inside the closure in the definition of $\AP_{\mathrm{strip}}(X)$   is a unital $^*$-subalgebra of $\cB(\ell_2(X))$ and, in particular,  $\AP_{\mathrm{strip}}(X)$ is a \cstar-algebra.

Together with the results in Section \ref{Section.QL.algebras}, the main result in this section shows that $\AP_{\mathrm{strip}}(X)$ sits strictly in between $\cstu(X)$ and $\cstql(X)$ whenever $X$ is a coarse disjoint union of expander graphs. For this, we will show that $\mathrm{AP}_{\mathrm{strip}}(X)$ is always contained in  $\mathrm{QL}_{\exp}(X)$ and that the converse inclusion holds for coarse disjoint unions of finite graphs (see Theorems \ref{Thm.Band.In.Led} and \ref{Thm.Led.In.Band}).

 \subsection{$\mathrm{AP}_{\mathrm{strip}}(X)$ is contained in  $\mathrm{QL}_{\exp}(X)$}
The following is the main result of this subsection.

\begin{theorem}\label{Thm.Band.In.Led}
Let $X$ be a u.l.f.\ metric space and let $a\in\cB(\ell_2(X))$ be analytic on a strip for $\sigma_h$ for every $1$-Lipschitz $h\colon X\to\R$. Then $\eps_a$ decays exponentially. In particular,
\[\mathrm{AP}_{\mathrm{strip}}(X)\subseteq \mathrm{QL}_{\exp}(X).\]
\end{theorem}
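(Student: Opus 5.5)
The plan is to extract, via a Baire category argument, a single strip width and a single norm bound that work simultaneously for a large family of $1$-Lipschitz maps. For a given pair $A,B$ with $d(A,B)\geq r$, we then feed in a suitable $h$ whose values on $A$ and on $B$ differ by an amount proportional to $r$.

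\emph{Compact parameter space.} Fix a base point $x_0\in X$ and let $\mathcal K$ be the set of $1$-Lipschitz maps $h\colon X\to\R$ with $h(x_0)=0$, with the topology of pointwise convergence. Since $X$ is countable (u.l.f.), $\mathcal K$ is a compact metrizable space by Tychonoff. Since $\sigma_{h+c}=\sigma_h$, nothing is lost by this normalization.

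\emph{Bounded extensions.} If $F$ witnesses analyticity of $a$ on the closed strip of width $\delta$ for $\sigma_h$, then $F(t+z)=\sigma_{h,t}(F(z))$ by Proposition \ref{Prop.xy.coordinates.ana.extension}. Hence $\|F(t+is)\|=\|F(is)\|$, and continuity on $[-\delta,\delta]$ shows that $F$ is bounded on the strip.

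\emph{The sets $\mathcal K_n$.} For $n\in\N$, let $\mathcal K_n$ consist of those $h\in\mathcal K$ such that
\[
\|[e^{\pm(h(x)-h(y))/n}\langle a\delta_y,\delta_x\rangle]_{x,y}\|\leq n,
\]
where the matrices are understood as bounded operators. By Proposition \ref{Prop.xy.coordinates.ana.extension}, these are the operators $F(\mp i/n)$, so the hypothesis and the boundedness above give $\mathcal K=\bigcup_n\mathcal K_n$.

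\emph{Closedness of $\mathcal K_n$.} If $h_j\to h$ pointwise with $h_j\in\mathcal K_n$, the matrices converge entrywise, and a uniformly bounded entrywise limit has norm at most $n$. So each $\mathcal K_n$ is closed. Note that $\mathcal K_n$ only records the two boundary operators; this suffices for the estimate below, so no analyticity needs to pass to the limit.

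\emph{Baire.} By the Baire category theorem some $\mathcal K_n$ has nonempty interior in $\mathcal K$. Thus there are $n$, a finite set $S\ni x_0$, some $h_1\in\mathcal K$ and $\eps>0$ such that every $h\in\mathcal K$ with $|h-h_1|<\eps$ on $S$ lies in $\mathcal K_n$.

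\emph{Improving the centre.} Replace $h_1$ by a better centre $h_0$ agreeing with $h_1$ on $S$ up to $\eps/2$, obtained as follows. Scale $h_1$ to $(1-\eta)h_1$ with $\eta>0$ small, take the McShane extension of its restriction to $S$, and clamp it to $[\min_S,\max_S]$. This $h_0$ is $(1-\eta)$-Lipschitz, bounded, and still in the open set. Set $\delta=1/n$ and $M=n e^{2\delta\|h_0\|_\infty}$.

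\emph{Perturbation.} Let $D=\mathrm{diam}(S)$, let $r>3D$, and let $A,B\subseteq X$ with $d(A,B)\geq r$. Both $A$ and $B$ cannot lie within $r/3$ of $S$, since otherwise $d(A,B)\leq 2r/3+D<r$. Say $d(A,S)\geq r/3$; the other case is symmetric with the roles of $A$ and $B$ (and the sign of $\delta$) exchanged. Put $g=\min\{d(\cdot,A),r/3\}$. This map is $1$-Lipschitz, vanishes on $A$, equals $r/3$ on $B$, and also equals $r/3$ on all of $S$. Hence
\[
h'=h_0+\eta g-\eta r/3
\]
is $1$-Lipschitz, coincides with $h_0$ on $S$, and (after subtracting the constant $h'(x_0)$) lies in $\mathcal K_n$.

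\emph{The estimate.} Consider the boundary operator $e^{\delta h'}ae^{-\delta h'}$, choosing the sign so that the entries from $A$ to $B$ are amplified. Conjugating by the bounded diagonals $e^{\pm\delta h_0}$ and compressing by $\chi_B$ and $\chi_A$ gives
\[
e^{\delta\eta r/3}\,\|\chi_B a\chi_A\|\leq M.
\]
Hence $\|\chi_Ba\chi_A\|\leq Me^{-\delta\eta r/3}$. Applying the same argument to $a^*$, which is also analytic on a strip for every $1$-Lipschitz $h$, handles the other order of $A$ and $B$. Together with the trivial bound $\eps_a\leq\|a\|$ for $r\leq 3D$, this gives exponential decay of $\eps_a$.

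\emph{Inclusion of algebras.} The inclusion $\mathrm{AP}_{\mathrm{strip}}(X)\subseteq\mathrm{QL}_{\exp}(X)$ follows, since $1$-Lipschitz maps are coarse and $\mathrm{QL}_{\exp}(X)$ is closed.

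\emph{Main obstacle.} The delicate step is the localization. Baire only provides an open set of $h$'s pinned on a finite set $S$, whereas we need $h$ to separate $A$ from $B$ by roughly $r$. The fix is to perturb by a truncated distance function that is constant on $S$ (which works because one of $A,B$ is far from $S$), and to use a bounded, slightly sub-$1$-Lipschitz centre $h_0$ so that the conjugation by $e^{\pm\delta h_0}$ costs only a constant.
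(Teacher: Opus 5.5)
Your argument is correct. The Baire category part is the same as the paper's: a compact space of normalized $1$-Lipschitz maps, closed sets defined by norm bounds on the boundary operators at $\mp i/n$, and an open set pinned on a finite set. Your localization step, however, is different.

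\emph{The paper's localization.} The paper keeps its Baire centre $h_0$ unchanged and passes to the finite set $G=\{x\mid d(x,I)<3\Delta\}$. With $g=d(\cdot,B)$ it glues
\[h_g=\min\{\min_{z\in I}(h_0(z)+d(\cdot,z)),\,g/2+c_g\},\]
which agrees with $h_0$ on $I$ and equals $g/2+c_g$ off $G$. Because this form is only available off $G$, it must split $\chi_Aa\chi_B$ into three pieces:
\begin{enumerate}
\item the piece supported off $G$, controlled by $E_{n_0}$;
\item the pieces touching $G$, controlled point by point using the strips of analyticity of the finitely many maps $\pm d(\cdot,z)$, $z\in G$.
\end{enumerate}

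\emph{Your localization.} You first regularize the centre to a bounded $(1-\eta)$-Lipschitz function. You then perturb it by $\eta\min\{d(\cdot,A),r/3\}$, which is constant on $S$ because one of $A,B$ is at distance at least $r/3$ from $S$. Boundedness of $h_0$ turns its contribution into the $r$-independent factor $e^{2\|h_0\|_\infty/n}$.

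\emph{Comparison.} Your route gives a single uniform estimate for all pairs $A,B$, with no decomposition and no auxiliary per-point strips. The price is the extra regularization of the centre (McShane extension plus clamping, or simply clamping $(1-\eta)h_1$). You also get a decay rate $\eta/(3n)$ that depends on the size of the Baire neighbourhood, but this is immaterial for the statement. Finally, your appeal to $a^*$ is superfluous, since $\mathcal K_n$ already records both signs.
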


 We need two preparatory lemmas before the proof of Theorem \ref{Thm.Band.In.Led}.

\begin{lemma}\label{Lemma.BandStructure}
Let $X$ be a set, $h\colon X\to\R$,    $a\in\cB(\ell_2(X))$ be analytic on a strip for $\sigma_h$, and let   $F$ witness this analyticity in the strip  $\{z\in \C\mid |\mathrm{Im}(z)|\leq \delta\}$, where   $\delta>0$.
 Then
 \[F(t+is)=\sigma_{h,t}(F(is))\] for all  $t\in\R$  and all  $\ |s|\leq\delta$. In particular,
    \[\sup\big\{\|F(z)\|\ \big|\ |\mathrm{Im}(z)|\leq\delta\big\}=\max_{|s|\leq\delta}\|F(is)\|<\infty.\]
\end{lemma}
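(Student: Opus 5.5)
The plan is to compare entries via Proposition \ref{Prop.xy.coordinates.ana.extension} and then recover the norm bound from the fact that each $\sigma_{h,t}$ is an isometry. Fix $t\in\R$ and $|s|\leq\delta$, and write $z=t+is$. By Proposition \ref{Prop.xy.coordinates.ana.extension}, for all $x,y\in X$,
\[\langle F(t+is)\delta_y,\delta_x\rangle=e^{i(t+is)(h(x)-h(y))}\langle a\delta_y,\delta_x\rangle=e^{it(h(x)-h(y))}\,e^{-s(h(x)-h(y))}\langle a\delta_y,\delta_x\rangle.\]
Applying the same proposition at the point $is$ gives $\langle F(is)\delta_y,\delta_x\rangle=e^{-s(h(x)-h(y))}\langle a\delta_y,\delta_x\rangle$.

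Next I compute the entries of $\sigma_{h,t}(b)$ for any bounded $b$. Directly from the definition,
\[\langle\sigma_{h,t}(b)\delta_y,\delta_x\rangle=\langle e^{ith}be^{-ith}\delta_y,\delta_x\rangle=e^{it(h(x)-h(y))}\langle b\delta_y,\delta_x\rangle.\]
Taking $b=F(is)$, which is a bounded operator since $F$ takes values in $\cB(\ell_2(X))$, the matrix coefficients of $\sigma_{h,t}(F(is))$ agree with those of $F(t+is)$. A bounded operator on $\ell_2(X)$ is determined by its matrix coefficients in the basis $(\delta_x)_x$, so $F(t+is)=\sigma_{h,t}(F(is))$.

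For the ``in particular'' part, each $\sigma_{h,t}$ is conjugation by a unitary and hence isometric. This gives $\|F(t+is)\|=\|F(is)\|$ for every $t\in\R$, so
\[\sup_{|\mathrm{Im}(z)|\leq\delta}\|F(z)\|=\sup_{|s|\leq\delta}\|F(is)\|.\]
The map $s\mapsto F(is)$ is norm continuous on the compact interval $[-\delta,\delta]$, because $F$ is continuous on the closed strip. Therefore the supremum on the right is attained and finite, so it equals $\max_{|s|\leq\delta}\|F(is)\|<\infty$. I do not expect a real obstacle here. The only point needing care is that Proposition \ref{Prop.xy.coordinates.ana.extension} applies on the closed strip, including its boundary lines; it does, since $F$ is continuous on the closed strip and the proposition is stated for every $z$ in it.
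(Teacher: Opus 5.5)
Your proposal is correct and follows essentially the same argument as the paper: an entrywise comparison via Proposition \ref{Prop.xy.coordinates.ana.extension} gives $F(t+is)=\sigma_{h,t}(F(is))$, and then the fact that each $\sigma_{h,t}$ is an isometry reduces the supremum to the segment $\{is\mid |s|\leq\delta\}$. The paper leaves implicit why this supremum is an attained, finite maximum; your explicit appeal to the continuity of $F$ on that compact segment supplies this detail.
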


\begin{proof}
By Proposition \ref{Prop.xy.coordinates.ana.extension}, for each $x,y\in X$, we have
\begin{align*}
\langle     F(t+is)\delta_y,\delta_x\rangle &=e^{i(t+is)(h(x)-h(y))}\langle a\delta_y,\delta_x\rangle\\
    &=e^{it (h(x)-h(y))}e^{-s(h(x)-h(y))}\langle a\delta_y,\delta_x\rangle\\
    &=\langle \sigma_{h,t}(F(is))\delta_y,\delta_x\rangle;
\end{align*}
hence the equality $F(t+is)=\sigma_{h,t}(F(is)) $ follows.

As  each $\sigma_{h,t}$ is an isometry, the equality just proven implies that the supremum  of the norm of $F$ happens in any segment $\{t+is\mid |s|\leq \delta\}$, $t\in \R$. In particular, this is so for $t=0$ and the final equality   follows.
\end{proof}

\begin{lemma}\label{Lemma.GapEstimate}
Let $X$ be a set, $h\colon X\to\R$, $\theta>0$, and $a\in\cB(\ell_2(X))$. Suppose the $X$-by-$X$ matrix \[\left[e^{\theta(h(x)-h(y))}\langle a\delta_y,\delta_x\rangle\right]_{x,y\in X}\] induces a bounded operator $b$ on $\ell_2(X)$. Then, for all nonempty $A,B\subseteq X$, we have
\[\|\chi_Aa\chi_B\|\leq e^{-\theta\left( \inf_{x\in A}h(x)- \sup_{x\in B}h(x)\right)}\|b\|.\]
\end{lemma}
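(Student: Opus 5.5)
The plan is to factor $\chi_Aa\chi_B$ through $b$ using diagonal multiplication operators. Since $e^{\theta h}$ may be unbounded, all operators will be restricted to finitely supported vectors, and a density argument will finish the proof. We may assume that $\alpha:=\inf_{x\in A}h(x)$ and $\beta:=\sup_{y\in B}h(y)$ are finite. If $\alpha=-\infty$ or $\beta=+\infty$, the right-hand side is $+\infty$ (with the convention $e^{+\infty}=\infty$) and there is nothing to prove.

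The key identity is entrywise. For $x\in A$ and $y\in B$,
\[
\langle a\delta_y,\delta_x\rangle=e^{-\theta h(x)}\,\langle b\delta_y,\delta_x\rangle\, e^{\theta h(y)}.
\]
Let $D_A$ be the diagonal operator with entries $e^{-\theta h(x)}\chi_A(x)$ and $D_B$ the diagonal operator with entries $e^{\theta h(y)}\chi_B(y)$. Both are bounded: $\|D_A\|=\sup_{x\in A}e^{-\theta h(x)}=e^{-\theta\alpha}$ and $\|D_B\|=\sup_{y\in B}e^{\theta h(y)}=e^{\theta\beta}$, using $\theta>0$.

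We then claim that $\chi_Aa\chi_B=D_A\,b\,D_B$. Both sides are bounded operators, so it suffices to compare the matrix entries $\langle\,\cdot\,\delta_y,\delta_x\rangle$. For $x\in A$ and $y\in B$ the entries agree by the identity above. Otherwise both entries vanish, since $D_A$ kills $\delta_x$ for $x\notin A$ (it is diagonal and self-adjoint) and $D_B\delta_y=0$ for $y\notin B$. Bounded operators with the same matrix coefficients coincide.

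It follows that
\[
\|\chi_Aa\chi_B\|\le\|D_A\|\,\|b\|\,\|D_B\|=e^{-\theta(\alpha-\beta)}\|b\|,
\]
which is the claim. The only point requiring care is that $e^{\pm\theta h}$ are genuinely unbounded in general. The plan handles this by working with the truncated diagonals $D_A$ and $D_B$, which are bounded precisely because $\alpha$ and $\beta$ are finite. This is the main obstacle, and it is disposed of by the case distinction in the first paragraph.
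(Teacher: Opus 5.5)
Your proof is correct and is essentially the paper's argument: the paper uses the same diagonal factorization. Your $D_A$ and $D_B$ are rescaled there by $e^{\theta\alpha}$ and $e^{-\theta\beta}$ into contractions $c_A,c_B$ satisfying $c_A(\chi_Ab\chi_B)c_B=e^{\theta(\alpha-\beta)}\chi_Aa\chi_B$. (Your opening mention of restricting to finitely supported vectors and a density argument is never used, since the truncated diagonals are already bounded.)
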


\begin{proof}
Write  $\alpha=\inf_{x\in A}h(x)$ and $\beta=\sup_{x\in B}h(x)$. Let $c_A,c_B\in \ell_\infty(X)$ be given by
\[c_A(x)=\left\{\begin{array}{cc}
 e^{-\theta(h(x)-\alpha)}  ,  & x\in A, \\
  0   & x\not\in A,
\end{array}\right.\] and \[  c_B(x)=\left\{\begin{array}{cc}
 e^{-\theta(\beta-h(x))} ,  & x\in B, \\
  0   & x\not\in B.
\end{array}\right.\] Then, comparing matrix entries, we have
\[c_A\left(\chi_Ab\chi_B\right)c_B=e^{\theta(\alpha-\beta)}\chi_Aa\chi_B.\]
As both $c_A$ and $c_B$ are contractions,   the result follows.
\end{proof}

\begin{proof}[Proof of Theorem \ref{Thm.Band.In.Led}]
For each $z\in X$, the maps $\pm d(\cdot ,z)$ are $1$-Lipschitz. By the hypothesis on $a$, for each $z\in X$, fix $\theta_z>0$ such that $a$ is analytic for both $\sigma_{  d(\cdot ,z)}$ and $\sigma_{-d(\cdot ,z)}$ on the strip $\{\lambda\in \C\mid |\mathrm{Im}(\lambda)|\leq \theta_z\}$. By Lemma \ref{Lemma.BandStructure}, we can pick $K_z>0$ such that the $X$-by-$X$ matrices
\[\left[e^{\pm\theta_z( d(x,z)-d(y,z))}\langle a\delta_{y},\delta_{x}\rangle\right]_{x,y\in X}\]
induce bounded operators of norm at most $K_z$. We will show that, for  appropriately chosen $n_0\in\N$ and finite   $G\subseteq X$, the constants
\begin{equation}\label{Eq.C.c.bwb.pedro}C=n_0+2\sum_{z\in G}K_z\ \text{ and }\ c=\min\left(\left\{\frac{1}{2n_0}\right\}\cup\{\theta_z\mid z\in G\}\right)\end{equation} satisfy that   $\eps_a(r)\leq Ce^{-cr}$ for all $r\geq 0$. Notice that, as $G$ is finite, $c$ is indeed strictly positive. The precise $n_0$ and $G$ will be chosen below.

Fix a base point $x_0\in X$ and let
\[L=\left\{h\colon X\to\R\mid h\text{ is $1$-Lipschitz and }h(x_0)=0\right\}.\]
Endow $L$ with the topology of pointwise convergence and notice this makes $L$ into a compact Hausdorff space. Indeed, Hausdorffness  is immediate and compactness follows by Tychonoff's theorem since $|h(x)|\leq d(x,x_0)$ for all $h\in L$. In particular, we can make use of Baire's theorem  for $L$. With that in mind, let us write $L$ as a countable union of closed sets.

 For each $h\in L$ and $n\in\N$, let $b_{h,n}$ be the $X$-by-$X$ matrix
\[\left[e^{\frac{h(x)-h(y)}{n}}\langle a\delta_y,\delta_x\rangle\right]_{x,y\in X}.\]
Notice that we are not claiming here that $b_{h,n}$ induces a bounded operator on $\ell_2(X)$. Let
\[E_n=\left\{h\in L\mid \|b_{h,n}\|\leq  n\right\}.\]

\begin{claim}
    Each $E_n$ is closed in $L$ and $L=\bigcup_nE_n$.
\end{claim}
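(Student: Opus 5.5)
We have to show two things: that each $E_n$ is closed in the pointwise topology of $L$, and that every $h\in L$ lies in some $E_n$. Throughout we interpret $\|b_{h,n}\|$ as the supremum of $\|\chi_Fb_{h,n}\chi_F\|$ over finite $F\subseteq X$. This is the norm of the bounded operator induced by $b_{h,n}$ when there is one, and $+\infty$ otherwise.

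\emph{Closedness.} Fix $n$ and a net $(h_\lambda)$ in $E_n$ converging pointwise to $h\in L$. For any finite $F\subseteq X$, the compression $\chi_Fb_{h_\lambda,n}\chi_F$ is a finite matrix whose entries $e^{(h_\lambda(x)-h_\lambda(y))/n}\langle a\delta_y,\delta_x\rangle$, for $x,y\in F$, converge to the corresponding entries of $\chi_Fb_{h,n}\chi_F$. Norm is continuous on finite matrices, so
\[\|\chi_Fb_{h,n}\chi_F\|=\lim_\lambda\|\chi_Fb_{h_\lambda,n}\chi_F\|\leq n.\]
Taking the supremum over finite $F$ shows that $b_{h,n}$ induces a bounded operator of norm at most $n$, that is, $h\in E_n$. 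This is the usual lower semicontinuity of the norm under entrywise convergence.

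\emph{Covering.} Let $h\in L$. By hypothesis, $a$ is analytic for $\sigma_h$ on a strip of some width $\delta>0$, witnessed by $F$. By Proposition \ref{Prop.xy.coordinates.ana.extension}, for $|s|\leq\delta$ the operator $F(is)$ has entries
\[e^{-s(h(x)-h(y))}\langle a\delta_y,\delta_x\rangle.\]
Taking $s=-1/n$ with $1/n\leq\delta$, this is exactly the matrix $b_{h,n}$, so $b_{h,n}$ is bounded. By Lemma \ref{Lemma.BandStructure}, its norm satisfies
\[\|b_{h,n}\|=\|F(-i/n)\|\leq M:=\max_{|s|\leq\delta}\|F(is)\|<\infty.\]
The bound $M$ does not depend on $n$ once $1/n\leq\delta$. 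Choosing $n\geq\max\{1/\delta,M\}$ gives $\|b_{h,n}\|\leq n$, hence $h\in E_n$.

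The only delicate point is the convention for $\|b_{h,n}\|$ when the matrix is unbounded. With the finite-compression definition, closedness reduces to entrywise convergence, and the membership argument needs nothing beyond the two cited results. The uniformity in $n$ works because the same strip witnesses $s=-1/n$ for all large $n$, and the maximum $M$ in Lemma \ref{Lemma.BandStructure} is finite by continuity of $F$ on the compact segment $\{is\mid |s|\leq\delta\}$.
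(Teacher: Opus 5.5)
Your proof is correct and follows the paper's argument essentially step for step. Closedness comes from lower semicontinuity of the norm under entrywise convergence of finite compressions; you state this directly, while the paper phrases it by contradiction. For the covering, you evaluate at $z=-i/n$ via Proposition \ref{Prop.xy.coordinates.ana.extension}, use the uniform bound from Lemma \ref{Lemma.BandStructure}, and take $n\geq\max\{1/\delta,M\}$, exactly as the paper does.
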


\begin{proof}
Let $n\in\N$. If $(h_i)_i$ is in $E_n$ and it converges to some $h\in L$ with $\|b_{h,n}\|>n$, then there is a finite $I\subseteq X$ such that      $\|\chi_Ib_{h,n}\chi_I\|>n$. As $(h_i)_i$ converges pointwise to $h$ and $I$ is finite, there is $i$ large enough such that  $\|\chi_Ib_{h_i,n}\chi_I\|>n$; a contradiction since $h_i\in E_n$. So, $E_n$ is closed.

Let now $h\in L$ and let us notice  that $h\in E_n$ for some $n\in\N$. As $a$ is analytic on a strip for $h$, pick $F$ which witnesses that on a strip of width $\delta>0$. By Proposition \ref{Prop.xy.coordinates.ana.extension}, for all $n\geq 1/\delta$ and all $x,y\in X$, we have
\[\left\langle F\left(-\frac{i}{n}\right)\delta_y,\delta_x\right\rangle=e^{\frac{h(x)-h(y)}{n}}\langle a\delta_y,\delta_x\rangle=\langle b_{h,n}\delta_y,\delta_x\rangle.\]
Hence, by Lemma \ref{Lemma.BandStructure},
\[\|b_{h,n}\|=\left\|F\left(-\frac{i}{n}\right)\right\|\leq \sup_{s\in [-\delta,\delta]}\|F(is)\|.\]
Therefore, if $K$ is the supremum in the right-hand side above, then
\[\|b_{h,n}\|\leq n\]
for all $n\in\N$ with $n\geq \max\{1/\delta,K\}$. So, $h$ belongs to $E_n$ for any such $n$.
\end{proof}

  By Baire's category theorem, there is $n_0\in\N$ such that $E_{n_0}$ has nonempty interior. Fix $h_0\in L$, a finite $I\subseteq X$ containing $x_0$, and $\eta>0$  such that
\[\left\{h\in L\mid |h(x)-h_0(x)|<\eta\ \text{ for all }x\in I\right\}\subseteq E_{n_0}.\]
 Let $\Delta=\mathrm{diam}(I)$ and set
\[G=\{x\in X\mid d(x,I)< 3\Delta\}.\]
As $X$ is u.l.f., $G$ is finite as required. Let $c$ and $C$ be as in \eqref{Eq.C.c.bwb.pedro} for such $G$ and let us show that $\eps_a(r)\leq Ce^{-cr}$ for all $r\geq 0$.

Fix  $r>0$  and let   $A,B\subseteq X$ be nonempty such that   $d(A,B)\geq r$. We write
\begin{equation}\label{Eq.first.second.third.summand}\chi_Aa\chi_B=\chi_{A\setminus G}\,a\,\chi_{B\setminus G}+\chi_{A\cap G}\,a\,\chi_{B}+\chi_{A\setminus G}\,a\,\chi_{B\cap G}\end{equation}
and estimate the norm of each of the three terms in the right-hand side above. We start with the latter two terms.   For the second term, for each  $x\in A\cap G$,   let $h_x=-d(\,\cdot\,,x)$; so,     $h_x(x)=0$ and $h_x(y)\leq -r$  for all $y\in B$. Therefore,  applying Lemma \ref{Lemma.GapEstimate} for the sets $\{x\}$ and $B$, for $\theta=\theta_x$ and $h=h_x$, we get that \begin{equation*} \|\chi_{\{x\}}a\chi_B\|\leq K_xe^{-\theta_xr}.\end{equation*} Summing over $x\in A\cap G$, this implies that \begin{equation}\label{Eq.28.aug.26.2} \|\chi_{A\cap G}a\chi_B\| \leq  \sum_{x\in G}K_xe^{-\theta_xr}.\end{equation}
The third summand in  \eqref{Eq.first.second.third.summand} is  estimated analogously. Precisely, letting  $h_x=d(\cdot,x)$ for $x\in B\cap G$, a similar argument implies that  \begin{equation}\label{Eq.28.aug.26.3}\|\chi_{A}a\chi_{B\cap G}\| \leq  \sum_{x\in G}K_xe^{-\theta_xr}.\end{equation} 

We are left to estimate the first term in the right-hand side of \eqref{Eq.first.second.third.summand}. We start with a claim.

\begin{claim}\label{Claim.g.hg.cg}
   For every $1$-Lipschitz $g\colon X\to[0,\infty)$ there is $h_g\in E_{n_0}$ and $c_g\in\R$ such that
\begin{equation}\label{Eq.Gluing}
h_g(x)=\frac{g(x)}{2}+c_g\ \text{ for all }\ x\in X\setminus G .
\end{equation}
\end{claim}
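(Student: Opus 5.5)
We will construct $h_g$ by gluing: use $h_0$ on the finite set $I$ and $g/2$ shifted by a constant far away, then take a Lipschitz extension. Concretely, put $c_g=-g(x_0)/2$ and set $g'=\frac{g}{2}+c_g$. Then $g'$ is $\tfrac12$-Lipschitz and $g'(x_0)=0$. We want a $1$-Lipschitz $h_g$ with $h_g(x_0)=0$ and
\[h_g=h_0 \text{ on } I \quad\text{and}\quad h_g=g' \text{ on } X\setminus G.\]
Then $h_g\in L$. Since $h_g$ agrees with $h_0$ on $I$, it lies in the basic open set contained in $E_{n_0}$, and \eqref{Eq.Gluing} holds on $X\setminus G$.

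To get such an $h_g$, we check that the function $\phi$ defined on $I\cup(X\setminus G)$ by $\phi=h_0$ on $I$ and $\phi=g'$ on $X\setminus G$ is $1$-Lipschitz. The McShane extension
\[h_g(x)=\inf_{w\in I\cup (X\setminus G)}\big(\phi(w)+d(x,w)\big)\]
is then $1$-Lipschitz on $X$ and extends $\phi$. In particular $h_g(x_0)=h_0(x_0)=0$, since $x_0\in I$.

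Checking the Lipschitz condition splits into three cases.
\begin{itemize}
\item Pairs inside $I$: fine, since $h_0$ is $1$-Lipschitz.
\item Pairs inside $X\setminus G$: fine, since $g'$ is $\tfrac12$-Lipschitz.
\item Mixed pairs $u\in I$, $w\in X\setminus G$: this is the main point.
\end{itemize}
For the mixed case we have $d(w,I)\ge 3\Delta$. For any $v\in I$ we also have $d(u,v)\le\Delta$. Using $x_0\in I$, $h_0(x_0)=0=g'(x_0)$, and the Lipschitz bounds,
\[|h_0(u)-g'(w)|\le |h_0(u)|+|g'(w)|\le d(u,x_0)+\tfrac12 d(w,x_0)\le \Delta+\tfrac12\big(d(w,u)+\Delta\big).\]
Since $d(w,u)\ge 3\Delta$, this is at most $\tfrac{3}{2}\Delta+\tfrac12 d(u,w)\le d(u,w)$.

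The degenerate case $\Delta=0$ ($I=\{x_0\}$) is covered by the same estimate, and in that case $G=\emptyset$ would be problematic. So if $\Delta=0$ we first enlarge $I$ by one point (shrinking nothing else) so that $\Delta>0$ and $G\supseteq I$. Alternatively, with $\Delta=0$ the estimate reads $|g'(w)|\le\tfrac12 d(w,x_0)$, which still works; we only need $x_0\notin X\setminus G$, which may require replacing $G$ by $G\cup I$, harmless since $G$ only needs to be finite.

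The only real obstacle is this constant bookkeeping in the mixed case. The choices that make it work are the factor $\tfrac12$ in front of $g$ and the radius $3\Delta$ defining $G$.
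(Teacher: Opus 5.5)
Your proof is correct and is essentially the paper's argument. The paper glues $h_0|_I$ to $g/2+c_g$ off $G$ through the explicit $1$-Lipschitz function $\min\{\min_{z\in I}(h_0(z)+d(\cdot,z)),\,g/2+c_g\}$ with $c_g=\max_{z\in I}(h_0(z)-g(z)/2)$, whereas you normalize $c_g=-g(x_0)/2$ and invoke the McShane extension; the decisive estimate is the same in both, namely $\tfrac32\Delta+\tfrac12 d(u,w)\le d(u,w)$ whenever $d(u,w)\ge 3\Delta$.

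Your detour for $\Delta=0$ is unnecessary, and $G=\emptyset$ is not actually problematic. In that case $I=\{x_0\}$ and $X\setminus G=X$ overlap only at $x_0$, where $\phi$ is consistently defined because $h_0(x_0)=0=g'(x_0)$. So your main argument already covers this case without modifying $I$ or $G$.
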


\begin{proof}
Let \[c_g=\max_{z\in F}\left(h_0(z)-\frac{g(z)}{2}\right)\] and
\begin{equation}\label{Eq.Defi.hg}h_g=\min\Big\{\min_{z\in F}\big(h_0(z)+d(\cdot,z)\big),\ \frac{g}{2}+c_g\Big\}.\end{equation}
As $h_g$ is clearly $1$-Lipschitz, we only need to notice that $h_g$ is in $E_{n_0}$ and that it equals $g/2+c_g$ on $X\setminus G$. For the former, notice that  $h_g(x)=h_0(x)$ for all $x\in F$. Indeed, for a fixed $x\in F$, it is clear that the first term in the minimum in \eqref{Eq.Defi.hg} is at most $h_0(x)$. So, $h_g(x)\leq h_0(x)$.  On the other hand,
\begin{align*}\frac{g(x)}{2}+c_g&= \frac{g(x)}{2}+ \max_{z\in F}\left(h_0(z)-\frac{g(z)}{2}\right)\\
&\geq h_0(x)\end{align*}
and, as $h_0$ is $1$-Lipschitz, 
\[h_0(z)+d(x,z)\geq h_0(x)\]
for all $z\in F$.  So, $h_g(x)\geq h_0(x)$.

We now show that  $h_g$  equals $g/2+c_g$ on $X\setminus G$.
 For this fix  $x\in X\setminus G$  and let $z\in F$ be such that  $c_g=h_0(z)-g(z)/2$. Then, for  $y\in F$, using that $g$ and $h_0$ are $1$-Lipschitz, and that $d(x,y)\geq3\Delta$, we have
\begin{align*}
\frac{g(x)}{2}+c_g&=\frac{g(x)-g(z)}{2}+h_0(z)\\
&\leq \frac{d(x,z)}{2}+h_0(y)+\Delta\\
&\leq \frac{d(x,y)+\Delta}{2}+h_0(y)+\Delta\\
&\leq h_0(y)+d(x,y).
\end{align*}
This shows that the minimum in \eqref{Eq.Defi.hg} is  attained in its second element for $x$, i.e., $h_g(x)=g(x)/2+c_g$.
\end{proof}

For the first summand in \eqref{Eq.first.second.third.summand}, let $g=d(\,\cdot\,,B)$ and  pick  $h_g\in E_{n_0}$ as in Claim \ref{Claim.g.hg.cg}. Then,
\[h_g(x)=c_g\ \text{ for all }\ x\in B\setminus G\] and, as $g(x)\geq d(A,B)$ for all $x\in A$ and $d(A,B)\geq r$,   \[h_g(x)\geq r/2+c_g\ \text{ for all }\ x\in A\setminus G.\]
Applying  Lemma \ref{Lemma.GapEstimate}  with $\theta=1/n_0$ and $h=h_g$,    we obtain
\begin{equation}\label{Eq.28.aug.26.1}\|\chi_{A\setminus G} a\chi_{B\setminus G}\|\leq n_0e^{-\frac{r}{2n_0}}.\end{equation}

At last, by  \eqref{Eq.28.aug.26.2},   \eqref{Eq.28.aug.26.3}, and \eqref{Eq.28.aug.26.1}, we conclude that \[\|\chi_Aa\chi_B\|\leq Ce^{-cr},\] as desired.
\end{proof}

\subsection{$\mathrm{QL}_{\exp}(X)$ is  contained in  $\mathrm{AP}_{\mathrm{strip}}(X)$ for coarse disjoint unions of finite graphs}
 The next theorem is proved in this subsection.

\begin{theorem}\label{Thm.Led.In.Band}
Let $X$ be a u.l.f.\ metric space which is a coarse disjoint union of   finite connected graphs. Then
\[\mathrm{QL}_{\exp}(X)\subseteq \AP_{\mathrm{strip}}(X).\]
\end{theorem}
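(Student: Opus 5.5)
The plan is to show that every $a\in\cB(\ell_2(X))$ with $\eps_a(r)\leq Ce^{-cr}$ lies in $\AP_{\mathrm{strip}}(X)$. This suffices because $\AP_{\mathrm{strip}}(X)$ is norm closed. Write $X=\bigsqcup_nX_n$. The argument has two steps: a reduction to block-diagonal operators, followed by an explicit construction of the strip extension for block-diagonal operators with exponentially decaying modulus.

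\emph{Reduction to block-diagonal operators.} Fix $N$ and let $Y=\bigcup_{n\leq N}X_n$, which is finite. We split
\[a=\chi_Ya\chi_Y+\chi_Ya\chi_{X\setminus Y}+\chi_{X\setminus Y}a\chi_Y+\chi_{X\setminus Y}a\chi_{X\setminus Y}.\]
\begin{itemize}
\item Since $\chi_Y$ has finite rank, the first three terms are compact. They are norm limits of operators supported on a finite subset of $X\times X$, and such operators have finite propagation, so they lie in $\cstu(X)\subseteq\AP_{\mathrm{strip}}(X)$ by Theorem \ref{thm:roeentire}.
\item In the last term, separate the diagonal blocks $\chi_{X_n}a\chi_{X_n}$ ($n>N$) from the off-diagonal ones. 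Let $R_N=\inf\{d(X_n,X_m)\mid n\neq m,\ n,m>N\}$, which tends to $\infty$.
\item Averaging over a random partition $S\sqcup T$ of $\{n>N\}$ (independent fair coins), the off-diagonal part equals $2\,\mathbb E\big[\chi_{X_S}a\chi_{X_T}+\chi_{X_T}a\chi_{X_S}\big]$. Hence it has norm at most $4\eps_a(R_N)\to0$.
\end{itemize}
So it suffices to treat $b=\SOTh\sum_{n}\chi_{X_n}a\chi_{X_n}$. By the same compression argument as in the proof of Theorem \ref{Thm.QLthetaDoesNotContainP}, $\eps_b\leq\eps_a$.

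\emph{Strip analyticity for block-diagonal $b$.} Fix a coarse $h\colon X\to\R$ and put $L=\max\{\omega_h(1),1\}$. Since each $X_n$ is a connected graph with the path metric, we get $|h(x)-h(y)|\leq L\,d(x,y)$ for $x,y$ in the same $X_n$. Let $P_j=\chi_{h^{-1}([j,j+1))}$ and decompose
\[b=\sum_{\ell\in\Z}b_\ell,\qquad b_\ell=\SOTh\sum_jP_jbP_{j+\ell}.\]
The summands of $b_\ell$ are mutually orthogonal, so $\|b_\ell\|\leq\sup_j\|P_jbP_{j+\ell}\|$. Because $b$ is block diagonal, only pairs $x,y$ in a common $X_n$ contribute, and for these $d(x,y)\geq(|\ell|-1)/L$. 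Therefore $\|b_\ell\|\leq\min\{\|a\|,Ce^{-c(|\ell|-1)/L}\}$.

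Each $b_\ell$ has $d_h$-propagation at most $|\ell|+1$. By Proposition \ref{Prop.analitic.iff.finite.dh.prop} it has an entire extension $F_\ell$. Moreover, $F_\ell(t+is)=\sigma_{h,t}(e^{-sh}b_\ell e^{sh})$, and on each $P_j(\cdot)P_{j+\ell}$ piece the conjugation $e^{-sh}(\cdot)e^{sh}$ acts as a diagonal rescaling of norm at most $e^{|s|(|\ell|+1)}$. This gives
\[\|F_\ell(t+is)\|\leq e^{|s|(|\ell|+1)}\|b_\ell\|.\]
Hence $F=\sum_\ell F_\ell$ converges uniformly on the strip $|\mathrm{Im}\,z|\leq\delta$ for any $\delta<c/L$. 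The limit $F$ is continuous on the strip, holomorphic in its interior, and extends $\sigma_{h,\cdot}(b)$. Since $h$ was an arbitrary coarse map, $b$ is analytic on a strip for every coarse $h$, so $b\in\AP_{\mathrm{strip}}(X)$. Consequently $a\in\AP_{\mathrm{strip}}(X)$, and taking closures gives $\mathrm{QL}_{\exp}(X)\subseteq\AP_{\mathrm{strip}}(X)$.

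\emph{Main obstacle.} The key point is the second step, namely converting exponential decay of $\eps_b$ into exponential decay of the ``$h$-bands'' $b_\ell$. This conversion needs the distance lower bound $d\geq(|\ell|-1)/L$, which is exactly where connectedness of the graphs (coarse implies Lipschitz along edges) and block-diagonality are used. Across different blocks, $h$ need not control $d$ at all, which is why the reduction in the first step is needed.
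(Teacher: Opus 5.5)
Your proof is correct. The first step matches the paper, and the second step takes a genuinely different route.

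\emph{First step.} Your reduction to the block-diagonal $b$ is the paper's reduction. The paper simply asserts that $a-\SOTh\sum_n\chi_{X_n}a\chi_{X_n}$ is compact for quasi-local $a$. Your random-partition bound $4\eps_a(R_N)$ supplies a proof of that assertion.

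\emph{Second step, the paper's route.} The paper works through iterated commutators. Lemma \ref{Lemma.IterSlicing} slices $h$ into level sets of width $L$, much like your bands, and uses them to bound $\|\ad_h^k(b)\|\le 2(3L)^k\bigl(\|b\|+\sum_m m^k\eps_b(m)\bigr)$. Exponential decay turns this into a bound of the form $AB^kk!$. Lemma \ref{Lemma.BandFromMoments} then builds the extension as the power series $\sum_k\frac{(-s)^k}{k!}\ad_h^k(b)$, and Lemma \ref{Lemma.Duhamel} supplies continuity in $t$.

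\emph{Second step, your route.} You split $b=\sum_\ell b_\ell$ into $h$-bands of $d_h$-propagation at most $|\ell|+1$.
\begin{itemize}
\item By Proposition \ref{Prop.analitic.iff.finite.dh.prop}, each $b_\ell$ is entire of exponential type, with $\|F_\ell(t+is)\|\le e^{|s|(|\ell|+1)}\|b_\ell\|$. This is exactly \eqref{Eq.19.Aug.26.tarde.3}.
\item You need the general form of that proposition. Within a block, $b_\ell$ can still have unbounded $d$-propagation, so the simpler partial-translation argument from the remark does not apply.
\item Because $\|b_\ell\|$ decays exponentially in $\ell$, the M-test makes $\sum_\ell F_\ell$ converge uniformly on any strip of width $<c/L$.
\end{itemize}

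\emph{What each buys.} Your argument is shorter and bypasses three technical lemmas. It also gives a better explicit width: $c/L$, against the paper's $\min\{1,c\}/(3L)$. The paper's route produces quantitative bounds on $\ad_h^k$ as a by-product. These are the natural measure for the intermediate regularity classes (Lipschitz or differentiable orbits) mentioned in the introduction. For instance, Lemma \ref{Lemma.IterSlicing} shows that polynomial decay of $\eps_a$ already yields finitely many bounded iterated commutators. In addition, Lemma \ref{Lemma.BandFromMoments} only needs the moment hypothesis, not a band decomposition.
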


We need some preliminary results, all of them of a technical nature. For that, we first introduce some notation.  Let $X$ be a set, $a\in \cB(\ell_2(X))$, $h\colon X\to \R$, and $k\in  \N$. If the $X$-by-$X$ matrix
\[\left[(h(x)-h(y))^k\langle a\delta_y,\delta_x\rangle\right]_{x,y\in X}\]
induces a bounded operator on $\ell_2(X)$, we denote this operator by $\ad_h^k(a)$.
For $k=0$, we   let   $\ad^0_h(a)=a$. Notice that, for 
$k\in \N$, 
\begin{equation}\label{Eq.defi.adk.inductive}
\ad^k_h(a)=\ad_h\left(\ad^{k-1}_h(a)\right)=\left[h,\ad^{k-1}_h(a)\right].
\end{equation}

The next lemma is an integral formula which, in particular, shows that operators $a$ with $\ad_h(a)$ bounded have Lipschitz orbits.

\begin{lemma}\label{Lemma.Duhamel}
Let $X$ be a set,  $h\colon X\to\R$ be a map, and  $a\in\cB(\ell_2(X))$ be such that $\ad_h(a)$ is bounded. Then
\[\|\sigma_{h,t}(a)-a\|\leq |t|\,\|\ad_h(a)\|\ \text{ for all }\ t\in\R.\]
\end{lemma}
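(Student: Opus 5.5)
The plan is to work entrywise and then pass to operator norms, using the Schur-multiplier description of $\sigma_{h,t}$. The matrix of $\sigma_{h,t}(a)-a$ has entries $(e^{it(h(x)-h(y))}-1)\langle a\delta_y,\delta_x\rangle$. By the fundamental theorem of calculus,
\[
e^{it\lambda}-1=i\lambda\int_0^t e^{is\lambda}\,ds .
\]
Hence each entry equals
\[
\int_0^t i e^{is(h(x)-h(y))}(h(x)-h(y))\langle a\delta_y,\delta_x\rangle\, ds=\int_0^t i\langle \sigma_{h,s}(\ad_h(a))\delta_y,\delta_x\rangle\, ds .
\]
This suggests the Duhamel-type identity
\[
\sigma_{h,t}(a)-a=i\int_0^t \sigma_{h,s}(\ad_h(a))\,ds ,
\]
where the integral is a weak integral in the sense of the preliminaries.

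The first step is to check that this weak integral makes sense. The map $s\mapsto\langle\sigma_{h,s}(\ad_h(a))\xi,\zeta\rangle$ is continuous by Stone's theorem, as observed before \eqref{Eq.Theta.formula}. It is bounded by $\|\ad_h(a)\|\|\xi\|\|\zeta\|$ because each $\sigma_{h,s}$ is isometric. So the associated sesquilinear form is bounded by $|t|\|\ad_h(a)\|\|\xi\|\|\zeta\|$, and the weak integral defines an operator $T$ with $\|T\|\le |t|\,\|\ad_h(a)\|$.

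The second step is to identify $iT$ with $\sigma_{h,t}(a)-a$. It suffices to compare the matrix entries $\langle\cdot\,\delta_y,\delta_x\rangle$, since bounded operators are determined by them. The entries agree by the scalar computation above: set $\lambda=h(x)-h(y)$ and use $\langle \ad_h(a)\delta_y,\delta_x\rangle=\lambda\langle a\delta_y,\delta_x\rangle$. The desired bound $\|\sigma_{h,t}(a)-a\|\le|t|\|\ad_h(a)\|$ then follows from the norm estimate for $T$.

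There is no serious obstacle. The only points needing care are that $\ad_h(a)$ may involve an unbounded $h$, so we cannot literally differentiate $e^{ith}ae^{-ith}$ in norm, and that we avoid Bochner integrals. Both are handled by working with the weak integral and matrix entries, exactly as in the definition of $\Theta_{h,f}$. The case $t<0$ is identical, with $\int_0^t$ read as $-\int_t^0$.
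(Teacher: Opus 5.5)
Your proposal is correct and follows essentially the same route as the paper: an entrywise fundamental-theorem-of-calculus identity, then a bounded sesquilinear form whose associated operator has norm at most $|t|\,\|\ad_h(a)\|$ and agrees entrywise with $\sigma_{h,t}(a)-a$. The only cosmetic difference is that the paper defines the form on $c_{00}(X)\times c_{00}(X)$, whereas you take the weak integral directly on $\ell_2(X)$ using the continuity from Stone's theorem; this does not affect the argument.
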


\begin{proof}
First notice that, for each $x,y\in X$ and $t\in\R$, we have
\begin{align}\label{Eq.28.aug.26.dj.1}
 \langle (\sigma_{h,t}(a)-a)\delta_y,\delta_x\rangle &=(e^{it(h(x)-h(y))}-1)\langle a\delta_y,\delta_x\rangle\\
 &=\int_0^te^{is(h(x)-h(y))} i(h(x)-h(y)) ds\cdot\langle a\delta_y,\delta_x\rangle\notag\\
 &=\int_0^te^{is(h(x)-h(y))} \langle  i(h(x)-h(y))a\delta_y,\delta_x\rangle ds
 \notag\\
 &=\int_0^t\langle \sigma_{h,s}(i\ad_h(a))\delta_y,\delta_x\rangle ds.\notag
\end{align}
Moreover, by linearity, \eqref{Eq.28.aug.26.dj.1} holds if the vectors $\delta_y$ and $\delta_x$ are replaced by  arbitrary $\xi,\zeta\in c_{00}(X)$.

Inspired by \eqref{Eq.28.aug.26.dj.1}, consider  the sesquilinear form
\[(\xi,\zeta)\in c_{00}(X)\times c_{00}(X)\mapsto \int_0^t\langle \sigma_{h,s}(i\ad_h(a))\xi,\zeta\rangle ds\in \C.\]
It is immediate that this form is bounded  by $|t|\|\ad_h(a)\|$. So, there must be  $b\in \cB(\ell_2(X))$ such that
\[\langle b\xi,\zeta\rangle=\int_0^t\langle \sigma_{h,s}(i\ad_h(a))\xi,\zeta\rangle ds\]
for all $\xi,\zeta\in c_{00}(X)$. In particular, $\|b\|\leq|t|\|\ad_h(a)\|$. By \eqref{Eq.28.aug.26.dj.1},  \[b=\sigma_{h,t}(a)-a\] and the result follows.
\end{proof}

\begin{lemma}\label{Lemma.BandFromMoments}
Let $X$ be a set,  $h\colon X\to\R$ be a map, and  $a\in\cB(\ell_2(X))$ be such that $\ad_h^k(a)$ is bounded for all $k\in\N$. Assume      there are $A,B>0$ such that
\[\|\ad_h^k(a)\|\leq AB^kk!\ \text{ for all }\ k\geq0.\]
Then $a$ is analytic on  a strip for $\sigma_h$. Moreover,  this is the case for   any strip of width smaller than $ 1/B$.
\end{lemma}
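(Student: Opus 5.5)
Fix $\delta\in(0,1/B)$ and define the extension by the power series
\[
F(z)=\sum_{k=0}^\infty \frac{(iz)^k}{k!}\,\sigma_{h,\mathrm{Re}(z)}\bigl(\ad_h^k(a)\bigr)?
\]
This mixes the real and imaginary parts awkwardly. A cleaner choice is
\[
G(s)=\sum_{k=0}^\infty \frac{(-s)^k}{k!}\ad_h^k(a)\quad (|s|<1/B),
\qquad F(t+is)=\sigma_{h,t}(G(s)).
\]
The series for $G$ converges absolutely in norm, since $\|\ad_h^k(a)\|\,|s|^k/k!\le A(B|s|)^k$, which is summable for $|s|\le\delta<1/B$. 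This gives $\|G(s)\|\le A/(1-B\delta)$. Entrywise, $\langle G(s)\delta_y,\delta_x\rangle=e^{-s(h(x)-h(y))}\langle a\delta_y,\delta_x\rangle$, because norm convergence implies entrywise convergence. Hence $\langle F(z)\delta_y,\delta_x\rangle=e^{iz(h(x)-h(y))}\langle a\delta_y,\delta_x\rangle$, consistent with Proposition~\ref{Prop.xy.coordinates.ana.extension}, and $F(t)=\sigma_{h,t}(a)$ on $\R$.

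It remains to show that $F$ is continuous on the closed strip and holomorphic inside. I would write $F$ as a norm-convergent series centred at each real point. Fix $t_0$. For $|z-t_0|<1/B$, set
\[
F(z)=\sum_k \frac{(i(z-t_0))^k}{k!}\sigma_{h,t_0}\bigl(\ad_h^k(a)\bigr).
\]
This converges in norm because $\sigma_{h,t_0}$ is isometric. Its entries are $e^{it_0\Delta}\sum_k (i(z-t_0)\Delta)^k/k!\,a_{xy}=e^{iz\Delta}a_{xy}$ with $\Delta=h(x)-h(y)$, so it agrees with $F$. On the disc $|z-t_0|\le\delta'<1/B$, a norm-convergent power series with $\|\cdot\|$-valued coefficients is holomorphic in the interior and continuous up to the boundary.

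The closed strip $|\mathrm{Im} z|\le\delta$ is covered by such discs of a fixed radius $\rho\in(\delta,1/B)$ centred on real points. Each point of the strip lies in the interior of such a disc, which gives holomorphy in the open strip and continuity on the closed strip. Boundedness on the strip follows from the uniform bound $A/(1-B\rho)$.

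The main obstacle is that $t\mapsto\sigma_{h,t}(b)$ need not be norm continuous, so it is not obvious that $F(t+is)=\sigma_{h,t}(G(s))$ is even continuous in $t$. The local power-series representation sidesteps this, since it only uses $\sigma_{h,t_0}$ at a single fixed $t_0$. Alternatively, continuity in $t$ follows from Lemma~\ref{Lemma.Duhamel} applied to $G(s)$: its commutator $\ad_h(G(s))=\sum_k(-s)^k\ad_h^{k+1}(a)/k!$ is bounded by the same estimates, so $G(s)$ has Lipschitz orbits. I would include this as a remark for robustness. Uniqueness and the identification of entries are immediate from Proposition~\ref{Prop.xy.coordinates.ana.extension}.
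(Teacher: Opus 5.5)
Your proof is correct. Your $F(t+is)=\sigma_{h,t}(G(s))$ is exactly the paper's extension: your $G(s)$ is its $b_0(s)$. Where you differ is in how continuity and holomorphy are verified.

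The paper expands around an arbitrary interior point $z_0=t_0+is_0$. This forces it to do three things:
\begin{itemize}
\item introduce the auxiliary operators $b_j(s)=\sum_k\frac{(-s)^k}{k!}\ad_h^{j+k}(a)$;
\item prove $\|b_j(s)\|\le AB^jj!/(1-B|s|)^{j+1}$ by differentiating a geometric series, which yields a local series of radius $1/B-|s_0|$;
\item prove continuity on the closed strip separately, since the local series only gives holomorphy in the open strip. It does this via Lemma~\ref{Lemma.Duhamel}, using $\ad_h(b_0(s))=b_1(s)$ to get $\|F(z)-F(z')\|\le\|b_0(s)-b_0(s')\|+M|t-t'|$. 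This is precisely the alternative you list as a remark.
\end{itemize}

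You instead expand only around real centres. Discs of radius $\rho\in(\delta,1/B)$ around real points already cover the closed strip of width $\delta$, so this needs nothing beyond the hypothesis $\|\ad_h^k(a)\|\le AB^kk!$ and the fact that $\sigma_{h,t_0}$ is isometric. It shows $F$ is holomorphic on the whole open strip $|\mathrm{Im}(z)|<1/B$. Continuity on the closed $\delta$-strip then comes for free, and the Duhamel step is redundant.

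The paper's route is more computational, but along the way it gives an explicit uniform bound on $\ad_h(F(is))$, i.e.\ uniform Lipschitz control of $t\mapsto F(t+is)$. Yours is shorter and equally rigorous.

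Delete the abandoned first formula at the start of your write-up.
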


\begin{proof}
Fix $0<\delta<1/B$. For   integers $j,k\geq 0$, define $b_{j,k}\colon [-\delta,\delta]\to \cB(\ell_2(X))$   by letting
\[b_{j,k}(s)=\frac{(-s)^k}{k!}\ad_h^{j+k}(a)\]
and define $b_{j}\colon [-\delta,\delta]\to \cB(\ell_2(X))$  by
\[b_j(s)=\sum_{k=0}^\infty b_{j,k}(s).\]
Notice that the series above converges  absolutely for integers $j\geq 0 $. Precisely, we have that
\[\|b_{j,k}(s)\|\leq A\frac{(j+k)!}{k!}B^{j+k}|s|^k=A B^j \frac{(j+k)!}{k!} B^k|s|^k.\]
As $B|s|<1$,
\[\sum_{k=0}^\infty B^k|s|^k=\frac{1}{1-B|s|}\] and, differentiating $j$ times both sides above with respect to $B$, one can conclude that

\begin{equation}\label{Eq.bj.Bound}\|b_j(s)\|\leq
A\sum_{k=0}^\infty\frac{(j+k)!}{k!}B^{j+k}|s|^k=\frac{AB^jj!}{(1-B|s|)^{j+1}}<\infty.
\end{equation}
In particular, $b_j$  is well defined. Moreover,   this shows that the sum defining $b_j$ converges uniformly on $[-\delta,\delta]$. Therefore, as each   $b_{j,k}$ is continuous, so is $b_j$.

We now define $F$ on the strip  $\{z\in \C\mid |\mathrm{Im}(z)|\leq \delta\}$ by letting, for each $z=t+is$ with $|s|\leq\delta$,
\[F(z)=\sigma_{h,t}(b_0(s)).\]
Notice that if $s=0$, then $b_0(0)=a$ and hence    \[F(t)=\sigma_{h,t}(a)\ \text{   for all }\ t\in\R,\] i.e., $F$ extends $t\in \R\mapsto\sigma_{h,t}(a)\in \cB(\ell_2(X))$. We are left to show that $F$ is continuous in its domain and analytic in the interior of its domain.

We start showing that $F$ is continuous. First notice that, for all $x,y\in X$, we have
\begin{equation}\label{Eq.bj.entries}\langle b_j(s)\delta_y,\delta_x\rangle=(h(x)-h(y))^je^{-s(h(x)-h(y))}\langle a\delta_y,\delta_x\rangle.\end{equation}
In particular, $\ad_h(b_0(s))=b_1(s)$ and,   by \eqref{Eq.bj.Bound}, we have
\[M=\sup_{|s|\leq\delta}\|\ad_h(b_0(s))\|\leq \frac{AB}{(1-B\delta)^{2}}<\infty.\]
Let now     $z=t+is$ and $z'=t'+is'$ be in the domain of $F$. Then,  applying Lemma \ref{Lemma.Duhamel},
\begin{align*}\|F(z)-F(z')\|&\leq \|b_0(s)-b_0(s')\|+\|\sigma_{h,t-t'}(b_0(s'))-b_0(s')\|\\
&\leq \|b_0(s)-b_0(s')\|+M|t-t'| .
\end{align*}
As $b_0$ is continuous, this shows that so is $F$.

Let us show $F$ is holomorphic in $\{z\in \C\mid |\mathrm{Im}(z)|<\delta\}$. Fix $z_0=t_0+is_0$ with $|s_0|<\delta$ and let us show that $F$ can be written as a power series around $z_0$. By \eqref{Eq.bj.Bound}, the series
\[G(z)=\sum_{j=0}^\infty \frac{i^j(z-z_0)^j}{j!}\,\sigma_{h,t_0}\big(b_j(s_0)\big)\]
converges absolutely if $|z-z_0|<1/B-|s_0|$. So, $G$ is holomorphic in this region. Moreover, it follows from  \eqref{Eq.bj.entries} that the matrix entries of $G(z)$ are
\[\sum_{j=0}^\infty\frac{(i(z-z_0)(h(x)-h(y)))^j}{j!}\,e^{it_0(h(x)-h(y))}e^{-s_0(h(x)-h(y))}\langle a\delta_y,\delta_x\rangle.\]
Since the latter equals $e^{iz(h(x)-h(y))}\langle a\delta_y,\delta_x\rangle$ and, by the formula of $F$, this equals $\langle F(z)\delta_y,\delta_x\rangle$, we conclude that
\[F(z)=G(z)\ \text{ for all }\ z\in \left\{z'\in \C \mid |\mathrm{Im}(z')|<\delta \ \text{ and }\ |z'-z_0|<\frac{1}{B}-|s_0|\right\}.\] This shows that  $F$ is holomorphic at $z_0$.
\end{proof}

\begin{lemma}\label{Lemma.IterSlicing}
Let $X$ be a connected finite  graph endowed with the shortest path metric,    $h\colon X\to\R$ be a map,   and $a\in\cB(\ell_2(X))$. Then, for all $k\geq1$,
\[\|\ad_h^k(a)\|\leq 2\,(3\Lip(h))^k\left(\|a\|+\sum_{m\geq2}m^k\,\eps_a(m)\right).\]
\end{lemma}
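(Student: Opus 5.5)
Without loss of generality $\ell=\Lip(h)>0$. The plan is to discretize $h$ into level sets and decompose $a$ into pieces according to the distance between levels. Because $X$ is a connected graph with the path metric, $h(x)-h(y)$ is controlled by $d(x,y)$. The quasi-locality modulus then controls each piece, and the $m^k$ weight comes from bounding $(h(x)-h(y))^k$ on the piece at level distance about $m$.

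\textbf{Decomposition.} Let $L_j=h^{-1}([j\ell,(j+1)\ell))$ for $j\in\Z$ and $P_j=\chi_{L_j}$, so that $\sum_j P_j=1$. For $m\in\Z$ put
\[
a_m=\sum_j P_{j+m}\,a\,P_j,
\]
a finite sum since $X$ is finite. Then $a=\sum_m a_m$ and $\ad_h^k(a)=\sum_m \ad_h^k(a_m)$. Each $a_m$ is block-diagonal with respect to the pairing of $L_j$ with $L_{j+m}$. For fixed $m\neq 0$, the pairs $(L_{j+m},L_j)$ with $j$ ranging over a residue class have pairwise disjoint rows and columns, so the norm of such a sum is the supremum of the block norms.

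\textbf{Estimating each piece.} For $x\in L_{j+m}$ and $y\in L_j$ we have $|h(x)-h(y)|\le(|m|+1)\ell$ and $d(x,y)\ge |h(x)-h(y)|/\ell\ge |m|-1$. Hence $d(L_{j+m},L_j)\ge |m|-1$ and $\|P_{j+m}aP_j\|\le\eps_a(|m|-1)$.

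For the factor $(h(x)-h(y))^k$, refine each block using a finer Schur multiplier. Write
\[
h(x)-h(y)=\ell m+\bigl(h(x)-(j+m)\ell\bigr)-\bigl(h(y)-j\ell\bigr)=\ell m+u(x)-u(y),
\]
where $0\le u<\ell$. Then $\ad_h^k(P_{j+m}aP_j)=\sum_{i}\binom{k}{i}(\ell m)^{k-i}\ad_u^{i}(\cdot)$. Multiplication by the diagonal $u$ has norm at most $\ell$, so $\|\ad_u^i(b)\|\le(2\ell)^i\|b\|$. Therefore
\[
\|\ad_h^k(P_{j+m}aP_j)\|\le(\ell|m|+2\ell)^k\|P_{j+m}aP_j\|.
\]
The same bound applies to the whole $a_m$, since $u$ is a global diagonal operator and $\ad_h^k(a_m)=\sum_i\binom{k}{i}(\ell m)^{k-i}\ad_u^i(a_m)$. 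For $m\neq 0$, splitting by residue class gives $\|a_m\|\le 2\eps_a(|m|-1)$ for $|m|\ge2$, and $\|a_m\|\le 2\|a\|$ for $|m|\le1$. (The factor $2$ may need adjusting; the case $m=0$ is a genuine block diagonal.)

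\textbf{Summing up.} This yields
\[
\|\ad_h^k(a)\|\le\sum_{|m|\le1}(3\ell)^k\cdot 2\|a\|+\sum_{|m|\ge2}(\ell(|m|+2))^k\cdot 2\eps_a(|m|-1).
\]
Reindex with $m'=|m|-1\ge1$. Using $|m|+2=m'+3\le 3m'$ for $m'\ge2$, together with the monotonicity of $\eps_a$, the tail becomes $\lesssim(3\ell)^k\sum_{m'\ge2}m'^k\eps_a(m')$. The term $m'=1$ merges into the $\|a\|$ term, since $\eps_a\le\|a\|$.

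\textbf{Main obstacle.} The hard part is matching the exact constants $2$ and $3^k$ of the statement. Bookkeeping of the boundary terms ($|m|\le 2$, both signs of $m$, the factor from the residue classes) may force a different bin width, for example $\ell/2$, or a tighter bound on $\|\ad_u^i\|$. I would first try bins of width $\ell$ together with the estimate
\[
|h(x)-h(y)|\le\ell\,d(x,y)\le\ell\,(\text{distance realized}),
\]
and count directly. If that fails, I would instead decompose by the graph distance $m\le d(x,y)<m+1$, using $\chi$-slicing along balls (the ``iterated slicing'' of the lemma's name). On each piece $|h(x)-h(y)|^k\le(\ell(m+1))^k$. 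I would then handle the Schur multiplier $(h(x)-h(y))^k$ by the same binomial expansion with a bounded diagonal remainder.
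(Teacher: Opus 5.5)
Your route is the paper's: your $L_j$, $u$ and $a_m$ are its $E_j$, $b=h-g$ and $T_m(a)$, where $g=h-u$ equals $j\ell$ on $L_j$. The binomial expansion in $\ad_g$ and $\ad_u$ is also the paper's tool. As written, however, your estimates do not give the stated inequality. In particular, the step where the $m'=1$ term ``merges into the $\|a\|$ term'' fails. With your bound $d(L_{j+m},L_j)\ge|m|-1$, the pieces with $|m|=2$ contribute $(4\ell)^k\eps_a(1)$, and $(4\ell)^k\|a\|$ is not dominated by any constant times $(3\ell)^k\|a\|$ uniformly in $k$.

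The missing observation is where the graph hypothesis actually enters; Lipschitz control of $h$ by $d$ holds in any metric space. For $x\in L_{j+m}$ and $y\in L_j$ the inequality $|h(x)-h(y)|>(|m|-1)\ell$ is strict, so $d(x,y)>|m|-1$. Since the path metric is integer-valued, $d(L_{j+m},L_j)\ge|m|$. Hence $\|a_m\|\le\eps_a(|m|)$ for $|m|\ge2$, and no reindexing is needed.

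The residue-class factor $2$ is also spurious. For fixed $m$ the map $j\mapsto j+m$ is injective, so all blocks $P_{j+m}aP_j$, $j\in\Z$, already have pairwise disjoint row sets and pairwise disjoint column sets. Therefore $\|a_m\|=\max_j\|P_{j+m}aP_j\|$.

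Even with these fixes, splitting into the $a_m$ \emph{before} expanding loses at $m=0$. You get $(2\ell)^k\|a_0\|+(3\ell)^k(\|a_1\|+\|a_{-1}\|)$, which can be as large as $(2^k+2\cdot 3^k)\ell^k\|a\|$. This exceeds $2(3\ell)^k\|a\|$, for instance when $a$ has propagation $1$, where the tail vanishes.

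The paper expands first: $\ad_h^k(a)=\sum_j\binom{k}{j}\ad_g^j(c_j)$ with $c_j=\ad_u^{k-j}(a)$. It then splits only $\ad_g^j(c_j)=\sum_m(m\ell)^jT_m(c_j)$.
\begin{itemize}
\item The term $j=0$ is just $\|c_0\|\le(2\ell)^k\|a\|$.
\item For $j\ge1$, the $m=0$ block disappears.
\item The blocks $m=\pm1$ contribute at most $2\ell^j\|c_j\|$.
\item The blocks $|m|\ge2$ contribute at most $2\ell^j\sum_{m\ge2}m^j\eps_{c_j}(m)$.
\end{itemize}
Now combine this with $\|c_j\|\le(2\ell)^{k-j}\|a\|$ and $\eps_{c_j}\le(2\ell)^{k-j}\eps_a$; the latter follows from $\chi_A\ad_u(c)\chi_B=u\chi_Ac\chi_B-\chi_Ac\chi_Bu$. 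Together with $m^j\le m^k$ and $\sum_j\binom{k}{j}2^{k-j}=3^k$, this gives exactly $2(3\ell)^k$.

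Alternatively, you can keep your order and replace $u$ by $u-\ell/2$. This does not change $\ad_u$, but now $\|\ad_u(c)\|\le\ell\|c\|$. Your per-piece bound becomes $((|m|+1)\ell)^k\|a_m\|$, and you conclude with $1+2^{k+1}\le 2\cdot 3^k$ and $(m+1)^k\le(3m)^k$.

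Your fallback of slicing by exact graph distance would not help. The part of $a$ supported on $\{d(x,y)=m\}$ is not controlled by $\eps_a(m)$ without degree-dependent counting constants. For the paper's application any bound of the form $AB^k(\|a\|+\sum_m m^k\eps_a(m))$ would suffice, but the lemma asserts the specific constants $2$ and $3^k$.
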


\begin{proof}
To simplify notation, let $L=\Lip(h)$. If $L=0$, the result is immediate, so assume $L>0$. For each $j\in\Z$, write \[E_j=h^{-1}([jL,(j+1)L)).\]    Let \[g=\sum_{j\in \Z} jL\chi_{E_j}\ \text{  and }\  b=h-g.\] So,   $0\leq b<L$. Notice that
\begin{equation}\label{Eq.BinomialSlicing}
\ad_h^k=(\ad_g+\ad_b)^k=\sum_{j=0}^k\binom kj \ad_g^j\ad_b^{k-j}
\end{equation}
(see \eqref{Eq.defi.adk.inductive}).
We will now estimate the norms of  $\ad_b^{k-j}(a)$ and of $\ad_g^j(a_j)$ where $a_j=\ad_b^{k-j}(a)$.

As $b$ is bounded, we can identify $b$ with a bounded operator in the diagonal $\ell_\infty(X)$ of $\cB(\ell_2(X))$. In particular,    $\ad_b(c)=[b,c]$ for all $c\in \cB(\ell_2(X))$. Moreover, as $b$ is bounded by $L$, we have  \begin{equation}\label{Eq.BinomialSlicing.bounds.1}\|\ad_b(c)\|\leq 2L\|c\|\ \text{  for all }\ c\in\cB(\ell_2(X)).\end{equation}
For $A,B\subseteq X$, we have \[ \chi_A\ad_b(c)\chi_B=b\chi_A\cdot \chi_Ac\chi_B-\chi_Ac\chi_B\cdot b\chi_B\] which implies that   \begin{equation}\label{Eq.BinomialSlicing.bounds.2}\eps_{\ad_b(c)}\leq2L \eps_c \ \text{  for all }\ c\in\cB(\ell_2(X)).\end{equation}
Hence, for $k\geq j$, letting $a_j=\ad_b^{k-j}(a)$, \eqref{Eq.BinomialSlicing.bounds.1} and \eqref{Eq.BinomialSlicing.bounds.2} give
\begin{equation}\label{Eq.AdbBounds}
\|a_j\|\leq(2L)^{k-j}\|a\|\ \text{ and }\ \eps_{a_j}\leq(2L)^{k-j}\eps_a.
\end{equation}

Given $c\in \cB(\ell_2(X))$ and   $m\in\Z$, let
\[T_m(c)=\sum_{j\in\Z}\chi_{E_{j+m}}c\,\chi_{E_{j}}.\]
As $X$ is finite, it is clear that
\[c=\sum_{m\in\Z} T_m(c).\]
Hence, since $g$ equals $mL$ in each $E_m$, we have \begin{equation}\label{Eq.AdbBounds.223.0}\ad^j_g(c)=\sum_{m\in\Z}(  mL)^jT_m(c).\end{equation}
We estimate the quantity above depending on $m$. Firstly, for an arbitrary $m\in\Z$,  the summands of $T_m(c)$ have pairwise orthogonal domains and ranges, so \begin{equation}\label{Eq.AdbBounds.223}\|T_m(c)\|=\max_{j\in\Z}\|\chi_{E_{j+m}}c\chi_{E_{j}}\|\leq\|c\|.\end{equation}
Now, if  $|m|\geq2$,  then $|h(x)-h(y)|>(|m|-1)L$ for all  $x\in E_{j+m}$ and $y\in E_{j}$. Since $h$ is $L$-Lipschitz and distances in $X$ are integers,  this implies that $d(x,y)\geq|m|$ and so   $d(E_{j+m},E_{j})\geq|m|$. Therefore, the equality in \eqref{Eq.AdbBounds.223} gives \begin{equation}\label{Eq.AdbBounds.223.1}\|T_m(c)\|\leq\eps_c(|m|).\end{equation}

Equations  \eqref{Eq.AdbBounds.223.0}, \eqref{Eq.AdbBounds.223}, and \eqref{Eq.AdbBounds.223.1} together give
\[\|\ad_g^j(a_j)\|\leq 2L^j\Big(\|a_j\|+\sum_{m\geq2}m^j\eps_{a_j}(m)\Big).\]
Hence, by \eqref{Eq.AdbBounds} and since $j\leq k $, we have
\[\|\ad_g^j(a_j)\|\leq 2\cdot 2^{k-j}L^k\Big(\|a\|+\sum_{m\geq2}m^k\eps_{a}(m)\Big).\]
As $\sum_j\binom kj2^{k-j}=3^k$, the result follows from \eqref{Eq.BinomialSlicing}.
\end{proof}

\begin{proof}[Proof of Theorem \ref{Thm.Led.In.Band}]
As $X$ is a u.l.f.\ metric space which is a coarse disjoint union of finite connected graphs, write  $X=\bigsqcup_nX_n$ where each $X_n$ is a finite connected graph.
Since $\AP_{\mathrm{strip}}(X)$ is closed, it is enough to show that $ \AP_{\mathrm{strip}}(X)$ contains all contractions in $\cB(\ell_2(X))$ whose quasi-locality modulus decays exponentially. Let $a\in \cB(\ell_2(X))$ be such a contraction and fix $c,C>0$ such that
\[\eps_a(r)\leq Ce^{-cr}\ \text{ for all }r>0.\]
 As   $a\in \cstql(X)$,   $a-\SOTh\sum_n\chi_{X_n}a\chi_{X_n}$ is compact and, in particular, it belongs to $ \AP_{\mathrm{strip}}(X)$. Therefore, we can assume without loss of generality that
\[a=\SOTh\sum_n\chi_{X_n}a\chi_{X_n}.\]

Fix a coarse $h\colon X\to\R$ and let us show that $a$ is analytic on a strip for $\sigma_h$. Let $L=\max\{1,\omega_h(1)\}$. Notice that, as $a$ is in $\prod_n\cB(\ell_2(X_n))$,  \[\ad_h^k(a)=\SOTh\sum_{n\in\N}  \ad_{h|_{X_n}}^k(\chi_{X_n}a\chi_{X_n}).\] Therefore, since
\[\eps_{\chi_{X_n}a\chi_{X_n}}\leq\eps_a\ \text{ for all }\ n\in\N,\]
Lemma \ref{Lemma.IterSlicing} applied to each block gives that
\begin{align}\label{Eq.lunch.break}\|\ad_h^k(a)\| &=\sup_n\|\ad^k_{h|_{X_n}}(\chi_{X_n}a\chi_{X_n})\|\\
&\leq 2(3L)^k\Big(1+C\sum_{m\geq2}m^ke^{-cm}\Big)\notag\end{align}
for all $k\in\N$. As for  $t\in [m-1,m]$ one has $m\leq t+1$ and $  e^{-cm}\leq e^{-ct}$, we have
 \[\sum_{m\geq2}m^ke^{-cm}\leq\int_1^\infty(t+1)^ke^{-ct}dt \leq e^{c}\frac{k!}{c^{k+1}}.\]
Together with \eqref{Eq.lunch.break}, this  gives
\begin{align*}\|\ad_h^k(a)\| & \leq 2(3L)^k\Big(1+Ce^{c}\,\frac{k!}{c^{k+1}}\Big)\\
 &\leq  2\Big(1+ \frac{Ce^{c}}{c}\Big)\left(\frac{3L}{\min\{1,c\}}\right)^k k! \notag\end{align*}
 for all $k\geq 0$.
 By Lemma \ref{Lemma.BandFromMoments}, $a$ is analytic on a strip for   $\sigma_h$.
\end{proof}

\begin{proof}
    [Proof of Theorem \ref{thmE}]
By Theorem  \ref{Thm.Band.In.Led}, $\mathrm{AP}_{\mathrm{strip}}(X)\subseteq \mathrm{QL}_{\exp}(X)$ and by Theorem \ref{Thm.Led.In.Band} $\mathrm{QL}_{\exp}(X)\subseteq \mathrm{AP}_{\mathrm{strip}}(X)$ for u.l.f.\ metric spaces which are coarse disjoint unions of connected  finite graphs. So, these algebras are equal for such $X$'s.
\end{proof}
 
\begin{proof}[Proof of Theorem \ref{thmC}]
    By Theorem \ref{thmD}, we have 
    \[\cstu(X)\subsetneq \mathrm{QL}_{\exp}(X)\subsetneq \cstql(X)\]
    and, by Theorem \ref{thmE}, $\mathrm{AP}_{\mathrm{strip}}(X)=\mathrm{QL}_{\exp}(X)$. Therefore,
    \[\cstu(X)\subsetneq \mathrm{AP}_{\mathrm{strip}}(X)\subsetneq \cstql(X)\]
    and we are done.
\end{proof}

\appendix

\section{A consequence of measure concentration} \label{Appendix}
This appendix contains the proofs of Lemmas  \ref{LemmaGoodSubspacesQuarterPower} and \ref{LemmaGoodSubspacesQuarterPower.LARGEDIM}. Earlier drafts of this manuscript had the detailed arguments for these lemmas as corollaries of the concentration of measure phenomena. This is however not necessary any more since \cite{LiZhangZhu2026} provided the precise result needed for us. Precisely:

\begin{lemma}\emph{(}\cite[Proposition 6.3]{LiZhangZhu2026}\emph{)}. There is a constant $C>0$ such that, for all $n\in\N$ and all $k\in\{1,\dots,n\}$,    there is a subspace $W\subseteq\C^n$ with $\dim_{\C}(W)=k$ such that
\begin{equation}\label{eq:framebound.complex}
\|\chi_Ap_W\|\leq C\sqrt{\frac kn+\frac{|A|}{n}\log\left(\frac{en}{|A|}\right)}
\end{equation}
for all nonempty $A\subseteq\{1,\dots,n\}$.\label{cor.lem:frame}
\end{lemma}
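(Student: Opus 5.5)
This lemma is cited from \cite[Proposition 6.3]{LiZhangZhu2026}, so in the paper it is simply invoked. If I had to prove it, I would use the probabilistic method: take $W$ to be a uniformly random $k$-dimensional subspace of $\C^n$, i.e.\ the range of $Up_0$ where $U$ is Haar-distributed in $\mathrm U(n)$ and $p_0$ projects onto the first $k$ coordinates. I would show that, with positive probability, \eqref{eq:framebound.complex} holds simultaneously for every nonempty $A$.

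\textbf{Step 1: reduce to a single vector.} Write $\|\chi_Ap_W\|=\|p_W\chi_A\|=\sup\{\|p_W\xi\|\mid \xi\in\ell_2(A),\ \|\xi\|=1\}$. For a fixed unit vector $\xi$, the quantity $\|p_W\xi\|^2$ has the distribution of the squared norm of the first $k$ coordinates of a uniform point on $S^{2n-1}$. Its mean is $k/n$. The map $\xi\mapsto\|p_W\xi\|$ is $1$-Lipschitz on the sphere. Lévy concentration (or an explicit Beta/chi-square tail bound) then gives
\[
\mathbb P\Bigl(\|p_W\xi\|\ge\sqrt{k/n}+t\Bigr)\le 2e^{-cnt^2}.
\]

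\textbf{Step 2: union bound over $A$.} Fix a size $a=|A|$. For each $A$ of that size, take a $1/2$-net of the unit sphere of $\ell_2(A)$; it has at most $5^{2a}$ points. The standard net argument then gives $\|p_W\chi_A\|\le 2\max_{\text{net}}\|p_W\xi\|$. There are $\binom na\le(en/a)^a$ sets of size $a$. Choose $t^2=K\frac an\log\frac{en}{a}$ with $K$ a large absolute constant. The failure probability for size $a$ is then at most
\[
(en/a)^a\,25^{a}\,2e^{-cK a\log(en/a)},
\]
which is at most $2\cdot 2^{-a}$ once $K$ is large.

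\textbf{Step 3: combine.} Summing over $a=1,\dots,n$ bounds the total failure probability by a quantity below $1$, after adjusting constants; for instance, use $t^2$ with an extra additive $\tfrac{\log 4}{cn}$ term, absorbed into $\tfrac an\log\tfrac{en}{a}$ since $a\ge1$. So some $W$ satisfies, for all $A$,
\[
\|\chi_Ap_W\|\le 2\Bigl(\sqrt{k/n}+\sqrt{K\tfrac{|A|}{n}\log\tfrac{en}{|A|}}\Bigr)\le C\sqrt{\tfrac kn+\tfrac{|A|}n\log\tfrac{en}{|A|}}.
\]

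\textbf{Main obstacle.} The delicate point is the concentration in Step 1 with the correct variance scale $1/n$ uniformly in $k$. Lévy's inequality on $S^{2n-1}$, applied to the $1$-Lipschitz function $\xi\mapsto\|p_W\xi\|$ (equivalently, fixing $\xi$ and randomizing $W$), gives exactly the $e^{-cnt^2}$ tail. The gap between the median and $\sqrt{k/n}$ is $O(1/\sqrt n)$, which is absorbed into the constant.
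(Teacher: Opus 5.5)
Your argument is correct. The paper gives no proof of its own: it imports the statement from \cite{LiZhangZhu2026} and describes it only as a consequence of concentration of measure on high-dimensional spheres, and your random-subspace, $\tfrac12$-net and union-bound argument is a valid self-contained proof along exactly those lines. One piece of bookkeeping needs the fix you already indicate, since $\sum_{a\ge1}2\cdot 2^{-a}$ is not below $1$: either enlarge $K$ so that the size-$a$ failure probability is at most $2\cdot 4^{-a}$, or add the $\tfrac{\log 4}{cn}$ term, which is absorbed because $\tfrac an\log\tfrac{en}{a}\ge\tfrac1n$ for $a\ge1$.
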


\begin{proof}
    [Proof of Lemma \ref{LemmaGoodSubspacesQuarterPower}]
 Let $k=\lceil n^{1/4}\rceil$. By Lemma \ref{cor.lem:frame}, there is a universal $C>0$ and  a subspace $W\subseteq \C^n$ of (complex) dimension $k$ such that \begin{equation}\label{eq:framebound.222}
\|\chi_Ap_W\|\leq C\sqrt{\frac kn+\frac{|A|}{n}\log\left(\frac{en}{|A|}\right)}
\end{equation}
for all nonempty $A\subseteq\{1,\dots,n\}$.

We only need to notice that $W$ has the required properties; this will follow from  some very simple estimates. For that, fix $\delta\in[1/k,1/2]$ and nonempty $A\subseteq\{1,\dots,n\}$ with $|A|\leq n\delta$. As $1/k\leq \delta\leq 1/2$ and $k\leq 2n^{1/4}$, we have that $\log(1/\delta)\geq\log2$ and $2\delta\geq1/n^{1/4}$. Hence,  
\[
\frac kn\leq\frac{2n^{1/4}}{n}=\frac{2}{n^{1/2}}\cdot\frac{1}{n^{1/4}}\leq\frac{4\delta}{n^{1/2}}\leq4\delta\leq\frac{4}{\log2}\,\delta\log\Big(\frac1\delta\Big).
\]
Moreover, as $|A|/n\leq\delta\leq1$ and as the map $t\mapsto t\log(e/t)$ is increasing on $(0,1]$,
\[
\frac{|A|}{n}\log\Big(\frac{en}{|A|}\Big)\leq\delta\log\Big(\frac e\delta\Big)=\delta+\delta\log\Big(\frac1\delta\Big)\leq\Big(1+\frac1{\log2}\Big)\delta\log\Big(\frac1\delta\Big).
\]
Since $\frac4{\log2}+1+\frac1{\log2}\leq9$, \eqref{eq:framebound.222} implies that  \[\|\chi_Ap_W\|\leq C\sqrt{9\,\delta\log\left(\frac{1}{\delta}\right)}=3C\sqrt{\delta\log\left(\frac{1}{\delta}\right)}\]
and we are done.
\end{proof}

\begin{proof}[Proof of Lemma \ref{LemmaGoodSubspacesQuarterPower.LARGEDIM}]
Let $W\subseteq \C^n$ be the vector space of dimension  $k=\lfloor\tfrac{n}{(\log{n})^\alpha}\rfloor$ given  by    Lemma \ref{cor.lem:frame}. As $k/n\leq 1/(\log(n))^\alpha$, the result follows.
\end{proof}

\begin{AIusage}
This paper started as a natural continuation of \cite{BragaExel2023,BragaBussExel2026Jussieu}   in  2024; these papers led the author to initiate the program of  characterizing the  uniform Roe and the quasi-local algebra dynamically as well as to  find natural dynamical algebras strictly in between them for the cases in which these algebras are known not to coincide. After the release of Claude Fable 5.0, the paper developed through     extended interactions with this model. This includes interactions on  the proofs of all the main results in the paper.  The organization and exposition of the paper are the author's responsibility only. Moreover, all work done in collaboration with Claude Fable 5.0 was independently checked and reworked by the author,   who also takes full responsibility for the correctness, originality, and integrity of the work. 
\end{AIusage}

\begin{leanformalization}
Formalizations of Theorems A--E and their supporting results
were developed using GPT-6 Astra. The resulting proofs were
checked by the Lean 4 kernel (version 4.33.1), using Mathlib
v4.33.1 and the additional developments included in the repository.
The source files, a correspondence between the manuscript
and the formal statements, and instructions for reproducing
the verification are available at
\begin{center}
\url{https://github.com/demendoncabraga/dynamical-cstar-algebras}.
\end{center}
\end{leanformalization}

\begin{acknowledgments}
    The author wishes to thank Alcides Buss and Ruy Exel for conversations about this project. The author is specially thankful to Alcides Buss for asking  during a visit to IMPA in January of 2024 if the continuity points of a diagonal flow given by an arbitrary (not necessarily coarse) map in a given uniform Roe algebra could be seen as the uniform Roe algebra of a different coarse structure on the space (see Theorem  \ref{Thm.qla.h.Points.Cont.Substructure}).  The author is deeply grateful to Cynthia Bortolotto and João Pedro Ramos for sharing their Lean formalization repository, without which the Lean formalization of  this work would not have been possible.
\end{acknowledgments}

 \bibliographystyle{amsalpha}
 \bibliography{bibliography}

\end{document}